\numberwithin{equation}{section}
\newtheorem{thm}{Theorem}[section]
\newtheorem{lem}[thm]{Lemma}
\newtheorem{definition}[thm]{Definition}
\newtheorem{prop}[thm]{Proposition}
\newtheorem{rem}[thm]{Remark}
\newtheorem{Hypothese}{Assumption}
\newenvironment{hyp}{\begin{Hypothese}}{\end{Hypothese}}
\theoremstyle{definition}
\newcommand{\cA}{\ensuremath{\mathcal A}}
\newcommand{\cC}{\ensuremath{\mathcal C}}
\newcommand{\cF}{\ensuremath{\mathcal F}}
\newcommand{\cK}{\ensuremath{\mathcal K}}
\newcommand{\cM}{\ensuremath{\mathcal M}}
\newcommand{\cQ}{\ensuremath{\mathcal Q}}
\newcommand{\cS}{\ensuremath{\mathcal S}}
\newcommand{\frm}{\ensuremath{\mathfrak m}}
\newcommand{\bbH}{{\ensuremath{\mathbb H}} }
\newcommand{\bbL}{{\ensuremath{\mathbb L}} }
\newcommand{\bbR}{{\ensuremath{\mathbb R}} }
\newcommand{\ga}{\alpha}
\newcommand{\gb}{\beta}
\newcommand{\gga}{\gamma}            
\newcommand{\gd}{\delta}
\newcommand{\gep}{\varepsilon}       
\newcommand{\gp}{\varphi}
\newcommand{\gr}{\rho}
\newcommand{\gD}{\Delta}
\newcommand{\gk}{\kappa}
\newcommand{\gs}{\sigma}
\newcommand{\p}{\mathbb{P}}
\newcommand{\Wtn}{{Q_t^N}}
\newcommand{\etats}{\{0,1\}^{2N+1}}
\newcommand{\rond}[1]{\mathcal{#1}}
\newcommand{\J}{J_{G,H,\epsilon}}
\newcommand{\bra}{\langle}
\newcommand{\ket}{\rangle}
\newcommand{\dQ}{{\partial_t Q}}
\newcommand{\dK}{{\partial_t K}}
\newcommand{\reg}{{\text{reg}}}
\newcommand{\ggab}{\gb}
\title[Large deviations of the empirical currents]{Large deviations of the empirical currents for a boundary driven reaction diffusion model}
\date{\today}
\author{Thierry BODINEAU}
\address{Ecole Normale Sup\'erieure, DMA, 45 rue d'Ulm
75230 Paris cedex 05, France}
\author{Maxime LAGOUGE}
\thanks{We are deeply indebted to L. Bertini, B. Derrida, D. Hilhorst, C. landim, J. Lebowitz for many enlightening discussions and useful suggestions. TB acknowledges the support of the French Ministry of Education through the ANR BLAN07-2184264 grant.
This work was partially supported by the NSF Grant DMR-044-2066 and AFOSR Grant AF-FA9550-04 during a stay at Rutgers University and by  the {\it Florence Gould Foundation Endowment} during a stay at the Institute for Advanced Study.}
\keywords{Large deviations, Interacting particle systems}
\subjclass[2010]{60F10, 82C22}
\begin{document}

\maketitle

\begin{abstract}
We derive a large deviation principle for the empirical currents of lattice gas dynamics which combine a fast stirring mechanism (Symmetric Simple Exclusion Process) and  creation/annihilation mechanisms (Glauber dynamics). Previous results on the density large deviations can be recovered from this general large deviation principle. The contribution of external driving forces due to reservoirs at the boundary of the system is also taken into account.
\end{abstract}

\section{Introduction}

A major challenge in non-equilibrium statistical physics is to
provide a counterpart to the notion of free energy and to the Gibbs
measure which is the cornerstone of the equilibrium statistical
physics. Large deviation principles have been proposed as a good
alternative to investigate properties of non equilibrium systems
\cite{gallavotti,derrida,BDGJL3}. In particular, a lot of attention
has been devoted to the case of lattice gas dynamics for which
explicit large deviation principles can be derived in the
hydrodynamic scaling (we refer the reader to  \cite{derrida,BDGJL3}
for recent surveys and further references). Motivated by these
recent progress in physics, the original mathematical works
\cite{KOV,DV}  on the hydrodynamic large deviations for conservative
dynamics have been generalized to take into account the contribution
of reservoirs at the boundary of the system \cite{BDGJL1, BLM,FLM} and
the current of particles flowing in the system \cite{BDGJL2}.

\medskip

An interesting class of models has been introduced in \cite{DFL} to
describe reaction  diffusion equations by combining the Symmetric
Simple Exclusion Process (SSEP) to a Glauber dynamics which models
the annihilation and creation of particles. In \cite{DFL}, the
hydrodynamic  limit as well as the fluctuations of the density have
been investigated for these models. The density hydrodynamic large
deviations have then been proved in \cite{JLV}. In this paper, we
generalize this result by deriving the joint large deviations of the
density and of the (conservative and non-conservative) currents
flowing in the system. We also take into account the contribution of
reservoirs acting at the boundary of the system. Our results were
motivated by the recent research in non-equilibrium statistical physics on
dissipative dynamics and in particular on granular media
\cite{bertin,levine1,levine2}. We refer to \cite{BL} for a more
comprehensive discussion on the physical aspects of the large
deviations for dissipative systems.

\medskip

Contrary to the purely conservative dynamics \cite{BDGJL2}, one has
to introduce two types of currents: the conservative integrated
current $Q_t$ which records the particle jumps from the diffusive
part of the dynamics (SSEP) and the non-conservative integrated
current $K_t$ associated to the creation annihilation process
(Glauber). Heuristically, if one denotes by  $\dot Q_t(r)$ and $\dot
K_t (r)$ the instantaneous currents at time $t$ and location $r$,
then the density obeys the following equation
\begin{eqnarray*}
\partial_t \rho_t(r) = - \partial_r \dot Q_t(r) + \dot K_t (r) \, .
\end{eqnarray*}
The goal of this paper is to compute the asymptotic cost of
observing an atypical trajectory of the currents $Q,K$ and of the
density $\rho$ when the number of particles tends to infinity. 
Even so it is apparently more complicated to consider the joint
deviations of the empirical currents and the empirical density, 
it turns out that the structure of the joint large deviation functional $I_0$ is more transparent as
it splits into two distinct contributions involving either the
diffusive part or the Glauber part of the dynamics
\begin{eqnarray*}
I_0 (\rho, Q, K) = I_1 (\rho, Q) + I_2 (\rho, K) \,.
\end{eqnarray*}
The precise form of the functional $I_0$  can be found in
\eqref{I0}. In section \ref{sec: densiteGD}, the density large deviation
functional  derived in \cite{JLV} is recovered by
a contraction principle. 
This provides a natural interpretation of  the density large deviation functional
 as the optimal combination between the two macroscopic currents $Q, K$ in order to create
 the atypical density trajectory $\rho$ at a minimal cost $I_1 + I_2$.

\medskip

Our proof relies on the standard machinery developed to study hydrodynamic large deviations \cite{KOV, KL}, as well as on more 
recent tools introduced in  \cite{BDGJL1,BDGJL2,BLM,FLM}. Therefore in this paper, we will not detail the aspects of the proof which can be deduced readily from the existing literature and we will focus on the new features occurring from the non-conservative part of the dynamics.
The paper is organized as follows. In section \ref{sec: Notations and Results},
we introduce the model and state the main results. A strong form of local equilibrium is 
stated in section \ref{sec: A super-exponential replacement lemma}. 
The upper and lower bound of the joint  density/current large deviations are derived 
in sections \ref{sec: Large deviations upper bound} and \ref{sec: Large deviations lower bound}.
Finally the density large deviations are recovered in section \ref{sec: densiteGD}.

\section{Notations and Results}
\label{sec: Notations and Results}

\subsection{The microscopic dynamics and Notations}
\label{sec: microscopic}

We consider the one-dimensional Symmetric Simple Exclusion Process (SSEP) in the domain $\{-N,\dots,N\}$ with creation and annihilation of particles in the bulk and reservoirs at the boundaries. 
More precisely, the particles perform random walks with an exclusion constraint which imposes at most one particle per site and particles can be removed or created in the bulk according to a rate which depends on the local configurations. At the boundaries $\pm N$, two reservoirs maintain constant densities.
The dynamics can be viewed as a toy model for chemical reactions where the chemicals are injected at the boundaries, then diffuse and react in the system \cite{DFL}.

The stochastic dynamics is a Markov process on $\etats$ whose  generator is obtained by adding the generators of the different dynamics
\begin{eqnarray}
L_N = \frac{N^2}{2}L_{0,N}+\frac{N^2}{2}L_{+,N}+\frac{N^2}{2}L_{-,N}+L_{1,N} 
\, ,
\label{generateur}
\end{eqnarray}
with the SSEP generator
\begin{eqnarray*}
L_{0,N} f(\eta) = \sum_{x=-N}^{N-1}\left[f(\eta^{x, x+1})-f(\eta)\right], 
\end{eqnarray*}
and creation and annihilation generators at the boundaries depending on the parameters $\ggab^+$ and $\ggab^-$
\begin{align*}
\begin{split}
L_{+,N}f(\eta) &= \left[\eta(N)+\ggab_+(1-\eta(N)\right]\left[f(\eta^N)-f(\eta)\right]\\
L_{-,N}f(\eta) &= \left[\eta(-N)+\ggab_-(1-\eta(-N)\right]\left[f(\eta^{-N})-f(\eta)\right]
\end{split}
\end{align*}
where 
$$
\eta^x(z)=\left\{\begin{array}{lc}\eta(z) &\textrm{ if } z\neq x\\
1-\eta(z) &\textrm{ if }z=x
\end{array}\right.
;\qquad \eta^{x,y}(z)=\left\{\begin{array}{lc}\eta(y) &\textrm{ if } z=x\\
\eta(x) &\textrm{ if }z=y\\
\eta(z)&\textrm{ else}
\end{array}\right. .
$$ 
Finally the creation and annihilation generator in the bulk is given by
\begin{eqnarray*}
L_{1,N}f(\eta) = \sum_{x=-N+M+1}^{N-M-1} c(x,\eta)  \left[ f(\eta^x)- f(\eta) \right] \, ,
\end{eqnarray*}
where the rate of creation and annihilation $c(x,\cdot)$ is a non negative cylindric function with range $M$, i.e. 
there exists a fixed integer $M$ (possibly equal to 0) such that
$c(x,\eta) = c(\eta_{x-M}, \dots, \eta_{x+M})$ depends only of the values of $\eta$ in $\{ x-M, \dots, x+M \}$. 
Remark that, the diffusive part of the process is speeded up by $N^2$ to obtain a non trivial hydrodynamic evolution.

\medskip

For a given trajectory $\eta\, : \,  [0,T]\to\etats$, let
$\rho_t^N$ be the empirical density of particles in $[-1,1]$ at time $t \in [0,T]$ 
$$
\rho_t^N  =\frac{1}{N} \sum_{x=-N}^N\eta_t(x) \delta_{x/N}.
$$
We denote by $Q_{t}^N(x)$ the conservative current through the edge
$(x,x+1)$, i.e. the total number of particles that have jumped from
$x$ to $x+1$ minus the total number of particles that have jumped
from $x+1$ to $x$ between the times $0$ and $t$.
The empirical measure associated to this current is defined as the signed measure on $[-1,1]$ 
\begin{eqnarray}
\label{eq: qtn}
\Wtn=\dfrac{1}{N ^2}\sum_{x=-N}^{N-1}Q_t^N(x)\delta_{x/N}.
\end{eqnarray}
The renormalization by $N^2$ takes into account the space renormalization as well as the diffusive scaling of the SSEP dynamics which leads to an extra factor $N$. 
We denote by $K_t^N(x)$ the non-conservative current at site $x$, i.e. the total number
of particles created minus the total number of particles annihilated at site $x$ between times
 $0$ and $t$. The corresponding empirical measure is
$$
K^N_t =\dfrac{1}{N}  \sum_{x=-N}^{N}K_t^N(x)\delta_{x/N}.
$$

For any continuous function $\varphi\in C([-1,1])$, we will use the notation 
$$
\bra \rho^N_t \, {\varphi} \ket
= \frac{1}{N}\sum_{x=-N}^{N-1} \eta_{t} (x)\varphi\left(\frac{x}{N}\right).
$$
The same notation will be used for $K^N_t$.
As the conservative current applies to edges, we will write for $\varphi \in C^1([-1,1])$
$$
\bra \Wtn \, \nabla \varphi \ket
= \frac{1}{N}\sum_{x=-N}^{N-1} Q_{t}^N(x) 
\left(  \varphi\left(\frac{x+1}{N}\right)  -\varphi\left(\frac{x}{N}\right) \right).
$$
Note that $\varphi\left(\frac{x+1}{N}\right)  -\varphi\left(\frac{x}{N}\right)$ is of order $1/N$ so that the scaling is coherent with \eqref{eq: qtn}.
Finally, for any functions $f(s,r)$ in $[0,T] \times [-1,1]$, we use the shorthand notation
\begin{eqnarray*}
\forall s \in [0,T], \qquad  
\bra f_s \ket = \int_{-1}^1 \, dr \, f(s,r) \, .
\end{eqnarray*}

\medskip

The density profiles bounded away from 0 and 1 will be relevant so that  
we introduce $C_e([-1,1])$ the set of continuous functions $f$ on $[-1,1]$ for which
 there exists a constant $\epsilon>0$ such that $\epsilon < f < 1-\epsilon$.
Given a function $\gamma \in C_e([-1,1])$,
 let $\nu_\gamma^N$ be the Bernoulli product measure on $\{-N,N\}$ with marginals 
\begin{equation} 
\forall k\in\{-N,N\}, \qquad \nu_\gamma^N(\eta(k)=1)=\gamma( \frac{k}{N}).
\label{mesureproduit}
\end{equation}
For  $\alpha\in [0,1]$, the Bernoulli product measure with uniform  density $\ga$ is denoted by $\nu^N_\alpha$.
Let $\p^N_\gamma$ be the probability measure 
associated to the Markov process $(\rho_t^N,Q_t^N,K_t^N)$ on $[0,T]$
with initial measure $\nu_\gamma^N$ on the particle configurations.

\medskip

We define $\cM$ the set of signed measures on $[-1,1]$ endowed with the weak topology. 
We also consider $\cM_0$ the subset of $\cM$ of all absolutely continuous measures
wrt the Lebesgue measure with positive density bounded by 1
\begin{equation*} 
\cM_0 = \{ \gr(x) dx \in \cM, \qquad 0 \leq \gr(x) \leq 1 \quad a.e. \} \, .
\end{equation*}
In order to consider the joint large deviations of  $(\rho_t^N,Q_t^N,K_t^N)$ during the time interval $[0,T]$, we will work on  $\mathcal{E}=D \big([0,T], \cM_0 \times \cM \times \cM \big)$ the space of cad-lag trajectories with values in 
$\cM_0 \times \cM \times \cM$ endowed with the Skorohod topology \cite{EK}.

\subsection{The results}

The hydrodynamic behavior of the microscopic dynamics introduced in section \ref{sec: microscopic}, can be described in terms of a few macroscopic parameters
\begin{align}
\forall \ga \in [0,1], \qquad 
C(\ga)=\nu_\ga \big(c(0,\eta)(1-\eta(0)\big), \qquad 
A(\ga)=\nu_\ga \big(c(0,\eta)\eta(0)\big) \, ,
\label{CetA}
\end{align}
where $C$ and $A$ represent the average creation and 
annihilation rates at density $\ga$ and 
\begin{eqnarray}
\label{eq: conductivity}
\forall \ga \in [0,1], \qquad  \gs(\ga)= \nu_\ga (\eta_0 (1-\eta_1) ) = \ga (1- \ga) \, ,
\end{eqnarray}
which is the conductivity of the SSEP.
Finally, we denote by $\bar \rho_{\pm} =\dfrac{\ggab_{\pm}}{1+\ggab_{\pm}}$ the densities at the boundaries imposed by the reservoirs.

\medskip

We are now ready to state the hydrodynamic limit 

\begin{thm}
\label{thm: limitehydro}
Let $\gamma\in C_e([-1,1])$ with $\gamma(-1)= \bar \gr_-$ and $\gamma(1)= \bar \gr_+$. 
For each $T>0$ and $\varphi\in C([0,T] \times [-1,1])$,
$$\forall  \delta>0, \qquad
\lim_{N\to\infty} \p^N_\gamma\left[
\left| \int_0^T dt  \bra \rho_t^N\ \varphi_t \ket - \bra \bar \rho_t \; \varphi_t \ket \right| > \delta \right]=0,
$$
where $\bar \rho(t,x)$ is the unique weak solution of 
\begin{equation}
\left\{\begin{array}{lcl}
\partial_t  \bar \rho (t,x) &=& \frac{1}{2}\Delta \bar  \rho (t,x) +C(\bar  \rho(t,x))-A(\bar  \rho(t,x))\\
\bar  \rho(t,\pm 1)&=& \bar  \rho_\pm\\
\bar  \rho(0,x)&=&\gamma(x)
\end{array}\right.  
\label{limitehydro}
\end{equation}
\end{thm}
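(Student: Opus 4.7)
The strategy is the entropy method combined with the block averaging procedure used in \cite{KL,DFL,JLV}. Four ingredients are needed: (i) a martingale decomposition of $\bra \rho^N_t \, \varphi_t \ket$, (ii) a super-exponential replacement of the local Glauber rates by their equilibrium averages, (iii) tightness of the empirical density, (iv) uniqueness of the limiting PDE.

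First, for $\varphi \in C^{1,2}([0,T]\times [-1,1])$, Dynkin's formula gives
$$M_t^{N,\varphi} \;=\; \bra \rho^N_t \, \varphi_t \ket - \bra \rho^N_0 \, \varphi_0 \ket - \int_0^t ds\, (\partial_s + L_N) \bra \rho^N_s \, \varphi_s \ket,$$
with quadratic variation of order $1/N$, so that $M^{N,\varphi}$ vanishes uniformly in probability by Doob's inequality. The SSEP generator $\frac{N^2}{2}L_{0,N}$, after two discrete integrations by parts, contributes $\frac{1}{2}\bra \rho^N_s \, \Delta \varphi_s \ket$ up to boundary terms; the boundary generators $\frac{N^2}{2}L_{\pm,N}$ produce terms of order $N$ involving $\eta_s(\pm N)-\bar\rho_\pm$, which are absorbed by choosing test functions vanishing at the endpoints or, equivalently, by showing that $\eta_s(\pm N)$ is pinned to $\bar\rho_\pm$ in an averaged sense. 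The Glauber generator $L_{1,N}$ yields
$$\frac{1}{N}\sum_{x=-N+M+1}^{N-M-1} \varphi_s\!\left(\tfrac{x}{N}\right)\Bigl[\,c(x,\eta_s)\bigl(1-\eta_s(x)\bigr) - c(x,\eta_s)\eta_s(x)\,\Bigr].$$

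The key step is to replace the cylinder functions $c(x,\eta)(1-\eta(x))$ and $c(x,\eta)\eta(x)$, which have range $M$, by $C(\etabarl(x))$ and $A(\etabarl(x))$ where $\etabarl$ is the empirical density averaged over a block of size $l = \epsilon N$ around $x$. This is the classical one-block/two-block argument, which in our setting follows from the super-exponential replacement lemma of Section \ref{sec: A super-exponential replacement lemma} after letting first $N\to\infty$ and then $\epsilon\to 0$. Since $C$ and $A$ are continuous functions of the density, this converts the Glauber contribution into $\bra C(\bar\rho_s)-A(\bar\rho_s) \, \varphi_s \ket$ in the limit.

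Tightness of the sequence $\rho^N$ in $D([0,T], \cM_0)$ is obtained via the Aldous criterion applied to $\bra \rho^N_t \, \varphi \ket$ for a dense countable family of smooth test functions, the required modulus estimate being an immediate consequence of the martingale decomposition and the boundedness of the occupation numbers. Passing to the limit along a convergent subsequence, one checks that every limit point is concentrated on weak solutions of \eqref{limitehydro}; since $C$ and $A$ are Lipschitz and the boundary data are prescribed, a standard Gronwall argument ensures uniqueness of the weak solution, so the full sequence converges. The main technical obstacle is the treatment of the reservoir boundaries: because $L_{\pm,N}$ acts only on the single site $\pm N$, extracting the macroscopic Dirichlet condition $\bar\rho(t,\pm 1)=\bar\rho_\pm$ requires an additional boundary replacement lemma, as developed in \cite{BDGJL1,BLM,FLM}, controlling the difference between $\eta_s(\pm N)$ and a spatial average over the first block of size $\epsilon N$ near the boundary.
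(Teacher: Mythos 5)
Your proposal is correct and follows exactly the route the paper itself indicates: the paper omits the proof of Theorem \ref{thm: limitehydro}, stating that it follows from the machinery developed for the large deviations together with \cite{DFL,BDGJL2}, and your four ingredients correspond precisely to that machinery (the Dynkin/martingale computation underlying \eqref{radonnikodym}--\eqref{densite}, the super-exponential replacement of Theorem \ref{superexp} with $G=H=0$ including its boundary-pinning statement, the tightness estimates of Section \ref{fermes}, and the uniqueness of weak solutions from the Appendix). No gaps to report.
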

The meaning of weak solution of (\ref{limitehydro}) is recalled in the Appendix (with $H= G = 0$).

\medskip

As a consequence of Theorem \ref{thm: limitehydro} a law of large numbers holds for the currents.
\begin{thm}
For any test function $\varphi\in C^1([-1,1])$ and $t >0$
\begin{eqnarray*}
\forall \gd >0,
\qquad  && 
\lim_{N\to\infty} \p^N_\gamma
\left[ \left| \bra Q_t^N \, \varphi \ket + \dfrac{1}{2} \int_0^t \, ds \, \bra \varphi \, \nabla \bar  \rho_s \ket \right| > \delta \right] =0 \, ,\\
&&
\lim_{N\to\infty} \p^N_\gamma\left[
\left| \bra K_t^N \, \varphi \ket -\int_0^t \, ds \, \big\bra  \varphi \, \left(C(\bar \rho_s )-A(\bar \rho_s)\right) \big\ket \right| > \delta\right]=0,
\end{eqnarray*}
where $\bar  \rho$ is the unique weak solution of (\ref{limitehydro}).
\label{limitehydroK}
\end{thm}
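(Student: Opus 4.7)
The plan is to treat each current through its compensated martingale and to pass to the hydrodynamic limit via Theorem \ref{thm: limitehydro}. A standard martingale decomposition yields
\begin{equation*}
Q_t^N(x) = \tfrac{N^2}{2}\int_0^t [\eta_s(x)-\eta_s(x+1)]\,ds + M_t^N(x),
\end{equation*}
where $M^N(x)$ is a mean-zero martingale with $\langle M^N(x)\rangle_t\leq N^2 t$ (read off from the jump rates of the SSEP). Because jumps across distinct edges are disjoint events, the $M^N(x)$ are orthogonal, so
\begin{equation*}
\mathbb{E}\Big[\bigl(\tfrac{1}{N^2}\textstyle\sum_x M_t^N(x)\,\varphi(x/N)\bigr)^2\Big] \leq \tfrac{2 t\,\|\varphi\|_\infty^2}{N},
\end{equation*}
and the martingale contribution to $\bra Q_t^N,\varphi\ket$ vanishes in probability.

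For the drift a discrete summation by parts gives
\begin{equation*}
\textstyle\sum_{x=-N}^{N-1}[\eta_s(x)-\eta_s(x+1)]\,\varphi(x/N) = \bra\rho_s^N,\nabla\varphi\ket + \eta_s(-N)\varphi(-1)-\eta_s(N)\varphi(1) + O(1/N).
\end{equation*}
After multiplication by the prefactor $\tfrac12$ inherited from the SSEP rates and integration in time, Theorem \ref{thm: limitehydro} identifies the bulk limit as $\tfrac12\int_0^t\bra\bar\rho_s,\nabla\varphi\ket\,ds$. One integration by parts rewrites this as
\begin{equation*}
-\tfrac12\int_0^t\bra\nabla\bar\rho_s,\varphi\ket\,ds + \tfrac12\int_0^t\bigl[\bar\rho_+\varphi(1)-\bar\rho_-\varphi(-1)\bigr]\,ds,
\end{equation*}
and the time-averages of the boundary occupations $\eta_s(\pm N)$, which converge to $\bar\rho_\pm$ because the rescaled reservoir generators $\tfrac{N^2}{2}L_{\pm,N}$ equilibrate the boundary sites instantaneously in the limit, cancel these surface terms. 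This yields the first claim.

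For the non-conservative current, Dynkin's formula gives the analogue
\begin{equation*}
K_t^N(x) = \int_0^t c(x,\eta_s)\bigl[1-2\eta_s(x)\bigr]\,ds + \widetilde M_t^N(x),
\end{equation*}
with $\langle \widetilde M^N(x)\rangle_t \leq \|c\|_\infty t$, so the analogous orthogonality argument controls the martingale part of $\bra K_t^N,\varphi\ket$. For the compensator $\tfrac{1}{N}\int_0^t\sum_x c(x,\eta_s)[1-2\eta_s(x)]\,\varphi(x/N)\,ds$, I invoke the one-block replacement lemma of Section \ref{sec: A super-exponential replacement lemma}, which substitutes the translation-invariant cylindric function $c(0,\eta)[1-2\eta(0)]$ by its grand-canonical mean $C(\rho)-A(\rho)$ evaluated at the local empirical density in an $\epsilon N$-box around $x$. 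Sending $\epsilon\to 0$ and using continuity of $C-A$ together with Theorem \ref{thm: limitehydro} delivers the stated limit $\int_0^t\bra\varphi\,(C(\bar\rho_s)-A(\bar\rho_s))\ket\,ds$.

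The main obstacle is the non-conservative drift: because $c(x,\eta)$ depends on $M$ neighbours rather than only on $\eta(x)$, closing the equation requires the super-exponential one-block estimate. The SSEP part, by contrast, is essentially algebraic, needing only summation by parts, a second-moment bound on the edge martingales, and the density hydrodynamic limit; the only subtle point there is the identification of the boundary occupations $\eta_s(\pm N)$ with $\bar\rho_\pm$, which is a standard boundary replacement for reservoir-driven SSEP.
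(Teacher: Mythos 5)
Your proof is correct. Be aware, though, that the paper does not actually prove Theorem \ref{limitehydroK}: it only remarks that the statement ``can be deduced from the methods used to prove the large deviations'', i.e.\ from the exponential martingale \eqref{radonnikodym} and the super-exponential estimates of Theorem \ref{superexp}, citing \cite{DFL,BDGJL2}. Your route is the more elementary law-of-large-numbers argument: Dynkin compensators plus $L^2$-orthogonality of the edge (resp.\ site) martingales give variance bounds of order $1/N$, so no exponential concentration is needed for the noise terms --- exponential bounds only become necessary for the deviation estimates later in the paper. The places where you fall back on the paper's machinery are exactly the right ones: the boundary identification $\eta_s(\pm N)\simeq\bar\rho_\pm$ in time average is the second assertion of Theorem \ref{superexp} (with $G=H=0$) and cancels the surface terms produced by your summation by parts, while the one-block replacement for the cylinder functions $c(0,\eta)\eta(0)$ and $c(0,\eta)(1-\eta(0))$ is the first assertion. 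Two small points to tighten: (a) the interchange of the limits $N\to\infty$ and $\epsilon\to 0$ in $(C-A)(\rho^N_s*\iota_\epsilon)$ uses convergence of $\rho^N$ in $D([0,T],\cM_0)$ in probability, which is what the proof of Theorem \ref{thm: limitehydro} actually delivers but is slightly stronger than the time-integrated statement as printed; (b) your final integration by parts uses the classical trace identity $\int_0^t\bra\bar\rho_s\,\nabla\varphi\ket\,ds - t\,[\bar\rho_+\varphi(1)-\bar\rho_-\varphi(-1)] = -\int_0^t\bra\varphi\,\nabla\bar\rho_s\ket\,ds$, which is the sign convention under which the stated theorem is true; the corresponding display \eqref{eq: weak bd 2} in the Appendix is printed with the opposite sign on its right-hand side, so quote the classical identity rather than that equation verbatim.
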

Theorems \ref{thm: limitehydro}, \ref{limitehydroK} can be deduced from the methods used to prove the large deviations
so that their derivation is omitted (see also  \cite{DFL, BDGJL2}).

\bigskip

Before stating the large deviation principle, we need more notation. 
Let $G$ and $H$ be smooth functions in $[0,T] \times [-1,1]$.
For a given trajectory $(\rho,Q,K)$ in $\mathcal{E}$, we set
\begin{eqnarray}
J_{G,H} \left(\rho,Q,K\right)= J_H^1(\rho,Q) + J_G^2(\rho,K)
\label{eq: J(G,H)}
\end{eqnarray}
with 
\begin{align}
\begin{split}
J_H^1(\rho,Q)=&\left<{Q_T}\nabla H_T\right>-\int_0^T \left<{Q_s}\frac{d}{ds}\nabla H_s\right>ds
-\frac{1}{2}\int_0^T\left<\rho_s\Delta H_s\right>ds\\
& -\frac{1}{2}
\int_0^T\left<\sigma(\rho_s)(\nabla H_s)^2\right>ds
+\frac{1}{2}\bar \gr_+\int_0^T \nabla H(s,1) \, ds
-\frac{1}{2}\bar \gr_-\int_0^T \nabla H(s,-1)\, ds,
\end{split}
\label{J1}
\end{align}
where $\sigma(u)=u(1-u)$ is defined in \eqref{eq: conductivity} and
\begin{align}
\begin{split}
J_G^2(\rho,K)=\left<K_TG_T\right>-\int_0^T\left<K_s\frac{d}{ds}G_s\right>ds
-\int_0^T  \, ds \, \left< C(\rho_s)(e^{G_s}-1) +  A(\rho_s)(e^{-G_s}-1)\right> \, ,
\end{split}\label{J2}
\end{align}
where $A$ and $C$ were introduced in \eqref{CetA}.

The first functional is related to the contribution of the 
conservative currents and the second one to the non-conservative currents
(see theorem \ref{lowerreg}). We define
\begin{equation*}
J\left(\rho,Q,K\right) =\sup_{G,H}J_{G,H}\left(\rho,Q,K\right) 
=\sup_{H\in C^{1,2}}J_H^1(\rho,Q)+\sup_{G\in C^{1,0}}J_G^2(\rho,K) \, ,
\end{equation*} 
where the supremum is taken on regular functions $G$ and $H$.
Note that the functions $G$ and $H$ can take arbitrary (finite) values at the boundaries.

\medskip

Define $\rond{A}$ as the set of trajectories $(\rho,Q,K)$ satisfying the following two conditions:
\begin{itemize}
\item {\it Conservation law.} For all test function $\varphi \in C^1([-1,1])$ vanishing at the boundaries
\begin{equation}
\bra \rho_t \, \varphi \ket - \bra \rho_0 \varphi \ket =
\bra Q_t \, \partial_x \varphi \ket + \bra K_t \, \varphi \ket,\qquad Q_0=0,\ K_0=0.
\label{relation}
\end{equation}
\item {\it Energy condition.} The energy $\cQ(\gr)$ of the density trajectory is finite with  
\begin{eqnarray}
\cQ (\gr) = \sup_{\gp} \left\{
\int_0^T dt \int_{-1}^1 dx \; \gr(t,x) \nabla \gp(t,x) - 
\frac{1}{2} \int_0^T dt \int_{-1}^1 dx \, 
\gp(t,x)^2 \right.  \nonumber\\
\qquad \qquad \left.
- \int_0^T dt (\bar \gr_+ \, \gp(t,1) - \bar \gr_- \, \gp(t,-1))
\right\} \, ,
\label{eq: energy}
\end{eqnarray}
and the supremum is taken over smooth functions $\gp$ in $[0,T] \times [-1,1]$.
\end{itemize}

For smooth trajectories the conservation law reduces to  
$$
\partial_t \rho +\partial_x \partial_t Q -\partial_t K=0 \, ,
$$ 
where $ \partial_t Q, \partial_t K$ are the instantaneous currents and the energy condition reads
\begin{eqnarray*}
\cQ (\gr) = \frac{1}{2} \int_0^T dt \int_{-1}^1 dx \; \big( \nabla \gr(t,x) \big)^2
< \infty \, .
\end{eqnarray*}
The energy condition was introduced in \cite{QRV,BLM,FLM} to control the approximation procedure in the derivation of the 
large deviation lower bound (see Theorem \ref{lower}).

\medskip

Finally we define the dynamical rate function 
\begin{equation}
I_0(\rho,Q,K)=\left\{
\begin{array}{ll}
J(\rho,Q,K) & \textrm{if} \ \ 
(\rho,Q,K)\in\rond{A} \, , \\
+ \infty& \textrm{otherwise} \, .
\end{array}
\right. 
\label{I0}
\end{equation} 
To take into account the large deviations of the initial measure $\nu_\gamma^N$, we introduce for any function $m : [-1,1] \to [0,1]$
$$
h_\gamma(m)= \left \bra m\log\dfrac{m}{\gamma}\right \ket +\left \bra (1-m)\log\dfrac{1-m}{1-\gamma}\right \ket \, ,
$$
and
$$
I_\gamma (\mu) =
\left\{\begin{array}{ll}
h_\gamma(m)&\textrm{ if } \mu(dx) =m (x) \, dx \, ,\\
+\infty&\textrm{otherwise} \, .
\end{array} 
\right.
$$

The rate function is the sum of the dynamical deviation cost from the initial measure and the
deviation cost from the hydrodynamic trajectory 
$$
I(\rho,Q,K)=I_0(\rho,Q,K)+I_\gamma(\rho_0).
$$

From now, the initial density profile $\gamma$ is a given smooth function in $C_e([-1,1])$ equal to $\bar \gr_\pm$ at the boundaries.
We  state the large deviation Theorems.
\begin{thm}
For all closed set $\rond{F}\in\rond{E}$
$$
\limsup_{N\to\infty}\dfrac{1}{N}\log \p^N_\gamma\left[(\rho^N_t,Q^N_t,K^N_t)\in \rond{F}\right]
 \leqslant -\inf_{(\rho,Q,K)\in \rond{F}} I(\rho,Q,K).
$$
\label{upper}
\end{thm}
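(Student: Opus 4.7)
The plan is to apply the standard Donsker--Varadhan / Kipnis--Olla--Varadhan exponential tilt scheme, adapted to currents as in \cite{BDGJL2}, to the joint process $(\rho^N,Q^N,K^N)$. The decomposition $J_{G,H}=J_H^1+J_G^2$ suggests tilting the dynamics by two independent families of smooth test functions: $H$ couples to the conservative current $Q^N$ and acts on the SSEP part of $L_N$, while $G$ couples to the non-conservative current $K^N$ and acts on the Glauber part. For smooth $(G,H)$, after integration by parts in time, the exponential of
$$
N\left[\bra Q_T^N\,\nabla H_T\ket-\int_0^T\bra Q_s^N\,\partial_s\nabla H_s\ket\,ds+\bra K_T^N\,G_T\ket-\int_0^T\bra K_s^N\,\partial_s G_s\ket\,ds\right]
$$
minus its Dynkin compensator is a mean-one martingale under $\p_\gamma^N$. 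Expanding $e^{-NU_s}L_N e^{NU_s}$ produces: on each diffusive edge a quadratic term $\tfrac{1}{2}\eta(x)(1-\eta(x+1))(\nabla_N H_s)^2+O(1/N)$; on each creation/annihilation site the contribution $c(x,\eta)\bigl[(1-\eta(x))(e^{G_s(x/N)}-1)+\eta(x)(e^{-G_s(x/N)}-1)\bigr]$; and on the boundary, after discrete summation by parts on the SSEP, flux terms converging to $\tfrac{1}{2}\bar\gr_\pm\nabla H(s,\pm 1)$.

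Next, I would invoke the super-exponential replacement lemma of Section~\ref{sec: A super-exponential replacement lemma} to replace the local functions $\eta(x)(1-\eta(x+1))$ by $\gs$ evaluated at a block-averaged empirical density, and the $c(x,\eta)$-weighted terms by $C$, $A$ evaluated at the same block average. Sending $N\to\infty$ and then letting the block size $\epsilon\to 0$, the compensator converges to the non-stochastic part of $J_{G,H}(\rho,Q,K)$. Combining this with control of the initial distribution by the explicit Radon--Nikodym density of $\nu_\gamma^N$ against a reference Bernoulli measure (which produces the relative entropy contribution $I_\gamma(\rho_0)$), one obtains
$$
\bbE_\gamma^N\!\left[\exp\!\Bigl(N\bigl(J_{G,H}(\rho^N,Q^N,K^N)+I_\gamma(\rho_0^N)\bigr)\Bigr)\right]\le\exp(o(N)).
$$
Chebyshev's inequality then yields, for every closed $\rond{F}\subset\rond{E}$ and every smooth pair $(G,H)$,
$$
\limsup_{N\to\infty}\frac{1}{N}\log\p_\gamma^N\bigl[(\rho^N,Q^N,K^N)\in\rond{F}\bigr]\le -\inf_{(\rho,Q,K)\in\rond{F}}\bigl(J_{G,H}(\rho,Q,K)+I_\gamma(\rho_0)\bigr).
$$

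To upgrade this to the full variational formula $\sup_{G,H}J_{G,H}$ defining $I_0$, I would first establish exponential tightness of the law of $(\rho^N,Q^N,K^N)$ on $\rond{E}$ at speed $N$: tightness of $\rho^N$ in $D([0,T],\cM_0)$ is classical, while tightness of $Q^N$ and $K^N$ follows from applying the exponential martingale to regularized indicator-type test functions as in \cite{BDGJL2}. On compact subsets of $\rond{E}$, a standard minimax argument (convexity and lower semicontinuity of $(G,H)\mapsto J_{G,H}$) exchanges the supremum and the infimum, giving the bound with $\sup_{G,H}J_{G,H}$. The restriction to $\rond{A}$ is essentially for free: the microscopic identity $\rho_t^N-\rho_0^N=-\partial_x Q_t^N+K_t^N$ forces every realization to satisfy a discrete version of \eqref{relation}, so trajectories in $\rond{F}\setminus\rond{A}$ contribute nothing in the limit; the energy condition \eqref{eq: energy} is enforced by the Riesz-representation trick of \cite{QRV,BLM,FLM} applied to the $H$-supremum in $J_H^1$.

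The main obstacle is the super-exponential replacement for the Glauber part. On the diffusive edges, the $N^2$ speedup of the SSEP produces a Dirichlet form of order $N^2$ that dominates any exponential tilt of order $N$, so the one-block / two-block estimates of \cite{KL} transfer routinely to exponential moments. By contrast, the Glauber generator $L_{1,N}$ contributes to the Dirichlet form only at order $N$, exactly at the scale of the tilt $\exp\bigl(N\sum_x G_s(x/N)[\cdots]\bigr)$; consequently $c(x,\eta)$ cannot be replaced by its macroscopic average inside the exponential using Glauber entropy production alone. The key new ingredient is to exploit the fast mixing of the SSEP component to propagate local equilibrium uniformly in the tilt $G$, which is precisely the content (and delicate point) of Section~\ref{sec: A super-exponential replacement lemma}; once that tool is in hand, the remainder of the upper bound proof is the routine combination described above.
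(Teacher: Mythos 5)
Your proposal follows essentially the same route as the paper: an exponential change of measure via the tilted dynamics \eqref{eq: modified dynamics} (equivalently, the Dynkin martingale you describe), the super-exponential replacement of Theorem \ref{superexp} to pass from the microscopic compensator to $J_{G,H}$, a minimax argument on compacts combined with exponential tightness (Lemmas \ref{lemcontinuite} and \ref{lem: compacite}) to reach closed sets, and the concentration on $\rond{A}$ via the microscopic conservation identity and the energy estimate of \cite{QRV,BLM,FLM}. Your diagnosis of the Glauber replacement as the delicate point, resolved by the $N^2$ Dirichlet form of the stirring, matches the paper's treatment, so the argument is correct and not materially different.
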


\medskip

We first state the lower bound for regular trajectories.
\begin{thm}
Let $(\rho,Q,K)$ be a regular trajectory. Then the large deviation functional $I_0$ has an explicit form
\begin{equation}
\label{eq: functional}
I_0( \gr, Q, K)
= \int_0^T  dt \int_0^1 dx \;
\left\{ \frac{\big( \dQ(x,t) + \partial_x \gr (x,t) \big)^2}{2 \gs \big( \gr (x,t)\big)}
+ \Phi \Big( \gr(x,t) , \dK(x,t) \Big) \right\} \, , 
\end{equation} 
with 
\begin{equation}
\label{eq: dissipative}
\Phi (\gr, \gk) = C(\gr) + A(\gr) - \sqrt{\gk^2 + 4  A(\gr) C(\gr)}
+ \gk \log \left( \frac{\sqrt{\gk^2 + 4  A(\gr) C(\gr)} + \gk}{2 C(\gr)} \right) \, .
\end{equation} 
If $C(\gr)=0$, then $\Phi$ becomes 
\begin{eqnarray*}
\Phi (\gr, \gk) = 
\begin{cases}
A(\gr) + \gk - \gk \log \left( \frac{- \gk }{ A(\gr)}\right) , \qquad  & {\rm if} \quad  \gk \leq 0,\\
\infty, \qquad  & {\rm if} \quad  \gk >0 \, .
\end{cases}
\end{eqnarray*} 
A similar formula holds if $A(\gr)=0$.

For any open set $\rond{O}\in\rond{E}$ containing the regular trajectory $(\rho,Q,K)$
$$
\liminf_{N\to\infty}\dfrac{1}{N}\log \p^N_\gamma\left[(\rho^N_t,Q^N_t,K^N_t) \in
\rond{O}\right] \geqslant -  I(\rho,Q,K).
$$
\label{lowerreg}
\end{thm}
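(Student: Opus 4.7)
The proof splits into two independent parts: first deriving the explicit formula \eqref{eq: functional}--\eqref{eq: dissipative}, then establishing the lower bound via a change of measure.

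Since $J_{G,H}=J_H^1+J_G^2$ decouples in $(H,Q)$ and $(G,K)$, so does the supremum defining $J$, and the two terms are optimized separately on a regular trajectory. In $J_H^1$ I would integrate by parts in time (using $Q_0=0$) to replace the first two terms by $\int_0^T dt\,\bra\partial_tQ_s,\nabla H_s\ket$, and integrate by parts twice in space in $\bra\rho_s\Delta H_s\ket$; the boundary contributions so produced cancel the explicit boundary terms of \eqref{J1} because $\rho(t,\pm 1)=\bar\rho_\pm$ on a regular trajectory. This yields a pointwise quadratic in $\nabla H$ whose pointwise maximum reproduces the first term of \eqref{eq: functional}. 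For $J_G^2$ a time integration by parts produces $\int dt\,\bra\partial_t K\, G - C(\rho)(e^G-1) - A(\rho)(e^{-G}-1)\ket$; pointwise differentiation in $G$ gives the quadratic $Ce^{2G}-\partial_t K\, e^G - A=0$, and substituting the positive root $e^{G^*}=\bigl(\partial_t K+\sqrt{(\partial_t K)^2+4AC}\bigr)/(2C)$, together with the identity $Ce^{G^*}+Ae^{-G^*}=\sqrt{(\partial_t K)^2+4AC}$, produces $\Phi(\rho,\partial_t K)$. The degenerate cases $A=0$ or $C=0$ follow from the same pointwise optimization.

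For the lower bound I would adapt the Donsker--Varadhan change-of-measure scheme used in \cite{KOV,KL,BDGJL2}. Let $H^*,G^*$ be the pointwise maximizers from the first part and define a tilted Markov process $\p^{N,H^*,G^*}$ by multiplying the SSEP rate across the bond $(x,x+1)$ by $\exp\bigl(H^*(t,\tfrac{x+1}{N})-H^*(t,\tfrac{x}{N})\bigr)$, the creation and annihilation rates at site $x$ by $e^{G^*(t,x/N)}$ and $e^{-G^*(t,x/N)}$ respectively, and replacing the initial Bernoulli profile $\gamma$ by $\rho_0$. A Markov-chain Girsanov computation, together with the super-exponential replacement lemma of section~\ref{sec: A super-exponential replacement lemma} that turns the microscopic cylinder functions $c(x,\eta)$ and $\eta_x(1-\eta_{x+1})$ into $C,A,\sigma$ evaluated at $\rho^N$, then gives
\begin{equation*}
\frac{1}{N}\log\frac{d\p^N_\gamma}{d\p^{N,H^*,G^*}}\;=\;-J_{G^*,H^*}(\rho^N,Q^N,K^N)-h_\gamma(\rho_0)+o_N(1)
\end{equation*}
in probability under $\p^{N,H^*,G^*}$, uniformly on trajectories close to $(\rho,Q,K)$.

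One next checks that the hydrodynamic limit of $(\rho^N,Q^N,K^N)$ under $\p^{N,H^*,G^*}$ is exactly $(\rho,Q,K)$: the tilted generator is a weakly asymmetric SSEP with external field proportional to $\nabla H^*/N$ combined with a Glauber part of modified rates $Ce^{G^*},Ae^{-G^*}$, so by the very choice of $H^*,G^*$ its hydrodynamic equation has the prescribed $\rho$ as unique weak solution with instantaneous currents $\partial_t Q,\partial_t K$; this is proved along the lines of Theorems~\ref{thm: limitehydro} and \ref{limitehydroK} with the new rates, the energy condition providing the spatial regularity needed by the replacement lemma. Hence $\p^{N,H^*,G^*}[(\rho^N,Q^N,K^N)\in \rond{O}]\to 1$ for any open $\rond{O}$ containing $(\rho,Q,K)$, and the standard Jensen argument applied to the Radon--Nikodym identity above delivers
\begin{equation*}
\liminf_N \tfrac{1}{N}\log\p^N_\gamma[(\rho^N,Q^N,K^N)\in \rond{O}]\;\geq\;-J_{G^*,H^*}(\rho,Q,K)-h_\gamma(\rho_0)=-I(\rho,Q,K).
\end{equation*}
The main technical obstacle is making the super-exponential replacement uniform in the Glauber tilt $e^{\pm G^*}$: unlike the conservative perturbation, the exponential modification of reaction rates is not directly controlled by a Dirichlet-form estimate, so the one-block/two-block estimates of section~\ref{sec: A super-exponential replacement lemma} must be rerun under the tilted measure. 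Regularity of the target (in particular $\rho$ bounded away from $0$ and $1$, whence $H^*$ and $G^*$ are uniformly bounded) is precisely what rescues this perturbative argument.
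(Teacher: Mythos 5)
Your proposal is correct and follows essentially the same route as the paper: the explicit formula is obtained by identifying the optimal drifts $(G,H)$ through $\partial_t Q=-\tfrac12\nabla\rho+\sigma(\rho)\nabla H$ and $\partial_t K=C(\rho)e^{G}-A(\rho)e^{-G}$ and completing the square (resp.\ using $e^x-x-1\ge 0$), exactly as in Lemma \ref{lem: regularfunctionnal}, and the lower bound is the same tilting argument as Proposition \ref{prop: S} (Radon--Nikodym derivative, super-exponential replacement, hydrodynamic limit and uniqueness for the perturbed dynamics). The technical concerns you flag (uniformity of the replacement lemma under the Glauber tilt, boundedness of $G^*,H^*$ from $\rho$ being bounded away from $0$ and $1$) are precisely the points the paper handles via Theorem \ref{superexp} and the regularity hypothesis.
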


\begin{rem}
The first contribution to \eqref{eq: functional} comes from the difference between the
instantaneous empirical current ${\partial_t Q}$ and the canonical
instantaneous current associated to $\rho$ which is $-\frac{1}{2}\nabla\rho$. 
This term has already been analyzed in the conservative dynamics \cite{BD, BDGJL2}.
The second term in \eqref{eq: functional} should be interpreted as the large deviation functional associated to 
Poisson processes with parameters $C(\rho_t)$ and $A(\rho_t)$.
\end{rem}

In order to derive the lower bound for general trajectories,
we introduce two technical assumptions on the rates \eqref{CetA}:

\begin{hyp}[\textbf{L1}]
The rate $A$ (resp $C$) is either concave and positive on $]0,1[$ or uniformly equal to zero.
\end{hyp}

\begin{hyp}[\textbf{L2}]
The functions $A$ and $C$ are monotonous and
\begin{eqnarray}
\label{eq: L2}
\forall z \in [0,1], \qquad A^\prime (z) \geq 0, \qquad C^\prime (z) \leq 0 \, . 
\end{eqnarray}
\end{hyp}

Then
\begin{thm}
\label{thm : lower general}
Assume $(\textbf{L1})$ and $(\textbf{L2})$, then for all open set $\rond{O}\in\rond{E}$,
$$
\liminf_{N\to\infty}\dfrac{1}{N}
\log \p^N_\gamma \left[(\rho^N_t,Q^N_t,K^N_t) \in \rond{O}\right]
 \geqslant -\inf_{(\rho,Q,K) \in \rond{O}} I(\rho,Q,K) \, .
$$
\label{lower}
\end{thm}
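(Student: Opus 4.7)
The plan is to deduce Theorem \ref{lower} from Theorem \ref{lowerreg} by the standard approximation (or $I$-density) argument: given an open set $\rond{O}\subset\rond{E}$ and any $(\rho,Q,K)\in\rond{O}$ with $I(\rho,Q,K)<\infty$, I would construct a sequence of regular trajectories $(\rho^n,Q^n,K^n)\in\rond{O}$ satisfying $I(\rho^n,Q^n,K^n)\to I(\rho,Q,K)$; applying Theorem \ref{lowerreg} to each $(\rho^n,Q^n,K^n)$ and passing to the $\limsup$ then yields the claimed inequality. Finiteness of $I$ forces $(\rho,Q,K)\in\rond{A}$ with $\cQ(\rho)<\infty$, so that one can extract from the conservation law \eqref{relation} measurable instantaneous currents $\dQ(t,r),\dK(t,r)$ satisfying $\partial_t\rho=-\partial_r\dQ+\dK$ in the sense of distributions and for which $I_0$ admits the integral representation \eqref{eq: functional}.

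The approximating sequence is built in three successive stages, in the spirit of \cite{BDGJL1,BLM,FLM}. First, interpolate with the stationary hydrodynamic profile $\bar{\rho}_{\ast}$ associated to the boundary data, $\rho^\lambda=(1-\lambda)\rho+\lambda\bar{\rho}_{\ast}$ with $\lambda\in(0,1)$, and define $\dQ^\lambda,\dK^\lambda$ by the analogous convex combinations with the currents $\dQ_\ast,\dK_\ast$ of $\bar{\rho}_{\ast}$; this step pushes $\rho^\lambda$ into $C_e([-1,1])$ at every time, preserves the continuity equation, and retains the reservoir boundary values. Second, convolve $(\rho^\lambda,\dQ^\lambda,\dK^\lambda)$ with a smooth spatio-temporal mollifier $\iota_\epsilon$, extending the trajectory by its initial data for $t<0$ and by reflection near $\pm 1$ so that the continuity equation and boundary data are preserved. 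Third, correct the initial condition on a vanishing time layer so that $\rho^n_0=\gamma$ and $Q^n_0=K^n_0=0$, with dynamical cost and variation of $I_\gamma$ both $o(1)$. The diffusive part $I_1$ is controlled via the joint convexity of $(q,\rho)\mapsto(q+\tfrac{1}{2}\nabla\rho)^2/(2\sigma(\rho))$ exactly as in \cite{BDGJL2}, and one checks by dominated convergence that $I_1$ is continuous along the three approximation parameters when $\cQ(\rho)<\infty$.

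The main obstacle lies in the non-conservative part $I_2$, for which $\Phi(\rho,\kappa)$ is convex in $\kappa$ but not jointly in $(\rho,\kappa)$, so mollifying $\rho$ could a priori increase the cost. Worse, if $C(\rho)$ (respectively $A(\rho)$) vanishes on a region of positive measure, the integrand \eqref{eq: dissipative} degenerates to the one-sided functional of Theorem \ref{lowerreg}, and a generic mollification of $\dK$ would introduce values of the forbidden sign, driving $I_2$ to infinity. This is precisely where the two structural assumptions enter. Assumption (\textbf{L2}) --- monotonicity $A^{\prime}\ge 0$, $C^{\prime}\le 0$ --- combined with the fact that $\bar{\rho}_{\ast}$ is strictly between $0$ and $1$, ensures that after the interpolation step $A(\rho^\lambda)$ and $C(\rho^\lambda)$ are uniformly bounded below by a positive constant on the support of the corresponding sign of $\dK^\lambda$, ruling out the degenerate regime. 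Assumption (\textbf{L1}) --- concavity (or identical vanishing) of $A$ and $C$ --- together with Jensen's inequality yields $A(\iota_\epsilon\ast\rho^\lambda)\ge\iota_\epsilon\ast A(\rho^\lambda)$ and the analogous bound for $C$, which keeps the denominators in $\Phi$ away from zero under mollification and provides the convexity needed to push Jensen through the integrand $\Phi(\cdot,\dK^\lambda)$. A diagonal extraction over the three regularization parameters then produces the regular sequence along which $I(\rho^n,Q^n,K^n)\to I(\rho,Q,K)$, and Theorem \ref{lower} follows from Theorem \ref{lowerreg}.
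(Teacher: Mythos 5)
Your overall architecture (lower bound for regular trajectories plus an $I$-density approximation) is the same as the paper's, but the execution of the approximation has a genuine gap in the treatment of $I_2$. You propose to mollify the trajectory itself and to ``push Jensen through the integrand $\Phi(\cdot,\dK)$'' using the concavity of $A$ and $C$. This does not work as stated: $\Phi(\gr,\gk)=\sup_g\{\gk g - C(\gr)(e^g-1)-A(\gr)(e^{-g}-1)\}$ is convex in $\gk$, but at the optimal $g^*$ one has $\partial_C\Phi=-(e^{g^*}-1)$ and $\partial_A\Phi=-(e^{-g^*}-1)$, which have opposite signs, so $\Phi$ is neither monotone in $(C,A)$ nor jointly convex in $(\gr,\gk)$ (at $\gk=0$ it equals $(\sqrt{C}-\sqrt{A})^2$, which is not convex in $\gr$ for concave $A,C$). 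Hence the Jensen inequalities $A(\iota_\epsilon*\gr)\geq\iota_\epsilon*A(\gr)$ and $C(\iota_\epsilon*\gr)\geq\iota_\epsilon*C(\gr)$ do not transfer to $\Phi$. The paper circumvents this with the trick of \cite{JLV}: write $I_2(\gr,K)=\sup_G\tilde J^2_G(\gr,K)+\int_0^T \bra C(\gr_t)+A(\gr_t)\ket\,dt$, where $\tilde J^2_G$ has integrand $\dK G-C e^{G}-Ae^{-G}$, decreasing in $(C,A)$ and affine in $\dK$; under (\textbf{L1}) the resulting functional $\tilde I_0$ is jointly convex and lower semi-continuous, while the leftover term is continuous. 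Without this decomposition, your interpolation and mollification steps do not control $I_2$ from above.

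Two further points. First, you assign Assumption (\textbf{L2}) the role of keeping $A(\gr^\lambda),C(\gr^\lambda)$ positive; that already follows from (\textbf{L1}) (positivity and concavity on $]0,1[$) once the density is bounded away from $0$ and $1$. In the paper (\textbf{L2}) is used only to prove uniqueness of weak solutions of the controlled equation with the \emph{non-regular} drifts $H\in\bbL^2([0,T],\bbH_1(]-1,1[))$ and $|G|e^{|G|}\in\bbL^1$. That uniqueness is unavoidable in the paper's scheme because the spatial regularization does not mollify the trajectory (which would break the Dirichlet data $\gr(t,\pm1)=\bar\gr_\pm$; your ``reflection near $\pm1$'' preserves Neumann, not Dirichlet, conditions), but instead extracts the drifts $H,G$ from $(\gr,Q,K)$ via the Riesz representation and the pointwise equation for $\dK$, mollifies \emph{them}, and defines $(\gr^{(n)},Q^{(n)},K^{(n)})$ as the solutions of the controlled PDE with drifts $H_n,G_n$; identifying the weak limit of $\gr^{(n)}$ with $\gr$ is exactly where (\textbf{L2}) enters. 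If you insist on mollifying the trajectory directly, you must both repair the boundary behaviour and verify that the mollified path actually lies in the set $\cS$ of Definition \ref{def: cS} with regular drifts, neither of which is addressed in your sketch.
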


The concavity assumption $(\textbf{L1})$ has been introduced in \cite{JLV}.
As we shall see in Section \ref{sec: Approximation for general trajectories},
it  simplifies the proof of the lower bound, however it is mainly technical and Theorem \ref{lower} should be valid without assumption $(\textbf{L1})$.
We refer to \cite{QRV, BLM, FLM} for further results on this generalization in the case of conservative dynamics.
Assumption {\bf (L2)} will be used in the Appendix only to ensure the uniqueness of the weak solutions for singular perturbations of the hydrodynamic equation (\ref{limitehydro}).

\section{Modified dynamics and local equilibrium}
\label{sec: A super-exponential replacement lemma}

Local equilibrium lies at the heart of the hydrodynamic limit theory and it states that during the time evolution the local measure remains close to an equilibrium measure with a varying density. 
In this section, we state, in our framework, a strong form of local equilibrium which will be useful for the derivation of the hydrodynamic large deviations. The proofs are omitted as they follow the scheme introduced in \cite{KOV,KL,BDGJL1}.


\subsection{The modified dynamics}

We first  define a modification of the
original process (\ref{generateur}) which will be used to derive the large deviations.
For regular functions $G$ and $H$ on $[0,T]\times[-1,1]$, denote $L_N^{G_t,H_t}$ the time dependent
generator given by 
\begin{eqnarray}
\label{eq: modified dynamics}
L_N^{G_t ,H_t} =
\frac{N^2}{2}L_{0,N}^{H_t}+L_{1,N}^{G_t}+\frac{N^2}{2}L_{+,N}+\frac{N^2}{2}L_{-,N} \, ,
\end{eqnarray}
with
\begin{eqnarray*}
L_{0,N}^{H_t} f(\eta)&=&\sum_{x=-N}^{N-1}\left[f(\eta^{x, x+1})-f(\eta)\right]
\exp \Big[ (\eta(x)-\eta(x+1)) \Big(H(t,\frac{x+1}{N})-H(t,\frac{x}{N}) \Big) \Big] \, , \\
L_{1,N}^{G_t}f(\eta)
&=&\sum_{x=-N}^Nc(x,\eta) \left[\eta(x)
e^{-G(t,x/N)}+(1-\eta(x))e^{G(t,x/N)}\right] \left[f(\eta^x)-f(\eta)\right].
\end{eqnarray*}
The modified dynamics induces a weak drift in the conservative dynamics.
For large $N$, a particle jumps from $x$ to $x\pm1$ at rate
$\dfrac{1}{2}\left(1\pm\dfrac{1}{N} \partial_x
H\left(t,\frac{x}{N}\right)\right)$. $L_{1,N}^{G_t}$ is the generator
of a  non-conservative dynamics for which the
intensity of creation and annihilation varies in time and space according to $G$.
Finally, let $\p^N_{\gamma,G,H}$ be the probability measure associated to
the process with initial measure $\nu_\gamma^N$ and generator $L_N^{G_t ,H_t }$. 
We stress the fact that the reservoir dynamics are unchanged.
As in Theorem \ref{thm: limitehydro}, one can show that the modified dynamics follows the hydrodynamic limit
equation 
\begin{equation}
\label{eq: limit HG hydro}
\partial_t \rho (t,x) =\dfrac{1}{2}\Delta \rho (t,x) -\nabla \left( \gs \big( \rho(t,x) \big) \, \nabla H (t,x) \right)
+ C(\rho(t,x))e^{G(t,x)} - A(\rho(t,x))e^{-G(t,x)} \, .
\end{equation}

\subsection{Local equilibrium}
\label{sectionremplace}

We set $\Lambda_l^{(x)}=\{y\in\{-N,\dots,N\}\ \textrm{such that }|y-x|\leqslant l\}$
and define the local density as
\begin{equation}
\displaystyle{\bar \eta^l (x)= \frac{1}{|\Lambda_l^{(x)}|}\sum_{y\in\Lambda_l^{(x)}}\eta(y)} \, ,
\end{equation}
where $|\Lambda_l^{(x)}|$ stands for the number of sites in $\Lambda_l^{(x)}$.
Let $\psi$ be a cylindric function with support in $\{-R, \dots, R\}$.
Given $\varphi$ a regular function on $[0,T] \times [-1,1]$ and $\delta, \gep>0$, 
we define the set $B_{\delta,\gep,\varphi}(\psi)$ on the trajectories $\{\eta_s\}_{s \leq T}$ as 
\begin{equation}
B_{\delta,\gep,\varphi} (\psi)=\left\{
\{\eta_s\}_{s \leq T}, \qquad
\left|\int_0^T \, ds \, \frac{1}{2(N - R)}
\sum_{x = -N +R}^{N-R} \varphi \left( s, \frac{x}{N} \right) 
\left[\psi(\tau_x\eta_s)- \nu_{\overline{\eta}_s^{\epsilon N}(x)} (\psi)\right]\right|\leqslant\delta\right\} \, ,
\label{eq: B delta}
\end{equation}
where 
$\tau_x \eta$ is the configuration $\eta$ shifted by $x$.

\medskip

The local equilibrium property also holds for the modified dynamics \eqref{eq: modified dynamics}.
\begin{thm}
Given $\varphi$, $\psi$ and $\gd >0$, the trajectories concentrate super-exponentially fast on the set $B_{\delta,\gep,\varphi} (\psi)$
\begin{equation*}
\limsup_{\gep \to 0}
\limsup_{N\to\infty} \frac{1}{N}\log 
\p^N_{\gamma,G,H} \Big( B_{\delta,\gep,\varphi} (\psi)^c \Big)=-\infty \, .
\end{equation*} 
Moreover, the reservoirs impose local equilibrium at the boundaries with the densities $ \bar \rho_+,  \bar \rho_-$.
For any continuous function $\Phi$ in $[0,T]$
$$
\limsup_{N\to\infty}\dfrac{1}{N}\log \p^N_{\gamma,G,H}
\left[\Big|\int_0^T \, ds \,  \Phi(s)\big(\eta_s(\pm N) - \bar \rho_{\pm}\big)\Big|\geqslant\delta\right]=-\infty.
$$
\label{superexp}
\end{thm}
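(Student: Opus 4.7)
My plan is to follow the Kipnis--Olla--Varadhan replacement scheme of \cite{KOV,KL,BDGJL1}, adapted to the modified dynamics $L_N^{G_t,H_t}$. The argument splits into three steps: a Girsanov reduction from $\p^N_{\gamma,G,H}$ to the unperturbed process $\p^N_\gamma$, the standard one-block/two-block replacement for the bulk estimate, and a separate boundary argument using the $N^2$ speed-up of the reservoir generator.

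First I would reduce to $\p^N_\gamma$. The Radon--Nikodym derivative $Z_N = d\p^N_{\gamma,G,H}/d\p^N_\gamma$ is an exponential martingale whose logarithm is $O(N)$ uniformly on trajectories, because the SSEP rates are modified by a factor $\exp\bigl((\eta(x)-\eta(x+1))(H(t,(x+1)/N)-H(t,x/N))\bigr)=1+O(1/N)$ per edge (the $N^2$ speed-up is compensated by the $O(1/N)$ size of the perturbation), while the Glauber rates are modified by a bounded multiplicative factor. Consequently $\mathbb{E}^N_\gamma[Z_N^q]\le e^{C_q N}$ for every $q>1$, and H\"older's inequality transfers any super-exponential bound from $\p^N_\gamma$ to $\p^N_{\gamma,G,H}$. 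It therefore suffices to prove both statements under the original law $\p^N_\gamma$.

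For the bulk estimate, combining exponential Chebyshev and Feynman--Kac bounds $\p^N_\gamma(B_{\delta,\epsilon,\varphi}(\psi)^c)$ by $2 e^{-a\delta N}$ times the exponential of $TN$ times the variational quantity
\begin{equation*}
\sup_f\Bigl\{\bra W_s\, f^2\ket - \mathcal{D}_N(f,f)\Bigr\},
\end{equation*}
where $W_s(\eta)=\tfrac{1}{2(N-R)}\sum_x\varphi(s,x/N)\bigl[\psi(\tau_x\eta)-\nu_{\overline{\eta}^{\epsilon N}(x)}(\psi)\bigr]$ and $\mathcal{D}_N$ is the full Dirichlet form. The diffusive contribution $\tfrac{N^2}{2}$ in front of the SSEP Dirichlet form dominates the Glauber and boundary terms by a factor $N^2$, so the classical arguments of \cite{KL} apply: the one-block estimate replaces $\psi(\tau_x\eta)$ by $\tilde\psi(\overline{\eta}^l(x))=\nu_{\overline{\eta}^l(x)}(\psi)$ via equivalence of ensembles on boxes of fixed size $l$, and the two-block estimate identifies $\overline{\eta}^l(x)$ with $\overline{\eta}^{\epsilon N}(x)$. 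Taking limits in the order $N\to\infty$, $l\to\infty$, $\epsilon\to 0$, $a\to\infty$ yields the super-exponential bound.

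For the boundary local equilibrium, the same exponential Chebyshev plus Feynman--Kac machinery applied to $\int_0^T\Phi(s)(\eta_s(\pm N)-\bar\rho_\pm)\,ds$ reduces the question to a Rayleigh quotient. The boundary generator $\tfrac{N^2}{2}L_{\pm,N}$ acts on the single variable $\eta(\pm N)$ at rate $O(N^2)$, has unique invariant marginal $\nu_{\bar\rho_\pm}$ and spectral gap of order $N^2$, so the Rayleigh quotient is at most $C\|\Phi\|_\infty^2/N^2$ and one concludes. The main obstacle is the classical two-block estimate, which in principle requires a delicate coupling between a microscopic and a mesoscopic box; here it nevertheless goes through essentially unchanged, because once the Girsanov reduction is performed the $N^2$ diffusive Dirichlet form controls the non-conservative contributions of $L_{1,N}$ and $L_{\pm,N}$.
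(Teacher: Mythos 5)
The paper itself omits the proof of this theorem, deferring to the entropy-production bounds of \cite{KOV,KL} adapted to boundary terms as in \cite{BDGJL1}; your sketch reproduces exactly that standard machinery (Feynman--Kac plus exponential Chebyshev, one-block/two-block estimates penalized by the $N^2$ SSEP Dirichlet form, and a one-site argument at the reservoirs), so in substance you are taking the same route the authors intend. Two points deserve care, though. First, your justification of the moment bound $\mathbb{E}^N_\gamma[Z_N^q]\le e^{C_qN}$ is imprecise: $\log Z_N$ is \emph{not} uniformly $O(N)$ on trajectories, since the exponent of \eqref{radonnikodym} contains the terms $N\bra Q_T^N\,\nabla H_T\ket$ and $N\bra K_T^N\,G_T\ket$ and the integrated currents are unbounded; the correct argument is that $Z_N^q$ equals the Radon--Nikodym derivative of the $(qG,qH)$-perturbed process multiplied by a difference of compensators which \emph{is} uniformly $e^{O(N)}$, whence the moment bound follows from the martingale property. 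Second, your variational bound $\sup_f\{\bra W_s f^2\ket-\mathcal{D}_N(f,f)\}$ tacitly presupposes a reference measure with respect to which the generator is symmetric; for the boundary-driven dynamics no such invariant product measure exists, so one must work with a product measure $\nuN$ with a slowly varying profile and control the $O(N)$ error terms produced by the non-reversibility of $L_{0,N}$ and $L_{\pm,N}$ --- this is precisely the adaptation "as in \cite{BDGJL1}" that the paper alludes to, and it is the one step your sketch glosses over rather than resolves.
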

The derivation of Theorem \ref{superexp} follows from the bounds on entropy production \cite{KOV} which can be adapted to control the boundary terms as in \cite{BDGJL1}.

\section{Large deviation upper bound}
\label{sec: Large deviations upper bound}

The derivation of Theorem \ref{upper} is split into several steps. 
First an upper bound with the rate function $J$ \eqref{I0} is derived for compact sets, then for closed sets.
Finally, we prove that the rate function is infinite for the  trajectories which do not belong to the set $\rond{A}$ introduced in (\ref{relation}), (\ref{eq: energy}).

\subsection{The upper bound for compact sets}
\label{sectionderive}

In order to compare the original dynamics starting from the initial profile $\gamma$ to a modified dynamics 
with regular drifts $G,H$ \eqref{eq: modified dynamics} starting from the initial profile $\omega$,
we compute the Radon-Nikodym derivative \cite{KL}
\begin{eqnarray}
&& \dfrac{d\p^N_{\omega,G,H}}{d\p^N_\gamma}
=
\exp \bigg[N\Big\{
\bra Q_T^N \,\nabla H_T \ket - \int_0^T \, ds \, \bra Q_s^N \,\frac{d}{ds}
\nabla H_s \ket 
 +  \bra K_T^N \, G_T \ket - \int_0^T \, ds \, \bra K_s^N \,\frac{d}{ds}G_s \ket 
\nonumber \\
&& \qquad \qquad \qquad \qquad 
+O_H(\frac{1}{N}) \Big\} -\int_0^T R_N(s) ds +h_{\gamma,\omega}(\gr^N_0)  \bigg],
\label{radonnikodym}
\end{eqnarray} 
where
$h_{\gamma,\omega}(\rho) = \left \bra \rho \,\log(\omega/\gamma)\right \ket 
+ \left \bra (1-\rho) \, \log((1-\omega)/(1-\gamma))\right \ket$ 
and
\begin{eqnarray}
R_N(s)&=&\dfrac{N^2}{2}\Bigg(\sum_{x=-N}^{N-1}
\eta_s(x)(1-\eta_s(x+1)) 
\left[ \exp \left( H\left(s,\frac{x+1}{N}\right) - H\left(s,\frac{x}{N}\right) \right) -1\right] \nonumber\\
&&+\eta_s(x+1)(1-\eta_s(x)) \left[\exp \left( H\left(s,\frac{x}{N}\right)-H\left(s,\frac{x+1}{N} \right) \right)-1\right]
\Bigg)\nonumber\\
&&+\sum_{x=-N}^Nc(x,\eta_s)\left[ \exp \left(\left(1-2\eta_s(x)\right)
G(s,\frac{x}{N}) \right) -1\right].
\label{densite}
\end{eqnarray}
expanding the exponential and summing by parts, we get 
\begin{eqnarray*}
R_N(s)&=&
\dfrac{1}{2}\Bigg\{\sum_{x=-N}^{N-1}
\bigg[\eta_s(x) \Delta H \left(s,\frac{x}{N}\right) \\
&& + \dfrac{1}{4} \left(\nabla 
H \left(s, \frac{x}{N}\right)\right)^2 
\left[\eta_s(x) (1- \eta_s(x+1)) +\eta_s(x+1)(1 -\eta_s(x)) \right] \bigg] \Bigg\}\\
&& 
+\dfrac{1}{2}  \Big[ \eta_s(N)\nabla H(s,1)-\eta_s(-N)\nabla H(s,-1) \Big] \\
&&
+\sum_{x=-N}^Nc(x,\eta_s)\left[\exp \left( \left(1-2\eta_s(x)\right)G(s,\frac{x}{N}) \right) - 1\right]
+O(1) \, .
\end{eqnarray*}
Thanks to the local equilibrium, the microsopic expressions in $R_N(s)$ 
can be replaced by their averages. 
We set
\begin{eqnarray}
B_{\delta,\gep} = B_{\delta,\gep,(\nabla H)^2} \big( \eta_0 (1 - \eta_0) \big) 
\bigcap B_{\delta,\gep,({\rm e}^{G}-1)} \big( c(0,\eta) \eta_0 \big)
 \bigcap B_{\delta,\gep,({\rm e}^{-G}-1)} \big( c(0,\eta) (1-\eta_0) \big) 
 \bigcap B^\prime_{\gd,\pm} \, ,
\nonumber \\
\label{eq: B delata gep}
\end{eqnarray}
where the sets $B_{\delta,\gep,\varphi} (\cdot)$ were introduced in \eqref{eq: B delta}
and 
$$
B^\prime_{\gd,\pm} = \left\{ \Big|\int_0^T \, ds \,  \nabla H(s,\pm1) \, \big(\eta_s(\pm N)- \bar \rho_\pm \big)\Big| \leqslant \delta
\right\} \, .
$$

The super-exponential replacement Theorem \ref{superexp} implies
$$
\forall \delta>0,\qquad
\limsup_{\epsilon\to 0} \limsup_{N\to\infty} \dfrac{1}{N} \log 
\p^N_\gamma (B_{\delta,\gep}^c)=-\infty.
$$
For trajectories in $B_{\delta,\gep}$, the Radon-Nikodym derivative can be approximated as follows 
$$
\dfrac{d\p^N_{\omega,G,H}}{d\p^N_\gamma}=\exp\left[N\left(\J
(\rho^N,Q^N,K^N)+h_{\gamma,\omega}(\rho^N_0)+O_{G,H}(\frac{1}{N})+O(\delta)\right)\right] \, ,
$$
where we used the functional
\begin{equation}
\begin{array}{rcl}
\J(\rho,Q,K) &=& \bra Q_T \, \nabla H_T \ket 
- \int_0^T \, ds \, \bra Q_s \,\frac{d}{ds}\nabla H_s \ket  + \bra K_T \,G_T \ket 
-\int_0^T \, ds \,  \bra K_s \,\frac{d}{ds}G_s \ket  \\
&& -\frac{1}{2} \int_0^T \, ds \, \bra \rho_s \, \Delta H_s 
+  \sigma(\rho_s*i_{\epsilon}){\left(\nabla H_s\right)^2} \ket \\
&&-\int_0^T \, ds \, 
\left\bra C(\rho_s*i_{\epsilon})(e^{G_s}-1) + A(\rho_s*i_{\epsilon})(e^{-G_s}-1) \right \ket \\
&& + \frac{1}{2}\bar \gr_+\int_0^Tds\nabla H(s,1)-\frac{1}{2}\bar \gr_-\int_0^Tds\nabla H(s,-1)\, ,
\end{array}
\label{Jepsilon}
\end{equation}
with $A, C$ and $\gs$ as in (\ref{CetA}), \eqref{eq: conductivity}.
The function $i_\epsilon$ is an approximation of unity
\begin{equation*}
\rho_s^N*i_{\epsilon}=\overline{\eta}_s^{\epsilon N}.
\end{equation*}

\medskip

For any $\mathcal{O}$ open set in $\mathcal{E}$, we can then deduce the large deviation upper bound
\begin{eqnarray*}
\limsup_{N\to\infty} \dfrac{1}{N} \p_\gamma^N(\rond{O})
&\leqslant&
\max \bigg[\limsup_{N\to\infty}  \dfrac{1}{N}\log \p^N_\gamma \big( (\rho^N ,Q^N ,K^N ) \in \rond{O}\cap B_{\delta,\gep} \big) 
\, ; \,\\
&& \qquad \qquad \limsup_{N\to\infty}\dfrac{1}{N}\log \p^N_\gamma (B_{\delta,\gep}^c )\bigg]\\
&\leqslant&
\limsup_{N\to\infty} \dfrac{1}{N}\log
\p^N_{\omega,G,H} \left[\frac{d \p^N_\gamma}{d \p^N_{\omega,G,H}}
\, \mathbf{1}_{\{ \rond{O} \cap B_{\delta,\gep} \}} (\rho^N,Q^N,K^N)\right]\\
&\leqslant & 
\limsup_{N\to\infty}
\dfrac{1}{N}\log \p^N_{\omega,G,H} \left[e^{-N(\J ((\rho^N,Q^N,K^N))+h_{\gamma,\omega}(\rho^N_0)+O_{G,H}( \frac{1}{N})+O(\delta))} \,\mathbf{1}_\rond{O}\right]\\
&\leqslant&
\sup_{(\rho,Q,K)\in\rond{O}}
\; \Big\{ -\J(\rho,Q,K)-h_{\gamma,\omega}(\rho_0) \Big\}
+O(\delta).
\end{eqnarray*}
This is true for any $(\epsilon,G,H,\omega)$ and any $\delta>0$ so finally
\begin{equation}
\limsup_{N\to\infty} \dfrac{1}{N} \p^N_\gamma (\rond{O})
\leqslant
\inf_{\epsilon,G,H,\omega}\sup_{(\rho,Q,K)\in\rond{O}}
\; \Big\{-\J(\rho,Q,K)-h_{\gamma,\omega}(\rho_0) \Big\}.
\label{eq: upper ouvert}
\end{equation}

The previous bound can then be extended to any compact set $\cK$ by using a finite covering with open sets (see \cite{KL})
\begin{eqnarray*}
 \limsup_{N\to\infty} \dfrac{1}{N}\log \p^N_\gamma (\rond{K})
\leqslant - \sup_{(\rho,Q,K)\in\rond{K}} \;  \sup_{G,H} \{ J_{G,H} (\rho,Q,K) \} = - J (\rho,Q,K) .
\end{eqnarray*}

\subsection{The upper bound for closed sets} 
\label{fermes}

We are going to prove the exponential tightness, i.e. to exhibit a sequence $\{ \rond{K}_n \}$ of compact sets in $\rond{E}$ such that for any $n$
\begin{eqnarray}
\label{eq: exp tight}
\limsup_{N\to\infty} \dfrac{1}{N}\log \p^N_\gamma (\rond{K}_n^c) \leqslant -n.
\end{eqnarray}
The large deviation upper bound will then follow for general closed sets $F$ of $\rond{E}$ by noticing
\begin{eqnarray*}
 \limsup_{N\to\infty} \; \dfrac{1}{N} \log \p^N_\gamma (F) 
& \leqslant & \limsup_{N\to\infty} \; \dfrac{1}{N}\log\left[ \p^N_\gamma
\left(F\cap\rond{K}_n\right)+\p^N_\gamma (\rond{K}_n^c)\right]\\
&\leqslant& 
\max\left[\sup_{(\rho,Q,K) \in F\cap\rond{K}_n} \; J (\rho,Q,K)  \;  ;\;  -n\right].
\end{eqnarray*}
Letting $n$ go to $\infty$ completes the upper bound for closed sets.

\medskip

In order to build a sequence of compact sets, we need to check first  that the measures concentrate on equi-continuous trajectories
\begin{lem} 
\label{lemcontinuite}
Let $\phi$ be a $C^1$ function on $[-1,1]$. Then we have for all $\epsilon>0$,
\begin{eqnarray}
\lim_{\delta\to 0}\limsup_{N\to\infty}\dfrac{1}{N}
\log \p^N_\gamma
\left[\sup_{|t-s|\leqslant\delta}
\big|\left<\rho_t^N \, \phi \right>
- \left<\rho_s^N \, \phi\right> \big| > \epsilon\right]
&=&-\infty \, ,
\label{rho}\\
 \lim_{\delta\to 0}\limsup_{N\to\infty}\dfrac{1}{N}\log \p_\gamma^N\left[\sup_{|t-s|\leqslant\delta}
\big|\left<Q_t^N \phi \right> - \left<Q_s^N \, \phi \right>\big| > \epsilon\right]
&=&-\infty \, ,
\label{W}\\
\lim_{\delta\to 0}\limsup_{N\to\infty}\dfrac{1}{N}\log
\p_\gamma^N \left[\sup_{|t-s|\leqslant\delta}
\big|\left<K_t^N \phi \right> - \left<K_s^N \, \phi\right>\big|>\epsilon\right]
&=&
-\infty \, ,
\label{K}
\end{eqnarray}
where the supremum is taken over $s,t$ in $[0,T]$.
\end{lem}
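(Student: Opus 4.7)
The plan is to handle \eqref{rho}, \eqref{W} and \eqref{K} by a uniform scheme: decompose each process as a Lipschitz-in-time predictable part plus a pure-jump mean-zero martingale, obtain a Gaussian-type bound on the conditional Laplace transform of the martingale increments, and then pass from a grid to the continuous-time supremum via Doob's maximal inequality.

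First, by Dynkin's formula, for each $\cX_t^N \in \{\bra \rho_t^N,\phi \ket, \bra Q_t^N,\phi \ket, \bra K_t^N,\phi \ket\}$ I write $\cX_t^N = C_t^{\cX,\phi} + M_t^{\cX,\phi}$, with $C^{\cX,\phi}$ predictable and absolutely continuous and $M^{\cX,\phi}$ a mean-zero pure-jump martingale. A direct computation from \eqref{generateur} shows that the density $\partial_t C^{\cX,\phi}$ is bounded uniformly in $\eta$ by a constant $K_\phi$ depending only on finitely many sup-norms of $\phi$: for $\bra \rho^N,\phi \ket$ this follows from the discrete Laplacian applied to $\phi$ plus the bounded Glauber rates $c(x,\eta)$; for $\bra Q^N,\phi \ket$ the density equals $\frac{1}{2}\sum_{x=-N}^{N-1}\phi(x/N)(\eta_s(x)-\eta_s(x+1))$, controlled by $C\|\phi'\|_\infty$ after a summation by parts; for $\bra K^N,\phi \ket$ the density equals $\frac{1}{N}\sum_x\phi(x/N)c(x,\eta_s)(1-2\eta_s(x))$, bounded by $\|c\|_\infty\|\phi\|_\infty$. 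Hence $|C^{\cX,\phi}_t-C^{\cX,\phi}_s|\leq K_\phi\delta$ deterministically, and this part of the increment is harmless once $\delta<\epsilon/(2K_\phi)$.

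Second, for $\alpha\in\rr$ I introduce the exponential martingale
\begin{equation*}
\cE_t^\alpha = \exp\bigl(\alpha N\,\cX_t^N - \Gamma_t^\alpha\bigr),\qquad \Gamma_t^\alpha = \int_0^t e^{-\alpha N\cX_s^N}\,L_N\,e^{\alpha N\cX_s^N}\,ds.
\end{equation*}
Taylor-expanding the exponentials in $\Gamma^\alpha$ to second order, and using that the jumps of $\cX^N$ are of size $O(1/N)$ for $\rho^N, K^N$ and of size $O(1/N^2)$ for $Q^N$ (against jump rates of order $1$ and $N^2$ respectively), I obtain the uniform bound $\Gamma_t^\alpha-\Gamma_s^\alpha\leq\alpha N(C_t^{\cX,\phi}-C_s^{\cX,\phi})+C_\phi\alpha^2 N(t-s)$ for $|\alpha|\leq\alpha_0$; combined with $\bbE[\cE_t^\alpha/\cE_s^\alpha\mid\cF_s]=1$ this gives the conditional Gaussian bound $\bbE[\exp(\alpha N(M^{\cX,\phi}_t-M^{\cX,\phi}_s))\mid\cF_s]\leq\exp(C_\phi\alpha^2 N(t-s))$. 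On the grid $t_k=k\delta$, any $s,t$ with $|s-t|\leq\delta$ satisfy $|M_t-M_s|\leq 2\max_k\sup_{u\in[t_k,t_{k+1}]}|M_u-M_{t_k}|$; Doob's maximal inequality applied to the nonnegative submartingale $\exp(\pm\alpha N(M^{\cX,\phi}_u-M^{\cX,\phi}_{t_k}))$, optimized at $\alpha=\epsilon/(8C_\phi\delta)$, gives a per-interval bound $2\exp(-N\epsilon^2/(64C_\phi\delta))$, and a union bound over the $T/\delta$ intervals yields $\frac{1}{N}\log\p^N_\gamma[\sup_{|s-t|\leq\delta}|M^{\cX,\phi}_t-M^{\cX,\phi}_s|>\epsilon/2]\leq\frac{1}{N}\log(CT/\delta)-\epsilon^2/(64C_\phi\delta)$, which tends to $-\infty$ as first $N\to\infty$ and then $\delta\to 0$.

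The main technical point is the second-order expansion of $\Gamma^\alpha$ for the conservative current $\bra Q^N,\phi \ket$: because of the diffusive speeding-up by $N^2$, the first-order term produced by the SSEP generator must cancel exactly with $\alpha N\,\partial_tC^{Q,\phi}$ via a summation by parts that converts $\phi(x/N)-\phi((x-1)/N)$ into a discrete derivative at scale $1/N$, and the quadratic term must remain of order $\alpha^2 N$ rather than $\alpha^2 N^2$. The boundary reservoir jumps at $\pm N$ contribute bounded extra terms absorbed into the constants $K_\phi$ and $C_\phi$.
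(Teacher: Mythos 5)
Your strategy --- an exponential martingale for each observable (equivalently, the Radon--Nikodym derivative \eqref{radonnikodym} of a weakly perturbed dynamics), a second-order expansion of its compensator, and Doob's maximal inequality over a grid of mesh $\delta$ --- is the same as the paper's, which uses the martingale $\rond{M}_T$ with $\nabla H=\frac aN\phi$, $G=0$ for \eqref{W} and the Poissonian martingale $\rond{N}_t$ for \eqref{K}. But two of your quantitative claims do not hold as stated. For $K^N$ the compensator of $\exp(\alpha N\bra K^N_t\,\phi\ket)$ is $\int_0^t\sum_xc(x,\eta_s)\big(e^{\alpha(1-2\eta_s(x))\phi(x/N)}-1\big)\,ds$, which is of order $Ne^{|\alpha|\,\|\phi\|_\infty}(t-s)$ and not $\alpha^2N(t-s)$; your Gaussian bound is therefore only available for $|\alpha|\le\alpha_0$, which is incompatible with the optimization $\alpha=\epsilon/(8C_\phi\delta)\to\infty$. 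This is repaired by the paper's order of limits: fix $\alpha$, obtain $\frac1N\log\p^N_\gamma[\cdots]\le-\alpha\epsilon/2+C\big(e^{\alpha\|\phi\|_\infty}+1\big)\delta$, let $\delta\to0$ and only then $\alpha\to\infty$.

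The genuine gap concerns \eqref{rho}. The reservoir generators in \eqref{generateur} carry the factor $N^2/2$, and a flip of $\eta(\pm N)$ moves $\bra\rho^N\,\phi\ket$ by $(1-2\eta(\pm N))\phi(\pm1)/N$. Hence the compensator of $\bra\rho^N_t\,\phi\ket$ contains the term $\frac{N}{2}\phi(1)\big[\ggab_+(1-\eta_s(N))-\eta_s(N)\big]$ (and its analogue at $-N$), which is of order $N$, while the second-order contribution of the boundary jumps to $\Gamma^\alpha$ is of order $\alpha^2N^2(t-s)$. When $\phi(\pm1)\neq0$, both your deterministic bound $|C^{\rho,\phi}_t-C^{\rho,\phi}_s|\le K_\phi\delta$ and the bound $\exp(C_\phi\alpha^2N(t-s))$ fail, so the boundary terms cannot be ``absorbed into the constants.'' You need an additional reduction: for instance, prove \eqref{rho} for test functions vanishing at $\pm1$ (for which the reservoir flips do not move $\bra\rho^N\,\phi\ket$ at all) and pass to general $\phi$ using $0\le\eta\le1$, which gives $|\bra\rho^N_t\,(\phi-\phi_0)\ket|\le\|\phi-\phi_0\|_{L^1}+o(1)$ uniformly in $t$; alternatively, control the net number of boundary flips through the identity \eqref{eq: identite micro} at $x=\pm N$ together with an estimate on $Q^N_t(\pm(N-1))$. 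The paper sidesteps this point by citing \cite{KL} for \eqref{rho}; your argument, as written, does not cover it.
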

Second, we need estimates on the total variation norm of the empirical currents  
\begin{lem} 
\label{lem: compacite}
For any time $T >0$, one has
\begin{eqnarray}
\label{eq: tension K 0}
\lim_{a \to \infty} \lim_{N \to \infty} \; \frac{1}{N}
\log \p^N_\gamma  \left ( \sup_{0 \leq t \leq T} \; \frac{1}{N} \sum_{x = -N}^N \big| K_t^N (x)  \big| \geq a  
\right) = - \infty \, ,
\end{eqnarray}
\begin{eqnarray}
\label{eq: tension Q 0}
\lim_{a \to \infty} \lim_{N \to \infty} \; \frac{1}{N}
\log \p^N_\gamma  \left ( \sup_{0 \leq t \leq T} \;  \frac{1}{N^2} \sum_{x = -N}^N \big| Q_t^N (x)  \big| \geq a 
\right) = - \infty \, .
\end{eqnarray}
\end{lem}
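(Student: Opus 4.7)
The plan is to establish each estimate via an exponential Chebyshev bound applied to a carefully-chosen exponential supermartingale, together with a union bound over a $\{\pm 1\}$-valued family of test functions for the total-variation norms.

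Estimate \eqref{eq: tension K 0} is the simpler of the two: $\sum_x|K^N_t(x)|$ is dominated by the total number of bulk Glauber events during $[0,T]$, a counting process with rate at most $(2N+1)\|c\|_\infty$. An exponential Chebyshev bound on the Poisson-dominating process, together with Doob's maximal inequality for the supremum over $t\in[0,T]$, gives
\[
\p^N_\gamma\Bigl(\sup_{t\leq T}\tfrac{1}{N}\sum_x|K^N_t(x)|\geq a\Bigr)\leq \inf_{\lambda>0}\exp\bigl(-\lambda aN+2\|c\|_\infty (2N+1) T(e^\lambda-1)\bigr).
\]
Optimizing $\lambda$ of order $\log a$ produces a Chernov rate of order $a\log a$, so the bound tends to $-\infty$ as $a\to\infty$ after division by $N$.

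For \eqref{eq: tension Q 0}, I would use the dual representation
\[
\tfrac{1}{N^2}\sum_x|Q^N_t(x)|=\max_{\epsilon\in\{-1,+1\}^{2N}}\tfrac{1}{N^2}\sum_x\epsilon_x Q^N_t(x)
\]
and bound each of the $2^{2N}$ probabilities. For fixed $\epsilon$ and $\lambda>0$, the process
\[
M^{\epsilon,\lambda}_t=\exp\Bigl(\lambda\sum_x\epsilon_xQ^N_t(x)-\int_0^t\Phi^{\epsilon,\lambda}(\eta_s)\,ds\Bigr)
\]
is a positive $\p^N_\gamma$-martingale, with compensator admitting the decomposition
\[
\Phi^{\epsilon,\lambda}(\eta)=\tfrac{N^2}{2}(\cosh\lambda-1)\sum_x\bigl[\eta(x)+\eta(x+1)-2\eta(x)\eta(x+1)\bigr]+\tfrac{N^2}{2}\sinh\lambda\sum_x\epsilon_x\bigl[\eta(x)-\eta(x+1)\bigr].
\]
The even part is deterministically bounded by $N^3(\cosh\lambda-1)$. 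For the odd part, summation by parts rewrites the inner sum as $\sum_x(\epsilon_x-\epsilon_{x-1})\eta(x)$ plus boundary terms; the super-exponential local equilibrium of Theorem~\ref{superexp} then lets one replace $\eta(x)$ by its hydrodynamic value (up to a super-exponentially negligible event), and a chaining over $\epsilon$ at mesoscopic scales reduces the remaining worst-case magnitude to something of order $\|\partial\bar\rho\|_\infty$. Choosing $\lambda=c/N$ with $c>0$ matches the $N^2$-speedup, and exponential Chebyshev combined with Doob's maximal inequality produces a per-pattern bound of the form $\exp(-N(ca-C_1Tc^2-C_2Tc))$. Optimizing $c$ and adding the $2\log 2$ union-bound contribution yields a per-sum rate tending to $-\infty$ at quadratic speed $-a^2/(C'T)$ as $a\to\infty$.

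The main obstacle is the odd-part compensator: without the local-equilibrium replacement, its worst-case (alternating $\epsilon$) magnitude is $O(N^3\sinh\lambda)$, which would prevent the tilt $\lambda\sim 1/N$ from closing the Chebyshev estimate. Theorem~\ref{superexp} is therefore essential: it allows both the exclusion kernel $\eta(x)(1-\eta(x+1))$ and the lattice difference $\eta(x)-\eta(x+1)$ to be replaced by their hydrodynamic counterparts (bounded respectively by $\sigma(\bar\rho)\leq 1/4$ and by a smooth gradient of order $1/N$), reducing the compensator to the correct scale $O(N)$. The quadratic-in-$a$ per-pattern rate then defeats the $O(1)$ union-bound cost once $a$ is large.
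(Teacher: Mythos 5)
Your argument for \eqref{eq: tension K 0} is fine and is exactly the paper's: $\sum_x|K^N_t(x)|$ is dominated by the total number of Glauber events, a counting process of rate $O(N)$, and a Chernov bound gives a rate of order $a\log a$ after division by $N$.

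The argument for \eqref{eq: tension Q 0}, however, has a genuine gap at precisely the point you flag as the ``main obstacle''. After summation by parts the odd part of the compensator is $\tfrac{N^2}{2}\sinh\lambda\sum_x(\epsilon_x-\epsilon_{x-1})\eta(x)$, and for a $\{\pm1\}$-valued pattern the increments $\epsilon_x-\epsilon_{x-1}$ take values in $\{0,\pm2\}$: there is no factor $1/N$ to be gained from smoothness. Your proposed rescue via Theorem \ref{superexp} does not apply here, for two reasons. First, that theorem is stated for a \emph{fixed regular} test function $\varphi$ on $[0,T]\times[-1,1]$ and its proof (block averaging plus entropy bounds) uses that $\varphi$ is essentially constant on mesoscopic blocks; an arbitrary sign pattern, e.g.\ the alternating one, is not of this form, and you would in any case need the replacement \emph{uniformly} over $2^{2N}$ patterns, which the theorem does not provide. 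Second, even granting a replacement, the local-equilibrium value of the single-site difference $\eta(x)-\eta(x+1)$ is not ``a smooth gradient of order $1/N$'': local equilibrium controls spatial averages against slowly varying weights, not individual lattice differences, whose fluctuations remain of order $1$. What is actually true is that for rapidly oscillating $\epsilon$ the quantity $\sum_x\epsilon_xQ^N_t(x)$ is itself small (consecutive $Q^N_t(x)$ differ by at most $1+|K^N_t(x)|$), while for slowly varying $\epsilon$ the compensator is controllable; your proposal would need to split the patterns accordingly, and the unspecified ``chaining over $\epsilon$ at mesoscopic scales'' is exactly the missing step.

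The paper sidesteps all of this with the microscopic conservation identity \eqref{eq: identite micro}, $Q^N_t(x-1)-Q^N_t(x)=\eta_t(x)-\eta_0(x)-K^N_t(x)$, which (on the event already controlled by \eqref{eq: tension K 0}) gives $\sup_x|Q^N_t(x)|\le |Q^N_t(-N)|+2N(1+a)$ deterministically. This reduces the total-variation bound to controlling the single scalar $Q^N_t(-N)$, which by the same identity is comparable to the \emph{signed} sum $\tfrac1{N^2}\sum_xQ^N_t(x)$, i.e.\ a pairing of $Q^N$ with a fixed smooth test function; that is handled by the exponential martingale estimate of \eqref{eq: martingale} with a constant drift of slope $a/N$. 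I would suggest you adopt this route: it replaces the union bound over sign patterns by a single deterministic Lipschitz estimate on the profile $x\mapsto Q^N_t(x)$.
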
 
Should the currents be bounded (as the density), then Lemma \ref{lemcontinuite} would be enough to ensure the exponential tightness \cite{KL}.

\medskip

We postpone the derivation of the Lemmas and conclude the proof of the exponential tightness \eqref{eq: exp tight}.
We focus on  the conservative current as the same strategy applies to $\gr, K$.
Fix a sequence $\phi_l$ of $C^2$ functions on $[-1,1]$ dense in $C([-1,1])$. 
We set
$$
\rond{C}_{l,\delta,m}=\left\{Q \in D([0,T],  \cM )\ ;\qquad \sup_{|t-s|\leqslant \delta}\bigg|
\bra Q_t \, \phi_l \ket - \bra Q_s\, \phi_l \ket \bigg|\leqslant\dfrac{1}{m}\right\} \, .
$$ 
Using Lemma \ref{lemcontinuite}, we have,
$$
\forall l\geqslant 0,\ \forall m\geqslant 1, \forall n\geqslant 1, \exists \delta(n,m,l),
\qquad  
\p^N_\gamma ( Q^N \notin \rond{C}_{l,\delta(n,m,l),m})\leqslant
\exp (-Nnml) \, .
$$ 
We introduce also
$
\cC^\prime_n = \left\{ Q \in D([0,T],  \cM )\ ; \quad  \sup_{t \leqslant T} \big| Q_t \big| \leqslant k(n) \right\},  
$
where $|Q_t|$ stands for the  total variation norm of the measure $Q_t$ and $k(n)$ is chosen, according to
Lemma \ref{lem: compacite}, such that 
$$
\p^N_\gamma ( Q^N \notin \cC^\prime_n) 
\leqslant \exp ( -Nn) \, .
$$

Now consider 
\begin{eqnarray}
\label{eq: compact set}
\rond{K}_n=\bigcap_{l\geqslant0,\ m\geqslant1}\rond{C}_{l,\delta(n,m,l),m}
\; \bigcap \; \cC^\prime_n  \, .
\end{eqnarray}
Ascoli Theorem (see  \cite{EK} Theorem 6.3 page 123) implies that $\rond{K}_n$ is a compact set.
Combining the previous estimates we see that $\cK_n$ satisfies \eqref{eq: exp tight}.


\medskip

\begin{proof}[Proof of Lemma \ref{lemcontinuite}]
We start by proving  (\ref{W})  and follow the strategy of \cite{BDGJL2}. 
It is enough to show that the expression below goes to $-\infty$ as $\gd$ vanishes
$$
\max_{0\leqslant k\leqslant \frac{T}{\delta}}
\limsup_{N\to\infty}\dfrac{1}{N}
\log \p^N_\gamma \left[\sup_{k\delta\leqslant t\leqslant(k+1)\delta}
\left<Q_t^N \ \phi \right> - \left<Q_{k\delta}^N \, \phi \right> > \epsilon \right].
$$
Thanks to the computation of the Radon-Nikodym derivative of the modified
dynamics (\ref{densite}) (with $ \nabla H = \frac{1}{N} \phi , G=0$),
 we know from \eqref{radonnikodym} that for all $a>0$,
\begin{eqnarray*}
\rond{M}_T &=&\exp\Bigg\{N \, \bra Q_T^N \, a \phi \ket 
-\dfrac{1}{2} \int_0^T \, ds \, \sum_{x=-N}^{N-1} \Bigg(\eta_s(x)a \, \nabla \phi
\left(\frac{x}{N}\right)  \\
&& + \dfrac{1}{4}\left[a \phi \left(\frac{x}{N}\right)\right]^2
\left[\eta_s(x) (1 - \eta_s(x+1)) + \eta_s(x+1 ) (1 - \eta_s(x)) \right] \Bigg)\\
&& + N \big[ \eta_s(N) \phi(1)-\eta_s(-N)\phi(-1) \big] +O(1) \Bigg\}
\end{eqnarray*}
is a mean one positive martingale. One easily checks that the integral term
 is bounded above by $C_ \phi a^2N T$ (we will take the limit as $a$ goes to infinity),
  where $C_\phi $ is a constant depending only on $\phi$.
 Therefore, multiplying by $aN$, adding and substracting the integral term of the logarithm of the martingale and exponentiating,
\begin{equation}
\label{eq: martingale}
\p^N_\gamma \left[
\sup_{k\delta\leqslant t\leqslant(k+1)\delta}
\bra Q_t^N \,\phi \ket -\bra Q_{k\delta}^N \, \phi \ket > \epsilon
\right]
\leqslant \p^N_\gamma \left[\sup_{k\delta\leqslant t\leqslant(k+1)\delta}
\dfrac{\rond{M}_t}{\rond{M}_{\delta k}}>\exp\left\{\epsilon a N - C_\phi a^2N\delta\right\}\right].
\end{equation}
Taking $\delta$ small enough such that $C_\phi a^2N\delta<\frac{\epsilon a N}{2}$,
 then Doob's inequality implies that the last expression is bounded above
 by $\exp[- aN \epsilon / 4]$. Letting $a$ go to $\infty$ completes the proof.

For the non-conservative current, (\ref{K}) follows in the same way by using the martingale 
\begin{equation*}
\rond{N}_t=\exp\Bigg[ \bra a \phi \, K_T^N \ket - \int_0^T \, ds \, \sum_{x= -N}^N
c(x,\eta_s)\left[e^{a(1-2\eta_s(x)) \phi(x/N)}-1\right] \Bigg] \, .
\end{equation*}
The proof of (\ref{rho}) can be found in \cite{KL}.
\end{proof}

\medskip

\begin{proof}[Proof of Lemma \ref{lem: compacite}]
The probability of the event in \eqref{eq: tension K 0} can be estimated from above by the 
large deviations of $(2N+1)$ independent Poisson processes thus
\begin{eqnarray}
\label{eq: tension K}
\forall a >0, \qquad
\p^N_\gamma  \left ( \sup_{0 \leq t \leq T} \;  \frac{1}{N} \sum_{x = -N}^N \big| K_t^N (x)  \big| \geq a 
\right)
\leqslant  \exp(- N C_a ) \, , 
\end{eqnarray}
where $C_a$ goes to infinity as $a$ diverges. 

\medskip

We turn now to the bound \eqref{eq: tension Q 0} and show that 
\begin{eqnarray}
\label{eq: tension Q}
\forall a >0, \qquad
\p^N_\gamma  \left ( \sup_{0 \leq t \leq T} \;  \frac{1}{N^2} \sum_{x = -N}^N \big| Q_t^N (x)  \big| \geq a \right)
\leqslant  e^{- N C^\prime_a } \, , 
\end{eqnarray}
where $C^\prime_a$ goes to infinity as $a$ diverges.
To prove \eqref{eq: tension Q}, we first use a microscopic identity (which holds at any time)
\begin{eqnarray}
\label{eq: identite micro}
\forall x \in \{-N,N\}, \qquad
Q_t^{N}(x-1) - Q_t^{N}(x)  = \eta^N_t(x) - \eta^N_0(x) -  K^N_t(x)  \, .
\end{eqnarray}
Therefore \eqref{eq: tension K} implies that with probability at least $1- e^{- N C_a}$, the conservative current through the edge $(x,x+1)$ satisfies
\begin{eqnarray*}
\forall t \in [0,T], \qquad
| Q_t^{N}(x)|  \leq  |Q_t^{N}( -N)| + 2 N (1 + a ) \, .  
\end{eqnarray*}
From this, we get
\begin{eqnarray}
&& \p^N_\gamma  \left ( \sup_{0 \leq t \leq T} \;  \frac{1}{N^2} \sum_{x = -N}^{N-1} \big| Q_t^N (x)  \big| \geq 5 a  \right)
\leqslant  e^{- N C_a } + \p^N\left (  \sup_{0 \leq t \leq T} \;  \frac{\big| Q_t^N (-N)  \big|}{N}   \geq 3 a -2   \right)
\, ,  \nonumber \\
&& \leqslant  e^{- N C_a t} + \p^N_\gamma \left (  \sup_{0 \leq t \leq T} \;  \frac{ Q_t^N (-N) }{N}   \geq 3 a -2   \right)
+ \p^N_\gamma  \left ( \inf_{0 \leq t \leq T} \; \frac{Q_t^N (-N)  }{N}   \leq - 3 a  + 2  \right) \, . \nonumber \\
\label{eq: borne intermediaire}
\end{eqnarray}
We bound now the first term on the RHS (the second one can be bounded similarly by symmetry).
Using again the identity \eqref{eq: identite micro}, we get
\begin{eqnarray*}
&& \p^N_\gamma  \left ( \sup_{0 \leq t \leq T} \;  \frac{Q_t^N (-N)  }{ N}   \geq 3 a -2   \right) 
\leq
\p^N_\gamma \left ( \sup_{0 \leq t \leq T} \;  \frac{1}{ N^2} \sum_{x = -N}^{N-1}  Q_t^N (x)  \geq  a - 4  \right) +  e^{- N C_a t}
\, .
\end{eqnarray*}
The terms in the RHS can be estimated as in \eqref{eq: martingale} by reducing to a martingale estimate.
This completes \eqref{eq: tension Q}.
\end{proof}

\subsection{The set $\cA$}

To complete the derivation of the upper bound, we prove that the trajectories concentrate exponentially fast on the set $\cA$ introduced in (\ref{relation}), (\ref{eq: energy}).

\medskip

\noindent
{\it The conservation law.}

Let $C^2_0([-1,1])$ be the set of twice differentiable functions vanishing at the boundary.
\begin{lem}
 For any $\phi \in C^2_0([-1,1])$, we introduce
\begin{eqnarray*}
&& 
V_T(\rho^N,Q^N,K^N,\phi)
=
\dfrac{1}{N}\sum_{x=-N+1}^{N-1} \phi(\frac{x}{N}) \big( \eta_T(x)
- \eta_0(x) \big) 
-
\dfrac{1}{N} \nabla \phi(\frac{x}{N}) \, Q_T^{N}(x) 
+ \phi(\frac{x}{N}) K_T(x) \, .
\end{eqnarray*}
Then for any $\delta>0$,
$$
\limsup_{N\to\infty}\dfrac{1}{N}\log \p^N_{\gamma}\left[|V_T(\rho^N,Q^N,K^N,\phi)| > \delta\right]=-\infty.
$$
\label{lemrelation}
\end{lem}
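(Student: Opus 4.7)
The proof hinges on the pathwise (deterministic) microscopic conservation identity \eqref{eq: identite micro}: for each interior site $x$, $\eta_T(x)-\eta_0(x)=Q_T^N(x-1)-Q_T^N(x)+K_T^N(x)$. This is not a martingale statement; it simply book-keeps the fact that the particle number at $x$ can change only through the currents on its two edges or through creation/annihilation at $x$. The plan is to show that, modulo explicit error terms controlled by Lemma~\ref{lem: compacite}, $V_T$ is identically zero.

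First I would multiply \eqref{eq: identite micro} by $\phi(x/N)$, sum over $x\in\{-N+1,\dots,N-1\}$, and perform a discrete summation by parts on the $Q$-contribution. Using $\phi(\pm 1)=0$, this produces the exact algebraic identity
$$
\sum_{x=-N+1}^{N-1}\phi(\tfrac{x}{N})[\eta_T(x)-\eta_0(x)-K_T^N(x)]=\sum_{x=-N+1}^{N-2}\bigl[\phi(\tfrac{x+1}{N})-\phi(\tfrac{x}{N})\bigr]Q_T^N(x)+\phi(\tfrac{-N+1}{N})Q_T^N(-N)-\phi(\tfrac{N-1}{N})Q_T^N(N-1).
$$
Since $\phi\in C^2_0$, Taylor expansion gives $\phi((x+1)/N)-\phi(x/N)=\tfrac{1}{N}\nabla\phi(x/N)+O(\|\phi''\|_\infty/N^2)$, and $\phi(\pm 1)=0$ forces $|\phi((\pm N\mp 1)/N)|\leq \|\nabla\phi\|_\infty/N$. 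Substituting and dividing by $N$ yields an \emph{exact} decomposition
$$
V_T(\rho^N,Q^N,K^N,\phi)=\mathcal{R}^{\mathrm T}_T+\mathcal{R}^{\mathrm{bdry}}_T,
$$
with $|\mathcal{R}^{\mathrm T}_T|\leq \tfrac{C(\phi)}{N^3}\sum_{x}|Q_T^N(x)|$ and $|\mathcal{R}^{\mathrm{bdry}}_T|\leq \tfrac{C(\phi)}{N^2}\bigl(|Q_T^N(-N)|+|Q_T^N(N-1)|\bigr)$.

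The remaining work is to control both remainders on a super-exponentially likely event. The Taylor piece is immediate from \eqref{eq: tension Q 0}: on the event $\{\tfrac{1}{N^2}\sum_x|Q_T^N(x)|\leq a\}$, whose complement has $\p^N_\gamma$-probability at most $e^{-NC'_a}$, we obtain $|\mathcal{R}^{\mathrm T}_T|\leq C(\phi)a/N$. For the boundary remainder the total-variation bound alone does not help, since it does not control a single edge; I would therefore re-use the pathwise identity by telescoping it from an arbitrary $y\in\{-N+1,\dots,N-1\}$ down to $-N$,
$$
Q_T^N(-N)=Q_T^N(y)+\sum_{x=-N+1}^{y}\bigl[\eta_T(x)-\eta_0(x)-K_T^N(x)\bigr],
$$
and then averaging over $y$. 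Combined with both parts of Lemma~\ref{lem: compacite} this yields $|Q_T^N(-N)|\leq \tfrac{1}{2N-1}\sum_x|Q_T^N(x)|+O(N)+\sum_x|K_T^N(x)|=O(aN)$, and likewise for $Q_T^N(N-1)$, so $|\mathcal{R}^{\mathrm{bdry}}_T|\leq C(\phi)a/N$.

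Putting the two estimates together, on a good event of $\p^N_\gamma$-probability at least $1-e^{-N\min(C_a,C'_a)}$ we have $|V_T|\leq C(\phi)a/N<\delta$ for $N$ large enough. Consequently $\limsup_{N\to\infty}\tfrac{1}{N}\log\p^N_\gamma(|V_T|>\delta)\leq-\min(C_a,C'_a)$, and letting $a\to\infty$ finishes the proof. The one genuinely non-obvious step is controlling the boundary currents $Q_T^N(\pm N)$: because $\nabla\phi$ is not required to vanish at $\pm 1$, the boundary evaluations do not disappear after summation by parts and the edge-summed estimate \eqref{eq: tension Q 0} is not pointwise; the pathwise telescoping trick above converts a single-edge estimate into the edge-averaged one that Lemma~\ref{lem: compacite} supplies.
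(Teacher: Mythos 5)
Your proof is correct, but it follows a genuinely different route from the paper. After the common starting point (the pathwise identity \eqref{eq: identite micro} plus discrete summation by parts, which both arguments use), the paper does \emph{not} invoke Lemma \ref{lem: compacite}: it writes $V_T=\frac{1}{N}\sum_x W_x Q_T^N(x)$ with $W_x$ the second-order Taylor remainders, applies the exponential Chebyshev inequality, and recognizes $\exp\big(a\sum_x W_xQ_T^N(x)+a\,o_\phi(N)\big)$ as the mean-one Radon--Nikodym martingale \eqref{radonnikodym} of a tilted dynamics whose drift $H$ has increments $W_x$; this gives $\frac{1}{N}\log\p^N_\gamma(V_T>\delta)\leq -a\delta+a\,o_\phi(1)$ and one lets $a\to\infty$. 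You instead make the whole estimate deterministic on a good event supplied by the total-variation bounds \eqref{eq: tension K 0}--\eqref{eq: tension Q 0}, which is perfectly legitimate since Lemma \ref{lem: compacite} is established beforehand and in the sharp form $\p^N_\gamma(\cdot\geq a)\leq e^{-NC_a}$. Two remarks. First, your treatment of the boundary edges is actually more careful than the paper's: the paper absorbs the terms $\phi((\pm N\mp 1)/N)\,Q_T^N(\mp N)$ into the $W_x$ and asserts all $W_x=O(N^{-2})$, which strictly speaking leaves an uncancelled first-order piece $\frac{1}{N^2}\phi'(\pm 1)Q_T^N(\mp N)$ since $\nabla\phi$ need not vanish at $\pm 1$; your telescoping of \eqref{eq: identite micro} (which is essentially the argument already used in \eqref{eq: borne intermediaire} to prove \eqref{eq: tension Q 0}, and which you could equally well have cited) disposes of it cleanly. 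Second, the trade-off: the paper's martingale argument is shorter and independent of the tightness lemma, whereas yours is more elementary (no change of measure) at the cost of importing Lemma \ref{lem: compacite}; both yield the required super-exponential decay. (The sign in front of $\phi(\frac{x}{N})K_T(x)$ in the statement is evidently a typo for a minus sign, consistent with \eqref{relation}; you read it as intended, as does the paper's own proof.)
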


\begin{proof}

For any site $x$ and times $s<t$, the following microscopic relation holds
$$
\eta^N_t(x) - \eta^N_s(x) = [Q_t^{N}(x-1) - Q_s^{N}(x-1)]
-[Q_t^{N}(x) - Q_s^N(x)] + K^N_t(x) - K^N_s(x) \, .
$$
Summing in $x$ and integrating by parts the $Q$-term gives
$$
V_T(\rho^N,Q^N,K^N,\phi) =
\dfrac{1}{N}\sum_{x=-N}^{N-1} W_x Q_T^{N}(x),
\qquad {\rm with} \quad 
W_x = \int_{x/N}^{(x+1)/N} \, du \, \big( \frac{x+1}{N} - u \big)
\phi'' (u) \, ,
$$
where we used that $\phi$ vanishes at the boundaries.
From this identity we get for any $a>0$
\begin{eqnarray}
\p^N_\gamma \left( V_T(\rho^N,Q^N,K^N,\phi) >\delta \right)
\leqslant e^{-aN\delta} \; \p^N_\gamma  
\left( \exp \left( a \sum_{x=-N}^{N-1} W_x \, Q_T^{N}(x) \right) 
\right) \, .
\label{eq: borne sup conservation}
\end{eqnarray}
We first note that $W_x$ is of the order $1/N^2$
and that uniformly in $x$,  $W_{x+1} - W_x$ is of the order $\epsilon_N/N^2$ where $\epsilon_N$ vanishes to 0 as $N$ goes to infinity.
Thus the identity (\ref{radonnikodym}) implies
$$
\dfrac{d\p^N_{\gamma,0,H}}{d\p^N_\gamma} = 
\exp\left( a \, \sum_{x=-N}^{N-1} W_x Q_T^{N}(x)
+ a \,  o_\phi\left( N\right) \right) \, ,
$$
where $H$ is such that $H((x+1)/N) - H(x/N) = W_x$. 
This function is a mean one martingale and \eqref{eq: borne sup conservation} leads to
$$
\dfrac{1}{N}\log \p^N_\gamma 
\Big( V_T(\rho^N,Q^N,K^N,\phi) >\delta \Big)
\leqslant 
-a \delta + a \, o_\phi \left( 1 \right).
$$ 
Letting $N$ and then $a$ go
to $\infty$ concludes the proof of the lemma.
\end{proof}

\medskip

Let $(\phi_k)$ be a dense sequence of functions in $C^2_0 \left([-1,1]\right)$ and define
$$\rond{A}_n^{\delta}=\left\{(\rho,Q,K)\in\rond{E} \textrm{ such that }
 \forall\  k\leqslant n,\qquad  |V_T(\rho,Q,K, \phi_k)| \leqslant \delta \right\}.
$$
The previous lemma gives that for all $n$ and $\gd>0$,
$$
\limsup_{N\to\infty}\dfrac{1}{N}
\log \p^N_\gamma \left[(\rho^N ,Q^N ,K^N )\in F\right]
\leqslant 
-\inf_{(\rho,Q,K)\in F\cap\rond{A}_n^\gd} J(\rho,Q,K).
$$ 
This is true for any $n$ and $(\phi_n)$ is dense. So letting $\delta$ go to
$0$, we can define $I_0=+\infty$ for the trajectories which do not
satisfy the relation (\ref{relation}).

\medskip

\noindent
{\it Energy condition.} 

\begin{lem}
\label{lem: energy}
For any smooth $\gp  : (0,T)\times [-1,1] \to \bbR$. There is a constant $c_0$ such that 
\begin{eqnarray}
\label{eq: borne exp H1}
\forall k>0, \qquad \limsup_{\gep \to 0} \limsup_{N \to \infty} 
\frac{1}{N} \log \p^N_\gamma \Big( \cQ_\gp(\gr^N) \geq k
\Big) \leq -k + c_0 T \, ,
\end{eqnarray}
where $\cQ_\gp$ is defined for some suitable constant $c$
\begin{eqnarray}
\label{eq: cQ}
&& \cQ_\gp(\gr) = 
\int_0^T dt \int_{-1}^1 dx \; \gr(t,x) \nabla \gp(t,x) - \int_0^T dt (\bar \gr_+ \, \gp(t,1) - \bar \gr_- \, \gp(t,-1))
\nonumber \\
&& \qquad \qquad - \frac{1}{2 c} \int_0^T dt \int_{-1}^1 dx \; \gp(t,x)^2
\, .
\end{eqnarray}
\end{lem}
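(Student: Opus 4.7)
The plan is to establish this energy estimate through the exponential Markov inequality: since
\[
\p^N_\gamma\!\bigl(\cQ_\gp(\gr^N) \geq k\bigr) \;\leq\; e^{-Nk}\, \mathbb{E}^N_\gamma\bigl[e^{N \cQ_\gp(\gr^N)}\bigr],
\]
it suffices to produce a uniform bound $\mathbb{E}^N_\gamma[e^{N\cQ_\gp(\gr^N)}] \leq e^{c_0 NT + o(N)}$. This will be obtained by exhibiting a mean-one positive exponential martingale whose logarithm bounds $N\cQ_\gp(\gr^N)$ from below up to additive errors of order $c_0 NT$.

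I would apply the Radon-Nikodym formula (\ref{radonnikodym}) with $G \equiv 0$ and with an auxiliary drift $H(s,x)$ chosen so that, up to an affine correction in $x$ enforcing $H(s,\pm 1) = 0$, $\nabla H(s,\cdot) = -2c\,\gp(s,\cdot)$; alternatively, one may work with the direct Dynkin exponential martingale associated to the density observable $F_s(\eta) = \bra \gr_s^N, H_s\ket$, whose logarithm involves only $\gr_T^N - \gr_0^N$ (bounded by $2\|H\|_\infty$) rather than the current $Q_T^N$. The Taylor expansion performed right after (\ref{densite}) combined with the super-exponential replacement Theorem \ref{superexp} (substituting $(\eta(x)-\eta(x+1))^2$ by $2\sigma(\overline{\eta}_s^{\gep N}(x))$ in the quadratic bulk term and $\eta_s(\pm N)$ by $\bar\gr_\pm$ in the boundary term arising from summation by parts) leads, modulo an event of super-exponentially small probability, to
\[
\int_0^T R_N(s)\,ds \;\simeq\; -cN \!\int_0^T\! \bra \gr_s^N,\nabla\gp_s\ket\, ds \;+\; 2c^2 N \!\int_0^T\! \bra \sigma(\gr_s^{*\gep}),\gp_s^2\ket\, ds \;+\; cN \!\int_0^T\! [\bar\gr_+\gp(s,1) - \bar\gr_-\gp(s,-1)]\, ds
\]
up to an error of order $o(N)$. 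Since $G \equiv 0$, the Glauber part of $R_N$ is identically zero. Using the universal bound $\sigma(u) \leq 1/4$ to estimate the quadratic term by $\frac{N}{2}\int \bra c^2\gp_s^2\ket$ and choosing $c$ to be the constant appearing in \eqref{eq: cQ}, the mean-one property of the resulting exponential martingale combined with Chebyshev then yields the desired bound.

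The main obstacle is the careful treatment of the boundary contributions. The reservoir generators $\frac{N^2}{2}L_{\pm,N}$ applied to $e^{NF}$ produce a contribution of order $N^2$ unless $H$ vanishes at the boundary, which forces the affine correction to $H$; this correction in turn generates, after the boundary local equilibrium of Theorem \ref{superexp}, precisely the integrated boundary term $-cN\int[\bar\gr_+\gp(s,1) - \bar\gr_-\gp(s,-1)]\,ds$ appearing in $N\cQ_\gp$. Matching the microscopic boundary compensator with the macroscopic boundary term in \eqref{eq: cQ}, and simultaneously keeping track of the lower-order $O(N)$ and $O(NT)$ corrections (coming from the cross terms in $(\nabla H)^2$ after the affine correction, from the $\gr^N_T - \gr^N_0$ term in the density martingale, and from the discretization of the discrete Laplacian) so that they can all be absorbed into $c_0 NT$, is the delicate bookkeeping step of the proof.
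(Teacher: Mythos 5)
The paper does not actually prove this lemma: it states that the proof ``follows from \cite{BLM, FLM} and therefore is omitted''. The argument in those references is an exponential Chebyshev bound followed by the Feynman--Kac formula and the variational formula for the principal eigenvalue of the perturbed generator; the $\gp$-dependent terms are then absorbed exactly by the Dirichlet form of the stirring and reservoir parts (via the standard two-site integration by parts estimate) and by the counterterm $-\frac{1}{2c}\int \gp^2$, so that the residual $c_0 T$ comes only from the Glauber generator and is \emph{uniform in} $\gp$. Your plan shares the first step (exponential Chebyshev) but replaces the eigenvalue bound by an explicit exponential martingale plus the replacement lemma, in the spirit of the paper's Lemmas \ref{lemcontinuite} and \ref{lem: compacite}. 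That route is not unreasonable, but as written it has two concrete problems.

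First, the claim ``since $G\equiv 0$, the Glauber part of $R_N$ is identically zero'' is only true for the Radon--Nikodym martingale \eqref{radonnikodym}, whose exponent then contains $\bra Q_T^N \nabla H_T\ket$ and $\int_0^T \bra Q_s^N \tfrac{d}{ds}\nabla H_s\ket ds$ --- precisely the unbounded current terms you are trying to avoid, and which do not appear in $\cQ_\gp$ and cannot be discarded with a sign. If instead you use the Dynkin martingale for $F_s=\bra \gr_s^N, H_s\ket$, the compensator $e^{-NF}L_{1,N}e^{NF}=\sum_x c(x,\eta)\big[e^{(1-2\eta(x))H(s,x/N)}-1\big]$ does \emph{not} vanish; it is of order $N$ per unit time and is in fact the principal source of the $c_0 T$ error. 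Your error accounting (cross terms, $\gr^N_T-\gr^N_0$, discretization) omits it. Second, every error term your construction produces --- $N\int_0^T\bra \gr_s^N\,\partial_s H_s\ket ds$, the Glauber compensator bounded by $NT\,\|c\|_\infty e^{\|H\|_\infty}$, the affine-correction cross terms --- depends on $\gp$ through $H$. You therefore obtain $c_0=c_0(\gp)$, whereas the lemma is subsequently applied to a dense sequence $\{\gp_k\}$ to conclude that trajectories with $\sup_\gp \cQ_\gp(\gr)=+\infty$ have infinite rate, which requires $c_0$ independent of $\gp$. This uniformity is exactly what the Feynman--Kac/eigenvalue argument delivers and what the martingale bookkeeping, as you have set it up, does not.
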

The proof of this Lemma follows from \cite{BLM, FLM} and therefore is omitted.
By considering  a dense sequence of functions $\{ \gp_k\}$, one can deduce from \eqref{eq: borne exp H1} \cite{FLM} that the large deviations are infinite for the trajectories $\gr$ such that 
$\sup_\gp \cQ_\gp(\gr) = +\infty$.
Note that if  $\gr$ is such that $\sup_\gp \cQ_\gp(\gr) < + \infty$, then $\gr$ is in $\bbL^2([0,T],\bbH(]-1,1[))$ and 
Riesz representation theorem implies that there exists $\nabla \gr$ such that for any $\gp  : (0,T)\times [-1,1] \to \bbR$.
\begin{eqnarray}
\int_0^T  dt \int_{-1}^1 dx  \rho(t,x) \nabla
\varphi(t,x)
-\{\bar \gr_+\varphi(t,1)-\bar \gr_-\varphi(t,-1)\} = \int_0^T  dt \, \int_{-1}^1dx  \nabla \rho (t,x) \varphi(t,x) \, .
\nonumber\\
\label{eq: weak bd 00}
\end{eqnarray}


\section{Large deviations lower bound}
\label{sec: Large deviations lower bound}

The derivation of the lower bound is split into two parts following the general scheme for hydrodynamic large deviations \cite{KOV, KL}. First we derive the lower bound 
for regular trajectories. Then under assumptions (${\bf L1, L2}$), we prove that general trajectories can be approximated by regular trajectories.
A key feature in this approximation procedure is that the contribution of both currents decouple
\begin{equation}
I_0(\rho,Q,K)=I_1(\rho,Q)+I_2(\rho,K)=
\sup_{H \in C^{1,2}}J_H^1(\rho,Q)+ \sup_{G \in C^{1,0}} J_G^2(\rho,K),
\label{decomposition}
\end{equation} 
where $J_H^1$ and $J_G^2$ were defined in \eqref{J1} and \eqref{J2}.
This simplifies some steps in the approximation as both functionals can be analyzed independently
contrary to the study in \cite{JLV}.
Assumption ({\bf L1}) provides some convexity properties of the large deviation functional which simplify the proof. We follow closely some arguments of \cite{KL,BDGJL2}. Thus we will sketch the main steps of the proofs
and only detail the new aspects related to the non-conservative currents.

\subsection{Lower bound for regular trajectories}
\label{rateregular} 

In this section we derive Theorem \ref{lowerreg}.
Suppose $\rho, Q,K$ are regular in time and space and that for
 any time $t$ and $x$, $\rho(t,x)$ is bounded away from $0$ and $1$. 
To the trajectory $(\rho, Q,K)$, one can associate the functions
 $G(t,x)$ and  $H(t,x)$ satisfying
\begin{equation}
\left\{
\begin{array}{lcl}
{\partial_t Q} (t,x) &=& -\dfrac{1}{2}\nabla \rho(t,x) + \gs \big( \rho(t,x) \big) \nabla H(t,x) \, ,\\
{\partial_t K} (t,x) &=& C(\rho(t,x))e^{G(t,x)}-A(\rho(t,x))e^{-G(t,x)} \, .
\end{array} 
\right.
\label{ghmax}
\end{equation}
The pointwise existence of $G$ comes from the fact that the
polynomial
 $C X^2-\dot K X -A$ has one positive root for positive $C$ and $A$.
 Furthermore $H$ is well defined as long as $\gs(\gr) = \rho(1-\rho)\neq 0$.
We choose $H(t,-1)=0$ and the value of $H$ at $x=1$ is imposed by 
equation \eqref{ghmax}, contrary to the case of density large deviations where $H$ is equal to 0 at both boundaries
(see section \ref{sec: densiteGD}).

\medskip

\begin{lem}
\label{lem: regularfunctionnal}
For regular trajectories $(\rho, Q,K)$ the functional $I_0$ (\ref{I0})
is given by 
\begin{eqnarray}
I_0 (\rho,Q,K) = J_{G,H} (\rho,Q,K) \, ,
\label{regularfunctionnal}
\end{eqnarray}
with $G$ and $H$ as in (\ref{ghmax}).
This expression coincides with the explicit form of the functional \eqref{eq: functional}.
\end{lem}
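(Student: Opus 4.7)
The plan is to exploit the clean decomposition $I_0(\rho,Q,K)=\sup_H J_H^1(\rho,Q)+\sup_G J_G^2(\rho,K)$ from \eqref{decomposition} and compute each supremum separately by pointwise variational calculus. Since $(\rho,Q,K)$ is regular and $\rho$ is bounded away from $0$ and $1$, the candidates $H,G$ defined by \eqref{ghmax} are well defined, and it will suffice to verify that each is the maximizer of its respective functional.

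For the conservative term, I would first use $Q_0=0$ to integrate the time derivative by parts and rewrite
\begin{equation*}
J_H^1(\rho,Q)=\int_0^T\!\!\bra \dQ_s\,\nabla H_s\ket\,ds-\tfrac12\!\int_0^T\!\!\bra\rho_s\Delta H_s\ket ds-\tfrac12\!\int_0^T\!\!\bra\sigma(\rho_s)(\nabla H_s)^2\ket ds+\text{(bd.\ terms)}.
\end{equation*}
An integration by parts in space of $\bra\rho_s\Delta H_s\ket$, using $\rho(s,\pm1)=\bar\rho_{\pm}$ (built into $\mathcal A$ through the energy condition and the boundary hydrodynamics), produces boundary contributions $\tfrac12(\bar\rho_+\nabla H(s,1)-\bar\rho_-\nabla H(s,-1))$ that exactly cancel those already present in $J_H^1$. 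One is then left with a functional that is strictly concave and quadratic in $\nabla H$, namely $\int[(\dQ+\tfrac12\nabla\rho)\nabla H-\tfrac12\sigma(\rho)(\nabla H)^2]$, whose pointwise Euler--Lagrange equation reproduces \eqref{ghmax} and whose supremum is
\begin{equation*}
I_1(\rho,Q)=\frac12\int_0^T\!\!dt\!\int_{-1}^1\!\!dx\;\frac{\bigl(\dQ+\tfrac12\nabla\rho\bigr)^2}{\sigma(\rho)},
\end{equation*}
i.e.\ the first term of \eqref{eq: functional}.

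For the non-conservative term, I would analogously use $K_0=0$ to get $J_G^2=\int_0^T\bra G_s\dK_s\ket ds-\int_0^T\bra C(\rho_s)(e^{G_s}-1)+A(\rho_s)(e^{-G_s}-1)\ket ds$. The integrand is strictly concave in $G$ (no spatial derivatives appear, so the maximization is pointwise) and the Euler--Lagrange equation is exactly $\dK=C(\rho)e^G-A(\rho)e^{-G}$, matching \eqref{ghmax}. Setting $u=e^G$, this is the quadratic $Cu^2-\dK\,u-A=0$, whence $u=(\dK+\sqrt{\dK^2+4AC})/(2C)$. A short algebraic manipulation, based on the identities $Cu-Au^{-1}=\dK$ and $(Cu+Au^{-1})^2-(Cu-Au^{-1})^2=4AC$ (so $Cu+Au^{-1}=\sqrt{\dK^2+4AC}$), evaluates the maximum to $G\dK-\sqrt{\dK^2+4AC}+C+A$, which is precisely $\Phi(\rho,\dK)$ from \eqref{eq: dissipative}. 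Adding $I_1$ and $I_2$ yields \eqref{eq: functional}, and the maximizing pair $(G,H)$ being the one defined by \eqref{ghmax} gives the identity $I_0=J_{G,H}$.

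The main obstacle is not the interior algebra but the degenerate cases $C(\rho)=0$ or $A(\rho)=0$ (where the quadratic loses a root) and, more delicately, the handling of $\sigma(\rho)=0$ at the boundary values $\bar\rho_{\pm}\in\{0,1\}$, since then $H$ is not determined. In the degenerate $C=0$ regime the concave functional in $G$ attains its supremum in the interior only when $\dK\le 0$, giving the explicit closed form quoted in the statement, and yields $+\infty$ when $\dK>0$; the symmetric case $A=0$ is analogous. For the spatial boundary, regularity of $\rho$ and the assumption $\gamma\in C_e([-1,1])$ with the correct boundary values make $\sigma(\rho)$ bounded away from $0$ on compact subsets of $(0,T)\times(-1,1)$, so the maximization and the cancellation of boundary terms go through without modification.
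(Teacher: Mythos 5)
Your proposal is correct and follows essentially the same route as the paper: both identify the candidate drifts $(G,H)$ via \eqref{ghmax}, verify they realize the supremum of the decomposed functional $J_H^1+J_G^2$ (your concavity/Euler--Lagrange argument is exactly the quantitative perturbation identity the paper uses, e.g.\ the inequality $e^F-F-1\geq 0$ for the $G$-perturbation and the exact quadratic completion for the $H$-perturbation), and then evaluate $J_{G,H}$ by the same algebra to obtain $\Phi$. Note that your quadratic term $\bigl(\dQ+\tfrac12\nabla\rho\bigr)^2/(2\sigma(\rho))$ agrees with the paper's own computation at the end of its proof rather than with the literal display \eqref{eq: functional}, which appears to carry a typo in the factor multiplying $\partial_x\rho$.
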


\begin{proof}
As $(\rho, Q,K)$ is regular in time,  $J_{\tilde G, \tilde H}$ can be rewritten after integration by parts as
\begin{eqnarray}
\label{eq: J IPP}
J_{\tilde G,\tilde H}(\rho,Q,K)
&=& \int_0^T\left<\left[{\partial_s Q}_s+\frac{1}{2}
\nabla\rho_s-\frac{1}{2} \gs(\rho_s) \nabla \tilde H_s\right] \ \nabla
\tilde H_s\right> ds \\
&& + \int_0^T \left<{\partial_s K}_s \, \tilde G_s - C(\rho_s) (e^{\tilde G_s}-1)-  A(\rho_s) (e^{-\tilde G_s}-1)\right> \, ds \, , \nonumber
\end{eqnarray}
for any $(\tilde G, \tilde H)$ smooth functions.

For $G$ and $H$ given by (\ref{ghmax}), we are going to check that
\begin{eqnarray}
J_{G,H}(\rho,Q,K)\geqslant
\sup_{\tilde{G}, \tilde{H}}J_{\tilde{G},\tilde{H}}(\rho,Q,K).
\label{eq: J,H}
\end{eqnarray}
Indeed, take $\tilde{H}=H+F$ and any $\tilde{G}$, then it is easy to
see that
$$
J_{\tilde{G},\tilde{H}}(\rho,Q,K)=J_{\tilde{G},H}(\rho,Q,K)-\dfrac{1}{2}
\int_0^T \, ds \, \left< \gs \big( \rho_s \big)\ |\nabla F_s|^2\right>
\leqslant
J_{\tilde{G},H}(\rho,Q,K).
$$ 
Moreover, if $\tilde{G}=G+F$ and $\tilde{H}$ any regular function, then
from  the expression of  $J_G^2$  \eqref{J2} and the identity \eqref{ghmax} we get
\begin{align*}
\begin{split}
& J_{\tilde{G},\tilde{H}}(\rho,Q,K)=J_{G,\tilde{H}}(\rho,Q,K)
+\int_0^T \, ds \, \left<C(\rho_s)e^{G_s}\left(1+F_s-e^{F_s}\right)\right>\\
& \qquad \qquad +\int_0^T \, ds \, \left<A(\rho_s)e^{-G_s}\left(1-F_s-e^{-F_s}\right)\right> 
\leqslant J_{G,\tilde{H}}(\rho,Q,K) \, ,
\end{split}
\end{align*}
where we used that $\exp(x) -x -1 \geq 0$ for any $x \in \bbR$.
This completes the proof of \eqref{eq: J,H}.

\medskip

Finally for $G$ and $H$ given by (\ref{ghmax}), one can rewrite \eqref{eq: J IPP}
\begin{eqnarray*}
J_{G,H}(\rho,Q,K) 
&=& \dfrac{1}{2}\int_0^T \left<\dfrac{1}{\gs \big( \rho_s \big)}\ \big( {\partial_s Q}_s+
\frac{1}{2}\nabla\rho_s \big)^2\right>ds 
+ 
\int_0^T\left<C(\rho_s)\ \left(1-e^{G_s}+G_se^{G_s}\right)\right>ds
\\&& +\int_0^T\left<A(\rho_s)\ \left(1-e^{-G_s}-G_se^{-G_s}\right)\right> \, ds \,  .
\end{eqnarray*} 
This leads to the explicit form of the functional \eqref{eq: functional}.
\end{proof}

\begin{rem}
 For $G$ and $H$ as in (\ref{ghmax}), then the conservation equation
  $\partial_t \rho={\partial_t K}-\nabla {\partial_t Q}$ implies that $\gr$ obeys the hydrodynamic limit 
  \eqref{eq: limit HG hydro} of the modified dynamics $\p^N_{G,H}$ \eqref{eq: modified dynamics}.
\label{hydrodrift}
\end{rem}

\medskip

We introduce now the set of regular trajectories
\begin{definition}
\label{def: cS}
Denote by $\rond{S}$ the set  of trajectories $(\rho,Q,K)$ satisfying $I(\rho,Q,K)<\infty$
and such that 
\begin{itemize}
\item $\rho$ is bounded away from 0 and 1: there is $\gep >0$ such that $\gep< \gr(t,x) < 1-\gep$ for any $(t,x)$ in $[0,T] \times [-1,1]$.
\item There exists two regular functions $G$ and $H$ such that $(\rho,Q,K)$ is a weak solution (in the sense of \eqref{limitehydro general}) of
\begin{equation}
\left\{
\begin{array}{l}
\partial_t \rho (t,x) =\dfrac{1}{2}\Delta \rho(t,x)-\nabla\left( \gs \big( \rho(t,x) \big) \nabla H(t,x)\right) + C(\rho(t,x))e^{G(t,x)} - A(\rho(t,x))e^{-G(t,x)},\\
\rho(t, \pm1)=\bar \rho_\pm, \qquad \rho(t=0,x) = \gamma(x) \, ,
\end{array}
\right.
\label{S}
\end{equation}
\item For any smooth test function $\varphi$ in $C^{1,1} ([0,T] \times [-1,1])$
\begin{eqnarray}
\label{eq: weak Q 2}
\left<{Q_T} \, \varphi_T\right>-\int_0^T \left<{Q_s} \, \frac{d}{ds} \varphi_s\right>ds
&& = - \frac{1}{2}\bar \gr_+\int_0^T \varphi(s,1)\ ds 
+ \frac{1}{2}\bar \gr_-\int_0^T \varphi (s,-1) \ ds 
 \nonumber \\
&&  + \frac{1}{2}\int_0^T \left<\rho_s \, \nabla \varphi_s \right> \, ds +
\int_0^T\left<\sigma(\rho_s)\nabla H_s \,  \varphi_s \right>ds, 
\end{eqnarray}
\begin{eqnarray}
\label{eq: weak K 2}
\left<K_T \, \varphi_T \right> - \int_0^T \left<K_s\frac{d}{ds} \varphi_s\right> \, ds
 = \int_0^T  \, ds \, \left< \big[ C(\rho_s)e^{G_s} -  A(\rho_s)e^{-G_s} \big] \; \varphi_s \right>.  
\end{eqnarray}
\end{itemize}
\end{definition}


The same argument as in Lemma \ref{lem: regularfunctionnal} implies that for any trajectory $(\gr,Q,K)$ in $\cS$
\begin{align} 
\begin{split}
I_0(\rho,Q,K)&= J_{G,H} (\rho,Q,K)\\
&= \dfrac{1}{2}\int_0^T \left<\gs \big( \rho_s \big) \,  |\nabla H_s|^2\right>ds + \int_0^T\left<C(\rho_s)
\ \left(1-e^{G_s}+G_se^{G_s}\right)\right>ds
\\ &\quad +\int_0^T\left<A(\rho_s)\ \left(1-e^{-G_s}-G_se^{-G_s}\right)\right>ds \, .
\end{split}
\label{rem: regularsolution}
\end{align}


We prove now the lower bound for trajectories in  $\rond{S}$.
Theorem \ref{lowerreg} is a direct consequence of relation \eqref{rem: regularsolution}
and of the following Proposition.
\begin{prop} 
For all open set $\rond{O}\in\rond{E}$,
$$
\liminf_{N\to\infty} \; \dfrac{1}{N} \log \p^N_\gamma\left[ (\rho^N ,Q^N ,K^N) \in \rond{O}\right]
 \geqslant -\inf_{(\rho,Q,K)\in \rond{O}\bigcap\rond{S} } I(\rho,Q,K).
$$
\label{prop: S}
\end{prop}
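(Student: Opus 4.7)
The plan is to prove the lower bound by a standard change-of-measure argument. Fix $(\rho,Q,K)\in \mathcal{O}\cap\mathcal{S}$ realising (or nearly realising) the infimum, and let $(G,H)$ be the associated smooth drifts from Definition \ref{def: cS}. Let $\omega = \rho_0 \in C_e([-1,1])$ be the initial profile, and consider the modified process $\mathbb{P}^N_{\omega,G,H}$ with generator \eqref{eq: modified dynamics} started from the product Bernoulli measure $\nu^N_\omega$. The Radon--Nikodym derivative with respect to $\mathbb{P}^N_\gamma$ factorises as the ratio of initial distributions (giving the contribution $h_{\gamma,\omega}(\rho^N_0)$) times the process part \eqref{radonnikodym}--\eqref{densite}. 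Taking logarithms:
\begin{equation*}
\frac{1}{N}\log \frac{d\mathbb{P}^N_{\omega,G,H}}{d\mathbb{P}^N_\gamma}
= \mathcal{J}(\rho^N,Q^N,K^N) + h_{\gamma,\omega}(\rho^N_0) + O_{G,H}\!\left(\tfrac{1}{N}\right),
\end{equation*}
where $\mathcal{J}$ is the functional \eqref{Jepsilon} (with the local averages $\rho_s^N * i_\epsilon$).

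Next, I would establish a law of large numbers under $\mathbb{P}^N_{\omega,G,H}$ showing that $(\rho^N,Q^N,K^N) \to (\rho,Q,K)$ in probability. Since $(\rho,Q,K)$ is, by Definition \ref{def: cS}, the unique weak solution of the driven hydrodynamic system \eqref{S}--\eqref{eq: weak K 2}, and since this system is precisely the hydrodynamic limit of $\mathbb{P}^N_{\omega,G,H}$ (Remark \ref{hydrodrift} combined with the analogues of Theorem \ref{limitehydroK} for the currents, derived by the same martingale method used in Lemma \ref{lemcontinuite}), one gets $\mathbb{P}^N_{\omega,G,H}(\mathcal{O}) \to 1$. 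Uniqueness of weak solutions here uses assumption \textbf{(L2)} via the Appendix. Writing
\begin{equation*}
\mathbb{P}^N_\gamma(\mathcal{O})
= \mathbb{E}^N_{\omega,G,H}\!\left[\frac{d\mathbb{P}^N_\gamma}{d\mathbb{P}^N_{\omega,G,H}}\mathbf{1}_\mathcal{O}\right],
\end{equation*}
and applying Jensen's inequality to the probability measure $d\mathbb{P}^N_{\omega,G,H}(\cdot\,|\,\mathcal{O})$, one obtains
\begin{equation*}
\frac{1}{N}\log\mathbb{P}^N_\gamma(\mathcal{O}) \;\geq\; \frac{1}{N}\log\mathbb{P}^N_{\omega,G,H}(\mathcal{O}) \;-\; \frac{1}{N\,\mathbb{P}^N_{\omega,G,H}(\mathcal{O})}\,\mathbb{E}^N_{\omega,G,H}\!\left[\log\frac{d\mathbb{P}^N_{\omega,G,H}}{d\mathbb{P}^N_\gamma}\mathbf{1}_\mathcal{O}\right].
\end{equation*}

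The final step is to pass to the limit. The super-exponential replacement lemma (Theorem \ref{superexp}) applied under $\mathbb{P}^N_{\omega,G,H}$, together with the hydrodynamic convergence and the exponential tightness bounds of Lemma \ref{lem: compacite} (to control integrability of the Radon--Nikodym), let me replace $\rho_s^N*i_\epsilon$ by $\rho_s$ in $\mathcal{J}$ and identify
\begin{equation*}
\lim_{\epsilon\to 0}\lim_{N\to\infty}\frac{1}{N}\,\mathbb{E}^N_{\omega,G,H}\!\left[\log\frac{d\mathbb{P}^N_{\omega,G,H}}{d\mathbb{P}^N_\gamma}\mathbf{1}_\mathcal{O}\right] = J_{G,H}(\rho,Q,K) + h_\gamma(\rho_0),
\end{equation*}
since $h_{\gamma,\omega}(\omega)=h_\gamma(\omega)$. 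By the characterisation \eqref{rem: regularsolution}, the right-hand side equals $I_0(\rho,Q,K)+I_\gamma(\rho_0)=I(\rho,Q,K)$. Combining everything yields $\liminf_{N}\frac{1}{N}\log\mathbb{P}^N_\gamma(\mathcal{O})\geq -I(\rho,Q,K)$, and taking the infimum over $(\rho,Q,K)\in\mathcal{O}\cap\mathcal{S}$ concludes. The main obstacle will be the replacement step: one must justify that the microscopic integrands in \eqref{densite} (notably the square term $(\nabla H)^2$ weighted by local pair occupations, and the Glauber rates weighted by $\exp[(1-2\eta(x))G]$) can be replaced by their local averages uniformly on the event $\mathcal{O}$, and that the boundary contributions controlled by $B'_{\delta,\pm}$ in \eqref{eq: B delata gep} yield the correct reservoir terms in $J^1_H$. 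Both follow from the modified-dynamics version of Theorem \ref{superexp}, but care is needed to verify that the integrability afforded by the martingales in Lemma \ref{lemcontinuite} is enough to turn the almost sure convergence into $L^1$ convergence when $\mathcal{O}$ is open rather than compact.
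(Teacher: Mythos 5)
Your proposal is correct and rests on the same core ingredients as the paper: tilting by the modified dynamics $\p^N_{\rho_0,G,H}$ associated with the drifts $(G,H)$ of Definition \ref{def: cS}, the law of large numbers for the perturbed process (with uniqueness of weak solutions from the Appendix), and the super-exponential replacement Theorem \ref{superexp}. Where you genuinely diverge is in how the lower bound is extracted from the change of measure. You condition on $\rond{O}$ and apply Jensen's inequality, which forces you to prove an $\bbL^1$ (relative entropy) convergence of $\frac{1}{N}\log\frac{d\p^N_{\rho_0,G,H}}{d\p^N_\gamma}$; since the currents $Q^N_T,K^N_T$ appearing in \eqref{radonnikodym} are unbounded, convergence in probability alone does not suffice and you must supply uniform integrability via tail bounds of the type of Lemma \ref{lem: compacite} (restated under the perturbed measure, which the paper only proves under $\p^N_\gamma$). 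You correctly flag this as the delicate point. The paper instead restricts to $\rond{V}\cap B_{\delta,\gep}$, where $\rond{V}$ is a small open neighborhood of $(\rho,Q,K)$ on which the continuous functional $\J$ and $h_{\gamma,\rho_0}$ vary by at most $\delta$; on that event the Radon--Nikodym derivative is controlled \emph{deterministically} by $\exp\{-N(\J(\rho,Q,K)+h_\gamma(\rho_0)+O(\delta))\}$, so only $\p^N_{\rho_0,G,H}(\rond{V}\cap B_{\delta,\gep})\to 1$ is needed and the integrability issue never arises. Both routes are standard and both work here; the paper's neighborhood trick is lighter precisely because it sidesteps the uniform-integrability verification that your Jensen step still owes, while your entropy formulation has the mild advantage of not requiring the continuity of $\J$ on $\rond{E}$ as an explicit input.
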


\begin{proof}
Let  $(\rho,Q,K)$ be in $\rond{O} \bigcap \rond{S}$ and
 satisfying $I(\rho,Q,K)<\infty$. There is regular $(G,H)$ for which $\rho$ is a weak solution of (\ref{S}).
  Thanks to Lemma \ref{superexp}, there is $\epsilon >0$ such that 
  the trajectories concentrate in the set $B_{\delta,\gep}$ \eqref{eq: B delata gep}
\begin{equation}
\limsup_{N\to\infty} \dfrac{1}{N} \log
\p^N_{\gamma,G,H} (B_{\delta,\gep}^c) \leqslant -1.
\label{bonepsilon}
\end{equation}

The function $\J$ defined in (\ref{Jepsilon}) is continuous on
$\rond{E}$. Moreover, since $\rho_0$ is bounded away from 0 and 1, the function
$ \gp \to h_{\gamma,\rho_0}( \gp (0,\cdot))$ is continuous on $D([0,T], \cM_0)$. Let $\rond{V}\subset\rond{O}$
 be an open neighborhood of $(\rho,Q,K)$ such that
$$\forall (\gp,U,L)\in \rond{V}, 
\qquad \qquad \ |\J(\gp,U,L)-\J(\rho,Q,K)|<\delta,$$
$$
|h_{\gamma,\rho_0}(\gp_0)-h_{\gamma,\rho_0}(\rho_0)|=
|h_{\gamma,\rho_0}(\gp_0)-h_{\gamma}(\rho_0)|<\delta.
$$
Using the change of measure \eqref{radonnikodym}
\begin{eqnarray*}
 \p^N_\gamma (\rond{O}) 
& \geqslant & \p^N_\gamma(\rond{V})
\geqslant \p^N_\gamma (\rond{V}\cap {B_{\delta,\epsilon} }) 
\geqslant
\p^N_{\rho_0,G,H} \left(\dfrac{d \p^N_\gamma}{d \p^N_{\rho_0,G,H}}
\mathbf{1}_{\rond{V}\cap {B_{\delta,\epsilon} }}\right)\\
&\geqslant& \exp\left\{-N(\J(\rho,Q,K)+
h_{\gamma}(\rho_0)+O_{G,H}(N^{-1},\epsilon,\delta))\right\}
\p^N_{\rho_0,G,H} \left(\rond{V}\cap {B_{\delta,\epsilon} }\right) \, .
\end{eqnarray*}
Thanks to (\ref{bonepsilon}) and the hydrodynamical limit
for the perturbed process  \eqref{eq: limit HG hydro}
$$
\lim_{N\to\infty} \p^N_{\rho_0,G,H} \left(\rond{V}\cap {B_{\delta,\epsilon}    }\right)=1 \, ,
$$
where we used the uniqueness of the weak solution of (\ref{S}) (see the Appendix) to conclude that the
probability of $\rond{V}$ converges to $1$.
This leads to
$$
\liminf_{N \to \infty} \dfrac{1}{N}\log \p^N_\gamma (\rond{O})\geqslant -\J(\rho,Q,K)
-h_{\gamma,\rho_0}(\rho_0)+O_{G,H}(\delta,\epsilon).
$$ 
Letting $\epsilon\downarrow 0$ and $\delta\downarrow 0$, and since $(G,H)$
satisfy (\ref{S}), the Proposition is completed. 
\end{proof}

\subsection{Approximation for general trajectories}
\label{sec: Approximation for general trajectories}

To complete Theorem \ref{thm : lower general} for general open sets, it remains to prove that
\begin{lem}
\label{lem: I-regularite}
We assume ({\bf L1}, {\bf L2}).
For any $(\rho,Q,K)$ such that $I(\rho,Q,K) < \infty$ there is a sequence $(\rho^{(n)},Q^{(n)},K^{(n)})$ in $\cS$ converging 
weakly to $(\rho,Q,K)$ such that
\begin{eqnarray}
I (\rho,Q,K) = \lim_{n \to \infty} I (\rho^{(n)},Q^{(n)},K^{(n)})  \, .
\label{eq: lower bound regul}
\end{eqnarray}
\end{lem}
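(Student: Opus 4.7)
My plan is to build the approximating sequence in two successive stages, mimicking the strategy developed for purely conservative dynamics in \cite{BDGJL2}: first a convex combination with the hydrodynamic trajectory that pushes $\rho$ uniformly away from $0$ and $1$, then a space-time mollification producing smooth profiles that solve a system of the form \eqref{S}. The decomposition $I_0 = I_1(\rho,Q) + I_2(\rho,K)$ in \eqref{decomposition} lets the diffusive and reactive parts be approximated independently, and lower semicontinuity of $I$ (inherited from Theorem \ref{upper}) gives the $\liminf$ direction automatically, so only a matching $\limsup$ needs to be established.

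\textbf{First stage.} Let $(\bar\rho,\bar Q,\bar K)$ be the hydrodynamic trajectory, i.e.\ the unique solution of \eqref{limitehydro} together with $\partial_t \bar Q = -\tfrac{1}{2}\nabla\bar\rho$ and $\partial_t \bar K = C(\bar\rho) - A(\bar\rho)$; a maximum principle keeps $\bar\rho$ strictly inside $(0,1)$ and one has $I_0(\bar\rho,\bar Q,\bar K) = 0$. Set
\[
(\rho^\epsilon, Q^\epsilon, K^\epsilon) = (1-\epsilon)(\rho,Q,K) + \epsilon(\bar\rho,\bar Q,\bar K).
\]
Linearity preserves the conservation law \eqref{relation}, and $\rho^\epsilon$ is now uniformly bounded away from $0$ and $1$. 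For the $I_1$ piece, concavity of $\sigma$ yields the classical SSEP estimate
\[
\frac{(\partial_t Q^\epsilon + \tfrac12\nabla\rho^\epsilon)^2}{2\sigma(\rho^\epsilon)} \leq (1-\epsilon)\,\frac{(\partial_t Q + \tfrac12\nabla\rho)^2}{2\sigma(\rho)},
\]
so that $I_1(\rho^\epsilon,Q^\epsilon) \leq (1-\epsilon)\,I_1(\rho,Q)$. For the $I_2$ piece I invoke assumption \textbf{(L1)}: the concavity of $A$ and $C$, combined with the explicit expression \eqref{eq: dissipative} of $\Phi$, yields the analogous convex combination inequality $I_2(\rho^\epsilon,K^\epsilon) \leq (1-\epsilon)\,I_2(\rho,K)$, as in the corresponding step of \cite{JLV}. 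Continuity of the initial entropy $I_\gamma$ on densities bounded away from $\{0,1\}$ then gives $\limsup_{\epsilon\to 0} I(\rho^\epsilon,Q^\epsilon,K^\epsilon) \leq I(\rho,Q,K)$.

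\textbf{Second stage.} Convolve $(\rho^\epsilon,Q^\epsilon,K^\epsilon)$ with a smooth, compactly supported kernel $i_\delta$ in space and time, using the boundary-adapted mollifier of \cite{BDGJL1,FLM} so that the values $\bar\rho_\pm$ are preserved at $x=\pm 1$. For $\delta$ small the mollified density $\rho^{\epsilon,\delta}$ remains uniformly inside $(0,1)$, and convolution preserves the conservation law as well as the energy condition \eqref{eq: energy}. Solving \eqref{ghmax} pointwise then defines smooth fields $(H^{\epsilon,\delta}, G^{\epsilon,\delta})$, since $\sigma(\rho^{\epsilon,\delta})$, $A(\rho^{\epsilon,\delta})$ and $C(\rho^{\epsilon,\delta})$ are all uniformly positive; by construction $(\rho^{\epsilon,\delta},Q^{\epsilon,\delta},K^{\epsilon,\delta}) \in \cS$ with $I_0$ given by \eqref{rem: regularsolution}. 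Jensen's inequality in the convolution variable, together with uniform continuity of the integrand in \eqref{eq: functional} on the range of $\rho^{\epsilon,\delta}$, gives $\limsup_{\delta\to 0} I(\rho^{\epsilon,\delta},Q^{\epsilon,\delta},K^{\epsilon,\delta}) \leq I(\rho^\epsilon,Q^\epsilon,K^\epsilon)$. A diagonal extraction $\delta_n,\epsilon_n \to 0$ produces the announced sequence.

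The delicate point is the convex-combination bound for $I_2$: the diffusive piece is handled by the elementary computation above exactly as in \cite{BDGJL2}, but the Poisson-type functional $\Phi$ only behaves well under $\rho \mapsto (1-\epsilon)\rho + \epsilon\bar\rho$ once assumption \textbf{(L1)} is available to provide the right concavity structure on $A$ and $C$. Assumption \textbf{(L2)} intervenes only at the very end, where it is used through the Appendix to guarantee uniqueness of weak solutions of \eqref{S} and hence to identify $\rho^{\epsilon,\delta}$ with the hydrodynamic profile of the modified dynamics driven by $(G^{\epsilon,\delta}, H^{\epsilon,\delta})$, so that Proposition \ref{prop: S} can be applied to the regularised trajectory to complete \eqref{eq: lower bound regul}.
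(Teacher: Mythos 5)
Your two-stage architecture is reasonable and Stage 1 coincides with the paper's Lemma \ref{lem: away}; Stage 2, however, takes a genuinely different route (mollify the trajectory and read off $G,H$ from \eqref{ghmax}, rather than the paper's strategy of regularising the drifts $G,H$ and re-solving the PDE \eqref{edp}), which, if completed, would bypass the paper's hardest step, namely the stability analysis of \eqref{edp} under weak convergence of unbounded drifts. But two of your justifications are wrong as stated, and both failures have the same source: you never introduce the decomposition $I_0=\tilde I_0+\int_0^T\bra C(\rho_t)+A(\rho_t)\ket\,dt$ of \eqref{tildei}. First, the inequality $I_2(\rho^\epsilon,K^\epsilon)\leq(1-\epsilon)I_2(\rho,K)$ does not follow from concavity of $A$ and $C$: the paper states explicitly that $I_2$ is \emph{not} convex. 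Concavity of $A,C$ makes $\tilde J^2_G=J^2_G-\int\bra C(\rho)+A(\rho)\ket$ convex in $(\rho,K)$ (the coefficients $e^{\pm G}$ are positive, whereas $e^{G}-1$ changes sign), so convexity holds only for $\tilde I_2=\sup_G\tilde J^2_G$; the leftover term $\int\bra C(\rho)+A(\rho)\ket$ is \emph{concave} and satisfies the reverse inequality under convex combination. One only recovers $\limsup_\epsilon I_2(P_\epsilon)\leq I_2(P)$ by combining convexity of $\tilde I_2$ with $\|\cdot\|_\infty$-continuity of the remainder, which is what the paper (following \cite{JLV}) actually does. Second, the $\liminf$ direction is not ``automatic'': $I$ is not known to be lower semicontinuous for the weak topology (Theorem \ref{upper} does not assert this), again because of the concave remainder, which is only weakly \emph{upper} semicontinuous. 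The paper gets the converse inequality from the l.s.c.\ of $\tilde I_0$ alone plus a separate argument (as in \eqref{eq: decomposition 0}, using the energy bound \eqref{bornegradrhon}) showing that $\int\bra C(\rho^{(n)})+A(\rho^{(n)})\ket$ actually converges. In your construction the approximants converge strongly, so this is easy, but it must be said.

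The same omission infects your Jensen step: the integrand $\Phi$ in \eqref{eq: functional} is not jointly convex in $(\rho,\partial_tK)$ (only $\tilde\Phi=\Phi-C-A$ is), so Jensen must be applied after the same subtraction. Moreover, for the non-smooth path $P^\epsilon$ the pointwise integrand form \eqref{eq: functional} is not available; you must either run Jensen on the variational formulas $J^1_H$, $\tilde J^2_G$ (convex in $(\rho,Q,K)$ for fixed $H,G$, with translation errors at the boundary to control) or first invoke the Riesz representation \eqref{bonIun} and the definition \eqref{eq: drift G} of $G$, as the paper does in Section \ref{sec: drift non conserved}. Finally, membership in $\cS$ requires the exact boundary data $\rho(t,\pm1)=\bar\rho_\pm$, $\rho(0,\cdot)=\gamma$ and $Q_0=K_0=0$; your appeal to a boundary-adapted mollifier addresses the spatial boundary but not the initial time, for which one needs a one-sided kernel and an extension of the path beyond $T$ by the hydrodynamic flow, as in Lemma \ref{timereg}. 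None of these gaps is fatal, but each requires the $\tilde I_0$ decomposition that your write-up is missing.
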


Lemma \ref{lem: I-regularite} implies  that
\begin{eqnarray}
\label{eq: approx inf S 0}
\inf_{(\rho,Q,K)\in \rond{O}\bigcap \cS }
I(\rho,Q,K)= \inf_{(\rho,Q,K)\in \rond{O}} I(\rho,Q,K).
\end{eqnarray}
Combining this identity and Proposition \ref{prop: S} proves Theorem \ref{thm : lower general}.

\subsubsection{Bounding the density away from 0 and 1}
\label{away}

We first approximate the density by trajectories bounded away from 0 and 1.
\begin{lem}
\label{lem: away}
Let $P=(\rho,Q,K)$ be a path such that $I(P)<+\infty$. There is  
$P_\gd =(\rho_\gd,Q_\gd,K_\gd)$ with density $\gr_\gd$ uniformly bounded away from 0 and 1 
which converges to $P$ and such that 
$$
I(P)=\lim_{\delta\to 0} I(P_\delta) \, .
$$
\end{lem}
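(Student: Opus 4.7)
The strategy is to form a convex combination of $P=(\rho,Q,K)$ with a reference trajectory $\bar P=(\bar\rho,\bar Q,\bar K)$ whose density is uniformly bounded away from $0$ and $1$ and whose rate function is finite. A natural choice is to take $\bar\rho$ to be the weak solution of the hydrodynamic equation \eqref{limitehydro} with initial datum $\gamma$ and boundary data $\bar\rho_\pm$, so that $\bar\rho$ is smooth and bounded away from $0$ and $1$ (by the maximum principle, since $\gamma \in C_e([-1,1])$, $\bar\rho_\pm \in (0,1)$, and $A(0)=C(1)=0$), with corresponding deterministic currents $\partial_t \bar Q = -\tfrac{1}{2}\nabla \bar\rho$ and $\partial_t \bar K = C(\bar\rho)-A(\bar\rho)$ satisfying conservation and giving $I(\bar P)=0$.

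Set $P_\delta := (1-\delta) P + \delta \bar P$. Linearity of \eqref{relation} yields $P_\delta \in \cA$; the convex combination gives $\delta\epsilon_0 \leq \rho_\delta \leq 1-\delta\epsilon_0$, where $\epsilon_0>0$ is the uniform lower bound of $\min(\bar\rho,1-\bar\rho)$; and $P_\delta \to P$ in $\cE$ as $\delta \to 0$. Lower semicontinuity of $I$ (immediate from the variational representations since each $J_{G,H}$ is continuous on $\cE$ and $I_\gamma$ is lower semicontinuous as a relative entropy) furnishes the easy direction $I(P) \leq \liminf_{\delta\to 0} I(P_\delta)$.

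For the matching upper bound we rely on convexity of the rate function. Writing $I=I_\gamma+I_1+I_2$: $I_\gamma$ is convex in $\rho(0,\cdot)$ directly from the relative entropy structure; $I_1$ is jointly convex in $(\rho,Q)$ since $(u,v)\mapsto u^2/v$ is jointly convex and non-increasing in $v$ on $\bbR\times(0,\infty)$, combined with the concavity of $\sigma(u)=u(1-u)$; and, under assumption $(\textbf{L1})$, the concavity of $A$ and $C$ yields the corresponding joint convex-combination inequality for $(\rho,\kappa)\mapsto \Phi(\rho,\kappa)$ in \eqref{eq: dissipative}, hence for $I_2$. Together with $I(\bar P)=0$ these bounds give $I(P_\delta)\leq (1-\delta)\, I(P)$, so $\limsup_{\delta\to 0} I(P_\delta)\leq I(P)$, completing the proof. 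The main technical obstacle is the convex-combination bound for $I_2$: $J_G^2(\rho,K)$ fails to be concave in $\rho$ for generic $G$, so the inequality cannot be read off the variational formula \eqref{J2} and one must exploit the explicit expression \eqref{eq: dissipative}, which is precisely the role of assumption $(\textbf{L1})$.
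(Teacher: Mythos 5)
Your overall strategy is exactly the paper's: interpolate with the hydrodynamic trajectory $\bar P$ (which has zero cost and density bounded away from $0$ and $1$), get one inequality from lower semicontinuity and the other from a convex-combination bound. The treatment of $I_\gamma$ and $I_1$ is fine (the paper argues convexity of $I_1$ directly from the variational formula \eqref{J1}, as a supremum of functionals that are linear in $Q$ and convex in $\rho$ because $\sigma$ is concave and $(\nabla H)^2\geq 0$ --- this avoids invoking the explicit form \eqref{eq: functional}, which is only established for regular trajectories).

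However, there is a genuine gap in your treatment of $I_2$: the claimed joint convex-combination inequality for $(\rho,\kappa)\mapsto\Phi(\rho,\kappa)$ under \textbf{(L1)} is false, and the paper explicitly states that $I_2$ is \emph{not} convex. Concretely, take $A\equiv 0$ and $C(\rho)=1-\rho^2$ (admissible under \textbf{(L1)} and \textbf{(L2)}); then $\Phi(\rho,0)=C(\rho)$, which is strictly concave in $\rho$, so the midpoint inequality fails (e.g.\ $\Phi(0.4,0)=0.84 > \tfrac12(\Phi(0,0)+\Phi(0.8,0))=0.68$). The same failure persists with both rates positive, e.g.\ $A\equiv\epsilon$ small: $\partial_\rho^2\Phi(\rho,0)=-2+2\sqrt{\epsilon}\,(1-\rho^2)^{-3/2}<0$ for $\rho$ away from $1$. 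The source of the problem is the term $C(\rho)+A(\rho)$ in \eqref{eq: dissipative}, which is concave. The correct repair (the trick of \cite{JLV} used in the paper) is to write $I_2(\rho,K)=\sup_G \tilde J^2_G(\rho,K)+\int_0^T\langle C(\rho_t)+A(\rho_t)\rangle\,dt$ with $\tilde J^2_G=J^2_G-\int_0^T\langle C(\rho_t)+A(\rho_t)\rangle\,dt$; for each fixed $G$ the $\rho$-dependence of $\tilde J^2_G$ is $-\langle C(\rho)e^{G}+A(\rho)e^{-G}\rangle$, a nonnegative combination of convex functions, so $\sup_G\tilde J^2_G$ is convex (and lower semicontinuous), while the subtracted term $\int\langle C+A\rangle$ is handled separately by continuity along $\rho_\delta\to\rho$. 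With this decomposition one obtains $\tilde I_0(P_\delta)\leq(1-\delta)\tilde I_0(P)+\delta\tilde I_0(\bar P)$ and concludes; your inequality $I(P_\delta)\leq(1-\delta)I(P)$ for the full functional does not hold as stated.
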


\begin{proof}
Using Assumption (${\bf L1}$), we first establish a property of the functional $I_0$.
We use the decomposition \eqref{decomposition} of $I_0$.
$I_1$ is a convex functional of $(\gr,Q)$ as it is  the supremum of $J_H^1$ which are convex functionals of 
$(\gr,Q)$ (we used that $\sigma(\rho)=\rho(1-\rho)$ is concave).
$I_2$ is not convex, but we use a trick introduced in \cite{JLV} and decompose $I_2$ as
\begin{equation*}
I_2 (\rho,K) = \sup_{G \in C^{1,0}} \tilde{J}_G^2(\rho,K) + \int_0^T \, dt \, \bra C(\rho_t ) + A(\rho_t) \ket \, .
\end{equation*}
Since $A$ and $C$  are concave,  $\sup_{G} \tilde{J}_G^2(\rho,K)$ is a convex functional of $(\gr,K)$.
Thus  the large deviation functional can be decomposed into two terms
\begin{equation}
I_0 (\rho,Q,K) =
\tilde{I}_0 (\rho,Q,K) + \int_0^T \, dt \, \bra C(\rho_t ) + A(\rho_t) \ket \, ,
\label{tildei}
\end{equation}
and  $\tilde{I}_0$ is a convex functional of $(\rho,Q,K)$.
We deduce that  $\tilde{I}_0$  is lower semi-continuous for the weak topology.

\medskip

We turn now to the approximation procedure.
Let $\bar{P}=(\varphi,U,M)$ be the solution of the hydrodynamic equation \eqref{limitehydro}
$$
\left\{\begin{array}{lcl}
\partial_t \varphi (t,x)
&=& \frac{1}{2}\Delta\varphi (t,x)+ C(\varphi(t,x)) - A(\varphi(t,x))\\
\partial_t U (t,x) &=&-\frac{1}{2} \nabla \varphi (t,x)\\
\partial_t M (t,x) &=& C(\varphi (t,x))-A(\varphi (t,x))
\end{array}
\right.$$
with boundary conditions 
$$
\varphi_0 (0,x) =\gamma (x) \ , \ U(0,x) =0\ ,\ M(0,x) = 0,\ 
\varphi(t,-1)=\bar \gr_-,\ \varphi(t,1)=\bar \gr_+ \, .
$$
By construction $I (\bar{P})=0$.
We set $P_\delta = (1-\delta)P+\delta \bar{P}$ which has a density bounded away from $0$ and $1$. 
As $\tilde{I}_0 (\bar{P}) < \infty$, the convexity implies that $\tilde{I}_0 (P_\delta)\leqslant
(1-\delta) \tilde{I}_0 (P)+ \gd \tilde{I}_0 (\bar{P})$ so that
\begin{equation*}
\limsup_{\delta\to 0}
\tilde{I}_0(P_\delta)\leqslant \tilde{I}_0(P) \, .
\end{equation*} 
 As $P_\delta$ weakly converges to $P$, the lower semi-continuity of $\tilde I_0$ implies
$$
\tilde{I}_0(P)\leqslant \liminf_{\delta\to 0} \tilde{I}_0(P_\delta) \, .
$$
Thus  $\tilde{I}_0(P_\delta)$ converges to $\tilde{I}_0(P)$. 
Finally $h_\gamma$ and
$\rho \to \int_0^T \, dt \, \bra C(\rho_t)+A(\rho_t) \ket$ are continuous for $\| \cdot \|_\infty$. This completes the Lemma.
\end{proof}

\subsubsection{Time regularisation}
\label{sec: Time regularisation}

We will prove
\begin{lem}
For any path $P=(\rho,Q,K)$ such that $I(P)<\infty$,
there is a sequence regular in time $P_\gep=(\rho_\epsilon, Q_\epsilon,K_\epsilon)$ converging weakly to $(\rho,Q,K)$ 
such that  $I(P_\epsilon)$ converges to $I(P)$.
\label{timereg}
\end{lem}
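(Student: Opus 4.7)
The plan is to define $P_\epsilon = (\rho_\epsilon, Q_\epsilon, K_\epsilon)$ by convolving $P$ in time with a mollifier, and to exploit the decomposition \eqref{tildei} together with convexity and lower semi-continuity of $\tilde I_0$ to obtain the matching upper and lower bounds on $I(P_\epsilon)$.

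More precisely, fix a smooth nonnegative function $\iota_\epsilon$ supported in $[0, \epsilon]$ with $\int \iota_\epsilon = 1$. In order to convolve up to time $T$, first extend $(\rho, Q, K)$ to $[0, T + \epsilon]$: on $[T, T + \epsilon]$ let $\rho$ solve the hydrodynamic equation \eqref{limitehydro} with initial datum $\rho_T$, and define $Q, K$ by their instantaneous currents $\partial_t Q = -\frac{1}{2}\nabla \rho$ and $\partial_t K = C(\rho) - A(\rho)$. The extended trajectory satisfies \eqref{relation}, and on $[T, T + \epsilon]$ the cost $I_0$ vanishes. Then set
\[
(\rho_\epsilon, Q_\epsilon, K_\epsilon)(t) = \int_0^\epsilon \iota_\epsilon(s)\, (\rho, Q, K)(t+s)\, ds, \qquad t \in [0, T] \, .
\]
By linearity, \eqref{relation} and the energy condition \eqref{eq: energy} pass to $P_\epsilon$, the path $P_\epsilon$ is smooth in time, and $P_\epsilon \to P$ in $\mathcal E$.

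For the convergence of the rate function I use the decomposition \eqref{tildei}. The term $\int_0^T \bra C(\rho_s) + A(\rho_s) \ket\, ds$ is continuous along the mollification since $C, A$ are bounded continuous and $\rho_\epsilon \to \rho$ almost everywhere. For $\tilde I_0$, the convexity established in Lemma \ref{lem: away} and Jensen's inequality yield
\[
\tilde I_0(P_\epsilon) \leqslant \int_0^\epsilon \iota_\epsilon(s)\, \tilde I_0(P^{(s)}) \, ds,
\]
where $P^{(s)}(t) = P(t+s)$ is the shifted extended trajectory; because the hydrodynamic extension has zero rate on $[T, T+\epsilon]$, the right-hand side differs from $\tilde I_0(P)$ only by a boundary term that vanishes as $\epsilon \to 0$. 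The matching lower bound $\liminf_\epsilon \tilde I_0(P_\epsilon) \geqslant \tilde I_0(P)$ follows from the lower semi-continuity of $\tilde I_0$ (as a supremum of continuous convex functionals on $\mathcal E$). Combining these gives $I_0(P_\epsilon) \to I_0(P)$.

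It remains to control the initial cost. Invoking Lemma \ref{lem: away} first, I may assume $\rho$ is uniformly bounded away from $0$ and $1$, so that $h_\gamma$ is continuous in the $L^\infty$ topology on the class of densities considered. The conservation law \eqref{relation} together with the estimates of Lemma \ref{lem: compacite} ensure that $\rho$ is weakly continuous in time with $\rho_0 = \gamma$, hence $\rho_\epsilon(0) = \int_0^\epsilon \iota_\epsilon(s)\, \rho(s)\, ds \to \gamma$ and $I_\gamma(\rho_\epsilon(0)) \to 0$ by dominated convergence. The main technical obstacle is precisely this handling of the time boundary: the choice of a forward-looking mollifier supported on $[0, \epsilon]$ removes the need to extend before $t = 0$ (which would risk spoiling the initial cost), while the zero-cost hydrodynamic extension beyond $T$ is what makes the Jensen bound usable.
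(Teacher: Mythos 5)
Your construction coincides with the paper's: a forward mollifier supported in $[0,\epsilon]$, a zero-cost hydrodynamic extension of the path beyond time $T$, and the convexity/lower semi-continuity machinery of Lemma \ref{lem: away} to pass the rate function to the limit (you work with the decomposition \eqref{tildei}, the paper invokes \eqref{decomposition} and treats $I_1$ and $I_2$ separately, but this is the same circle of ideas). Two points in your write-up need repair, however. First, as written your currents do not vanish at time $0$: $Q_\epsilon(0,\cdot)=\int_0^\epsilon \iota_\epsilon(s)\,Q(s,\cdot)\,ds \neq 0$ in general, and likewise for $K_\epsilon$. Since membership in $\cA$ requires $Q_0=0$ and $K_0=0$ in \eqref{relation}, your $P_\epsilon$ has $I_0(P_\epsilon)=+\infty$ and the conclusion fails for it. The paper avoids this by setting $Q_\epsilon(t)=Q*\psi_\epsilon(t)-Q*\psi_\epsilon(0)$ and similarly for $K_\epsilon$; with this correction the convolved path does satisfy \eqref{relation}, and the rest of your argument goes through unchanged.

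Second, your treatment of the initial cost asserts that $\rho_0=\gamma$ and that $I_\gamma(\rho_\epsilon(0))\to 0$. This is not correct: $I(P)<\infty$ only forces $h_\gamma(\rho_0)<\infty$, since deviations of the initial profile are allowed and penalized by $I_\gamma$. What must be shown is $h_\gamma(\rho_\epsilon(0))\to h_\gamma(\rho_0)$, and this does not follow from the weak convergence $\rho_\epsilon(0)\to\rho_0$ alone, because $h_\gamma$ is only lower semi-continuous for the weak topology (that gives the $\liminf$ inequality; for the $\limsup$ one can use Jensen's inequality, $h_\gamma(\rho_\epsilon(0))\le\int_0^\epsilon\iota_\epsilon(s)\,h_\gamma(\rho_s)\,ds$, together with some right-continuity of $s\mapsto h_\gamma(\rho_s)$ at $s=0$, which itself requires an argument). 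The paper is silent on this point, so you are not held to a higher standard of detail, but the claim as you state it is false and should be replaced by the convergence to $h_\gamma(\rho_0)$.
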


In the following, only the regularity of $K$ is needed in
order to construct a  drift $G$ adapted to the non-conservative current \eqref{eq: drift G}. 
However, the regularizing sequence has to satisfy the conservation law  (\ref{relation}) so that  
$\rho, Q, K$ will be approximated simultaneously.

\begin{proof}
 
The proof is based on a time convolution and follows similar steps as in Lemma \ref{lem: away}.
We just recall the salient features of the proof (see \cite{KL} for further details).

Let $\psi_\epsilon$ be  a $C^\infty$ approximation of unity  such that $\psi_\epsilon=0$ outside
$[0,\epsilon]$ and $\int_0^\epsilon \psi_\epsilon =1$. To take the
convolution product of $(\rho,Q,K)$ with $\psi_\epsilon$, we have to
extend the path $(\rho,Q,K)$ beyond the time $T$. Let $r$ be the solution of
the hydrodynamic equation \eqref{limitehydro}
\begin{eqnarray*}
 \partial_s r_s &= \dfrac{1}{2}\Delta r_s+C(r_s)-A(r_s), \qquad
{\rm with} \qquad  r_0&=\rho_T\ .
\label{restsolution}
\end{eqnarray*}
 We set for $s>T$
\begin{equation*}
 \rho_s=r_{s-T}, 
 \qquad
Q_s=Q_T-\dfrac{1}{2}\int_T^s du \, \nabla \rho_u ,
\qquad K_s=K_T+\int_T^s du \, [C(\rho_u)-A(\rho_u)]\ .
\end{equation*}
Now we define 
\begin{eqnarray}
 \label{eq: regular path}
\begin{cases}
\rho_\epsilon(t,x) = \rho*\psi_\epsilon (t,x),\\
Q_\epsilon(t,x) = Q*\psi_\epsilon (t,x) - Q*\psi_\epsilon (0,x),\\
K_\epsilon(t,x) = K*\psi_\epsilon (t,x) - K*\psi_\epsilon (0,x) \, . 
\end{cases}
\end{eqnarray}
The path $(\rho_\epsilon, Q_\epsilon,K_\epsilon)$  satisfies the relation (\ref{relation}) and has initial currents equal to 0. We use the decomposition \eqref{decomposition} and approximate independently  the functionals $I_1$ and $I_2$ by using convexity properties 
deduced from Assumption ({\bf L1}).
\end{proof}

\subsubsection{Non regular drifts}
\label{sec: drift non conserved}

Thanks to Lemmas \ref{lem: away} and  \ref{timereg}, it is enough to consider a trajectory $(\gr,Q,K)$ regular in time with a density uniformly bounded away from 0 and 1. 
We are going to associate to $(\gr,Q,K)$ the drifts $H,G$ as for the trajectories in $\cS$ introduced in Definition \ref{def: cS}. 
Note that the drifts $H,G$ can be  non regular in space.

As ${\partial_s K}$ exists, the drift $G$ is defined (as for the regular
trajectories) as the solution of
\begin{equation}
\forall x \in [-1,1], \qquad
{\partial_s K}(s,x) = C(\rho(s,x))e^{G(s,x)} - A(\rho(s,x))e^{-G(s,x)}.
\label{eq: drift G}
\end{equation}
Note that $K,G$ solve the equation \eqref{eq: weak K 2} and as in the regular case
(\ref{rem: regularsolution})
\begin{equation}
I_2 (\rho,K)=\int_0^T\left<C(\rho_s)\
\left(1-e^{G_s}+G_se^{G_s}\right)\right> + \left<A(\rho_s)\
\left(1-e^{-G_s}-G_se^{-G_s}\right)\right>ds. \label{bonIdeux}
\end{equation}

The functional $I_1$ is the same as the functional of the SSEP (without Glauber rates).
Thus the drift of the conservative dynamics can be approximated thanks to the Riesz representation theorem as in \cite{KL, BDGJL2}.
For $(\rho,Q)$ such that $I_1(\rho,Q)<\infty$, there is $H$ in $\bbL^2([0,T],\bbH_1(]-1,1[) )$  such that
\begin{equation}
I_1(\rho,Q)= \int_0^T\int_{-1}^1 \dfrac{\sigma(\rho(s,x))}{2} \, (\nabla H(s,x))^2 \; dx\, ds \, .
\label{bonIun}
\end{equation}
Moreover \eqref{eq: weak Q 2} holds.
Note that the time regularity is not needed to derive \eqref{bonIun}.

\medskip

Finally, we check that $\gr$ is a weak solution of \eqref{limitehydro general} with the drifts $G,H$.
As the large deviation functional is finite, $\gr$ belongs to $\cA$ so that $\cQ(\gr)$ is finite and
\eqref{eq: weak bd 2} is satisfied. Combining the conservation law \eqref{relation}
with \eqref{eq: weak Q 2} and \eqref{eq: weak K 2} shows that \eqref{weaksoldrift 2} holds.

\subsubsection{Approximation paths in $\cS$}
\label{subsec: approx}

We approximate now $H,G$ by regularized drifts $H_n,G_n$ and prove that the associated sequence of paths $(\gr^{(n)}, Q^{(n)}, K^{(n)})$ in $\cS$ converges to $(\gr,Q,K)$.

Let $H_n$ be a sequence of $C^\infty$ functions converging to $H$ in $\bbL^2([0,T],\bbH_1(]-1,1[) )$.
Since $C$ and $A$ satisfy assumption $(\textbf{L1})$ and $\rho$ is uniformly bounded away from
$0$ and $1$ (Lemma \ref{lem: away}), $C$ and $A$ are either uniformly
bounded away from $0$, or uniformly equal to 0. 
We will consider only the case $A>0,C>0$ as the other case follows in the same way.
From  (\ref{bonIdeux}), we get that
$|G|e^{|G|}\in \bbL^1([0,T]\times[-1,1])$, so that there is $G_n$ such that
\begin{equation}
\int_0^T\int_{-1}^1|G_ne^{\pm G_n}-G e^{\pm G}| \xrightarrow[n\to\infty]{}0\qquad\textrm{    and    }
  \qquad\int_0^T\int_{-1}^1|e^{\pm G_n}- e^{\pm G}|\xrightarrow[n\to\infty]{}0.
\label{Gconvergence}
\end{equation}

Let $(\gr^{(n)},Q^{(n)},K^{(n)})$ be the sequence of paths in $\cS$ associated to the regular drifts $H_n, G_n$.
In particular $\rho^{(n)}$ is  the weak solution of
\begin{align}
\left\{\begin{array}{l}
\partial_t \rho^{(n)} =\dfrac{1}{2}
\Delta \rho^{(n)} -\nabla (\sigma(\rho^{(n)})\nabla H_n)  + C(\rho^{(n)})e^{G_n} - A(\rho^{(n)})e^{-G_n}\\
\rho^{(n)}(t,\pm 1)= \bar \rho_\pm, \qquad \rho^{(n)}(0,\cdot)=\rho (0,\cdot) \, ,
\end{array}\right.
\label{edp}
\end{align}
and $Q^{(n)}$ and $K^{(n)}$ are defined as
\begin{eqnarray}
\label{eq: weak approx n}
Q^{(n)} (t,x) &=& \int_0^t \big(-\dfrac{1}{2}\nabla \rho^{(n)} (s,x) + \sigma(\rho^{(n)}(s,x))\nabla H_n(s,x) \big) \, ds,\\
K^{(n)} (t,x) &=& \int_0^t \left(C(\rho^{(n)}(s,x))e^{G_n(s,x)}-A(\rho^{(n)}(s,x))
e^{-G_n(s,x)}\right)ds\ . \nonumber
\end{eqnarray}

From energy estimates (using similar bounds as in the Appendix \eqref{eq: borne d'energie}) we get  
\begin{equation}
\int_0^Tdt\int_{-1}^1dx(\nabla \rho^{(n)})^2  \leqslant  C,
\label{bornegradrhon}
\end{equation}
where $C$ is a constant independent of $n$. 
As $\rho^{(n)}$ and $\nabla \rho^{(n)}$ are bounded sequences in $\bbL^2$, they are tight in the weak topology. 
We want to check the uniqueness of the limiting points.
Let $R$ be a limit point of $\rho^{(n)}$. We will prove
that $R$ is a weak solution of \eqref{limitehydro general}.
From Section \ref{sec: drift non conserved}, we know that $\gr$ is also a weak solution of \eqref{limitehydro general}
so that the uniqueness of the weak solutions (see the Appendix) will imply that $\gr = R$.

\medskip

We want to take the limit in the weak formulation of (\ref{edp}). 
By construction, for any smooth function $\varphi$ on $[-1,1]$, we have
\begin{equation*}
 \int_0^tds\int_{-1}^1dx\rho^{(n)}(s,x)\nabla\varphi(x)=\left[\bar \gr_+\varphi(1)-\bar \gr_-\varphi(-1)\right]
-\int_0^tds\int_{-1}^1dx\nabla\rho^{(n)}(s,x)\varphi(x).
\end{equation*}
By weak convergence of the subsequence, one has
\begin{equation*}
 \int_0^tds\int_{-1}^1 \, dx \, R (s,x) \nabla \varphi(x) = \left[\bar \gr_+\varphi(1)-\bar \gr_-\varphi(-1)\right]
- \int_0^tds\int_{-1}^1 dx \nabla R(s,x)\varphi(x).
\end{equation*}
Furthermore \eqref{bornegradrhon} implies that the limit  $\nabla R$ is also in $\bbL^2$.
It remains to take the limit in $n$ in the equation 
\begin{multline}
 \int_{-1}^1\rho^{(n)}(t)\varphi=\int_{-1}^1 \gamma \varphi
 -\dfrac{1}{2}\int_0^t\int_{-1}^1\nabla\rho^{(n)}\nabla\varphi
+\int_0^t\int_{-1}^1\nabla \varphi\sigma(\rho^{(n)})\nabla
H_n\\+\int_0^t\int_{-1}^1C(\rho^{(n)})e^{G_n}\varphi
-\int_0^t\int_{-1}^1A(\rho^{(n)})e^{-G_n}\varphi \, .
\label{edpenN}
\end{multline} 
The difficulty is to treat the non linear terms.
We proceed term by term and start with the non-linearity $C$  in \eqref{edpenN} (the term in $A$ can be controlled in the same way). 
By \eqref{Gconvergence}, $\exp(G_n)$ converges to $\exp(G)$ in $\bbL^1$  thus it is enough to check that 
\begin{eqnarray}
\label{eq: decomposition 0}
\lim_{n \to \infty} \int_0^t\int_{-1}^1C(\rho^{(n)})e^{G}\varphi 
= \int_0^t\int_{-1}^1C(R)e^{G}\varphi \, .
\end{eqnarray}
We write
\begin{eqnarray}
\label{eq: decomposition}
&& \int_0^t\int_{-1}^1  \, \big( C(\rho^{(n)}) - C(R) \big) \varphi  \, e^{G}
= 
\int_0^t\int_{-1}^1  \, \big( C(\rho^{(n)}) - C(\rho^{(n)} *\iota_\delta ) \big) \varphi  \, e^{G}\\
&& \qquad + \int_0^t\int_{-1}^1  \, \big( C(\rho^{(n)} *\iota_\delta )  - C( R *\iota_\delta ) \big) \varphi  \, e^{G}
+ \int_0^t\int_{-1}^1  \, \big( C( R *\iota_\delta ) - C(R) \big) \varphi  \, e^{G} \, . \nonumber
\end{eqnarray}
As $\gd$ goes to 0, $R *\iota_\delta$ converges to $R$. 
The convergence in the weak topology implies that $\rho^{(n)} *\iota_\delta$ converges a.s. to $R *\iota_\delta$ when $n$ goes to infinity. 
Suppose that 
\begin{eqnarray}
\lim_{\gd \to 0} \sup_n \left| \int_0^t\int_{-1}^1 \,  \Big( C(\rho^{(n)}) -  C(\rho^{(n)} * \iota_\delta ) \Big)  e^{G}\varphi
\right| = 0 \, ,
\label{eq: converge unif}
\end{eqnarray} 
then  choosing $\gd$ small and then $n$ large, the convergence in \eqref{eq: decomposition 0} follows.


To derive \eqref{eq: converge unif}, we first note that for $x \in ]-1 +\gd, 1-\gd[$
\begin{eqnarray}
\label{eq: borne convolution}
&&  \rho^{(n)}*\iota_\delta (x) - \rho^{(n)} (x) \\
&& = \int_{-1}^1 dy \,  \iota_\delta (x- y) [ \rho^{(n)} (y) -  \rho^{(n)} (x)]
= \int_{-1}^1 dy \,  \iota_\delta (x- y) \int_x^y du \,  \partial_u \rho^{(n)} (u)  \nonumber \\
&& \leqslant 
\sqrt{\int_{-1}^1 du \,  (\partial_u \rho^{(n)} (u))^2 } \;
\int_{-1}^1 dy \,  \iota_\delta (x- y) \sqrt{|x-y|} \leqslant 2 \sqrt{\gd}
\sqrt{\int_{-1}^1 du \,  (\partial_u \rho^{(n)} (u))^2 }
\, .  \nonumber
\end{eqnarray}
Using the uniform bound (\ref{bornegradrhon})  there is a constant $C$ independent of $n$ such that 
\begin{eqnarray}
\label{eq: approx id}
\int_0^t \int_{-1}^1
\big( \rho^{(n)}*\iota_\delta-\rho^{(n)} \big)^2 \leq C \gd \, .
\end{eqnarray}

It remains to check \eqref{eq: converge unif}.
Let $\frm$ be the uniform measure on $[0,t] \times [-1,1]$ given by $ \frac{1}{2t} ds  dx$.
Given $\psi (s,x)$ a non-negative function on $[0,t] \times [-1,1]$ such that 
$ \frm \big( \psi \big) = 1$, then the entropy inequality implies that for any function $\gp$
\begin{eqnarray}
\label{eq: entropy ineq}
\frm \big( \gp(s,x) \psi(s,x) \big) 
\leqslant 
\frm \big( \psi(s,x) \log \big( \psi(s,x) \big) \big)   + \log \frm \big( \exp \big(   \gp(s,x) \big) \big) \, .
\end{eqnarray}
Since $A$ and $C$ are positive then $\frm( e^{|G|} |G|)$ is bounded and \eqref{eq: entropy ineq} implies
\begin{eqnarray*}
&& 
\frm \left(  | C(\rho^{(n)} *\iota_\delta) - C(\rho^{(n)})| |\varphi| \frac{e^{|G|}}{\frm(e^{|G|}) } \right) \\
&& \qquad 
\leqslant 
\gep_\gd \,   \frm \left(  \frac{e^{|G|}}{\frm(e^{|G|}) }  \log \frac{e^{|G|}}{\frm(e^{|G|}) }  \right)
+ 
\gep_\gd  \log  \frm \left( \exp \big( \frac{1}{\gep_\gd} | C(\rho^{(n)}*\iota_\delta) - C(\rho^{(n)})| |\varphi| \big) \right) \, ,
\end{eqnarray*}
where $\gep_\gd$ vanishes (in a suitable way to be determined later) as $\gd$ goes to 0.
Recall that 
$$
\forall x \in [0,M], \quad \exp(x) \leq 1 + x + \exp(M) x^2 \qquad
{\rm and} \quad \log(x) \leq x-1 \,. 
$$
As $| C(\rho^{(n)}*\iota_\delta) - C(\rho^{(n)})| |\varphi| \leq 2 \| C\|_\infty \| \varphi \|_\infty$, the previous inequalities imply 
\begin{eqnarray*}
&& 
 \log  \frm \left( \exp \big( \frac{1}{\gep_\gd} | C(\rho^{(n)}*\iota_\delta) - C(\rho^{(n)})| |\varphi| \big) \right)\\
&& 
\leqslant 
\frac{1}{\gep_\gd^2 }  \exp \left( \frac{\| C\|_\infty \| \varphi \|_\infty }{ \gep_\gd} \right) 
\frm \left( | C(\rho^{(n)}*\iota_\delta) - C(\rho^{(n)})| |\varphi| 
+  \Big(  | C(\rho^{(n)}*\iota_\delta) - C(\rho^{(n)})| |\varphi| \Big)^2
\right) \, ,
\end{eqnarray*}
We choose  $\gep_\gd$ such that 
\begin{eqnarray*}
\lim_{\gd \to 0} \gep_\gd = 0, 
\qquad
\lim_{\gd \to 0} 
\frac{\sqrt{\gd} }{\gep_\gd}  \exp \left( \frac{\| C\|_\infty \| \varphi \|_\infty }{ \gep_\gd} \right) = 0 \, .
\end{eqnarray*}
Combining the previous inequality and (\ref{eq: approx id}), we conclude that  \eqref{eq: converge unif} holds.

We turn now to the non-linearity $\gs$. Using the decomposition
\begin{eqnarray*}
&& \int_0^t \int_{-1}^1 \, \big[  \gs \big( \rho^{(n)} \big)-\gs(R) \big] \nabla \varphi_s \, \nabla H_s 
=
\int_0^t \int_{-1}^1 \, \big[  \gs \big( \rho^{(n)} \big) - \gs \big( \rho^{(n)}*\iota_\delta \big) \big] \nabla \varphi_s \, \nabla H_s \\
&& \quad +
\int_0^t \int_{-1}^1 \, \big[  \gs \big( \rho^{(n)}*\iota_\delta \big) - \gs \big( R*\iota_\delta \big) \big] \nabla \varphi_s \, \nabla H_s 
+ \int_0^t \int_{-1}^1 \, \big[  \gs \big( R*\iota_\delta \big)-\gs(R) \big] \nabla \varphi_s \, \nabla H_s 
\end{eqnarray*}
the last two terms vanish in the limit as in \eqref{eq: decomposition}. The first term can be controlled thanks to the uniform bound \eqref{eq: approx id} and a Cauchy-Schwartz estimate as $\nabla \varphi \, \nabla H$ belongs to  $\bbL^2$.
This concludes the convergence of \eqref{edpenN}.

\medskip

Following the same proof, \eqref{eq: weak approx n}  implies that $(Q^{(n)}, K^{(n)})$ converges weakly to $(Q,K)$.

\subsubsection{$I-$convergence}

We finally complete the proof of Lemma \ref{lem: I-regularite}.
Let $(\rho^{(n)},Q^{(n)},K^{(n)})$ be the regularizing sequence defined in \eqref{edp}.
We start by proving 
\begin{eqnarray}
\limsup_{n \to \infty} I_0 (\rho^{(n)},Q^{(n)},K^{(n)}) \leqslant I_0(\rho,Q,K) \, .
\label{eq: lower bound regul}
\end{eqnarray}
From the expression (\ref{bonIun}) of $I_1$ and the weak convergence of $\rho^{(n)}$ to $\gr$, we get
\begin{eqnarray*}
I_1(\rho,Q)
&=& \int_0^T\int_{-1}^1 \frac{\sigma(\rho(s,x))}{2} \nabla H(s,x)^2  dx ds
= \lim_{\delta\to 0}  \int_0^T\int_{-1}^1 \dfrac{\sigma([\rho_s*\iota_\delta](x))}{2} \nabla H(s,x)^2 dx ds\\
&=&\lim_{\delta\to 0}\lim_{n\to\infty}\dfrac{1}{2}
\int_0^T\int_{-1}^1 \nabla H(s,x)^2 \sigma([\rho_s^{(n)}*\iota_\delta](x)) \, dx ds\\
&\geqslant& \lim_{\delta\to 0}\limsup_{n\to\infty}\dfrac{1}{2}
\int_0^T\int_{-1}^1 (\nabla H(s,x))^2 \big[ \sigma(\rho^{(n)}_s)*\iota_\delta(x) \big] \, dx ds \, ,
\end{eqnarray*}
where we used the concavity of $\sigma$ in the last inequality.
Notice that
\begin{eqnarray*}
&& \int_0^T\int_{-1}^1 (\nabla H(s,x))^2 \big[ \sigma(\rho^{(n)}_s)*\iota_\delta(x) \big] \, dx ds
= \int_0^T\int_{-1}^1 (\nabla H(s,x))^2 \sigma \big( \rho^{(n)}_s (x) \big) \, dx ds\\ 
&& \qquad \qquad \qquad 
+  \int_0^T\int_{-1}^1 \Big[ (\nabla H_s)^2*\iota_\delta(x) 
-  (\nabla H(s,x))^2 \Big] \sigma \big( \rho^{(n)}_s (x) \big) \, dx ds \, .
\end{eqnarray*}
As $\gs$ is bounded and $(\nabla
H)^2$ is integrable, the last term vanishes uniformly in $n$ when $\gd$ goes to 0, so that 
\begin{eqnarray*}
I_1(\rho,Q) \geqslant  \limsup_{n\to\infty}\dfrac{1}{2}\int_0^T\int_{-1}^1 \nabla
H(s,x)^2\sigma \big( \rho^{(n)}_s(x) \big) dx\ ds.
\end{eqnarray*}
As $\nabla H^{(n)}$ converges to $\nabla H$ in $\bbL^2$ and $\sigma$ is bounded, we conclude that
\begin{equation}
I_1(\rho,Q) \geq \limsup_{n \to \infty} I_1(\rho^{(n)},Q^{(n)}) \, .
\label{Iungrand}
\end{equation}

Similarly for $I_2$, the concavity and  boundedness of $C$ and $A$
imply that 
$$
 I_2(\rho,K)\geqslant \limsup_{n \to \infty}  \int_0^T\int_{-1}^1C(\rho^{(n)})\
 \left(1-e^{G}+G e^{G}\right)
+A(\rho^{(n)})\ \left(1-e^{-G}-G e^{-G}\right).$$
Then we use
the convergence properties on $G^{(n)}$ (\ref{Gconvergence}) and the
fact that $C$ and $A$ are bounded to conclude that
\begin{equation}
I_2(\rho,K)\geqslant \limsup_{n \to \infty} 
I_2(\rho^{(n)},K^{(n)})\label{Ideuxgrand}
\end{equation}
Estimates (\ref{Iungrand}) and (\ref{Ideuxgrand}) imply 
\eqref{eq: lower bound regul}.

The converse inequality
\begin{eqnarray}
\liminf_{n \to \infty} I_0 (\rho^{(n)},Q^{(n)},K^{(n)}) \geq I_0(\rho,Q,K) \, ,
\label{eq: lower bound regul 2}
\end{eqnarray}
can be deduced from the weak convergence of $(\rho^{(n)},Q^{(n)},K^{(n)})$ to $(\rho,Q,K)$ and from
the decomposition \eqref{tildei}: the limit follows for the term $\tilde{I}_0$ from its lower semi-continuity
and the convergence of the second term $\int_0^T \, dt \, \bra C(\rho^{(n)}_t ) + A(\rho^{(n)}_t) \ket $ can be obtained 
as in \eqref{eq: decomposition 0}.

\section{The density large deviations}
\label{sec: densiteGD}

In this section, we recover the density large deviation principle 
(first derived in \cite{JLV}) by optimizing the functional $I$ over the currents. 
We assume that the rates $A,C$ satisfy assumptions ({\bf L1, L2}).
The contraction principle \cite{DZ}  implies that the density large deviation functional is given by 
\begin{equation}
\label{eq: LD density}
\cF (\gr) = \inf_{ (Q,K) } I(\gr,Q,K) \, ,
\end{equation}
where the infimum is taken over the currents $(Q,K)$.
Using the approximation procedure of section \ref{sec: Approximation for general trajectories}, we will check that 
it is enough to consider regular density profiles and the modified functional 
\begin{equation}
\label{eq: LD density 1}
\hat \cF (\gr) = \inf_{ (Q,K)_\reg } I(\gr,Q,K) \, ,
\end{equation}
where the infimum is taken now over the regular currents $(Q,K)$.

\medskip

\noindent
{\bf Step 1.}
In this first step, the explicit solution of the variational problem \eqref{eq: LD density 1} is computed for regular trajectories.
The functional \eqref{eq: LD density 1} can be rewritten as (see \eqref{rem: regularsolution})
\begin{eqnarray}
\label{eq: GD temp} 
\hat \cF  (\gr) = \inf_{ (G,H) }
\left \{
\dfrac{1}{2}\int_0^T
\left< \gs( \rho_s ) \,  |\nabla H_s|^2 \right>ds +
\int_0^T\left<C(\rho_s) \, \left(1-e^{G_s}+G_se^{G_s}\right)\right> \, ds \right.
\nonumber \\
\left.
+\int_0^T\left<A(\rho_s)\
\left(1-e^{-G_s}-G_se^{-G_s}\right)\right> \, ds \right \} \, ,
\end{eqnarray} 
where $G,H$ are smooth functions such that the conservation relation \eqref{relation} holds
\begin{equation}
\label{eq: constraint H G}
\partial_s  \rho_s  =
\dfrac{1}{2} \Delta \rho_s - \partial_x \Big( \gs \big( \rho_s \big) \nabla H_s \Big)  + \big( C(\rho_s)e^{G_s}-A(\rho_s)e^{-G_s} \big) .
\end{equation}

\medskip

We first check that the infimum is reached for functions $H$ with boundary conditions $H(s,-1) = H(s,1) =0$ for any time $s$.
Let $f$ be a smooth function in $[0,T]$.
Perturbing $H$ into $H(s,x) + f(s) \int_0^x \frac{1}{\gs(\gr(s,u))} \, du$, we see that the conservation law \eqref{eq: constraint H G} is preserved (it simply amounts to adding a constant conservative current) and 
\begin{eqnarray*}
&& \int_0^T \left< \gs( \rho_s ) \,  \left| \nabla H_s + \frac{\ga \, f (s) }{\gs(\gr(s,x))} \right|^2 \right>ds 
= \int_0^T \left< \gs( \rho_s ) \,  \left| \nabla H_s \right|^2 \right>ds 
\\
&& \qquad \qquad + 2 
\int_0^T \big( H(s,1) - H(s,-1) \big) f(s)
+  \int_0^T \left \bra  \frac{\big( f(s) \big)^2}{\gs(\gr(s,x))} 
\right \ket \, ds \, .
\end{eqnarray*} 
As $H$ minimizes the integral, this implies that $H$ vanishes at the boundaries.

\medskip

Suppose that an extremum is reached at $(H,G)$ and consider a perturbation with the new drifts $H +h$ and $G+ g$.
Then the constraint \eqref{eq: constraint H G} implies that $h,g$ satisfy the relation
\begin{equation}
\label{eq: constraint peturb}
\partial_x \Big(  \gs \big( \rho_s \big) \nabla h_s \Big) 
+  g_s \big ( C(\rho_s)e^{G_s}+A(\rho_s)e^{-G_s} \big) = 0 .
\end{equation}
A perturbation of \eqref{eq: GD temp} around the extremum  $H,G$ leads to 
\begin{eqnarray}
\label{eq: 106}
0= \int_0^T
\left< \gs(\rho_s) \, \nabla H_s \, \nabla h_s \right> \, ds +
\int_0^T\left< g_s G_s \left(C(\rho_s) e^{G_s} +A(\rho_s) e^{-G_s}\right) \right> \, ds \, .
\end{eqnarray} 
Since $H$ vanishes at the boundaries, 
the relation \eqref{eq: constraint peturb}  combined to \eqref{eq: 106} leads to 
\begin{eqnarray*}
0= 
\int_0^T\left< g_s [G_s - H_s] \left(C(\rho_s) e^{G_s} +A(\rho_s) e^{-G_s}\right) \right> \, ds \, .
\end{eqnarray*} 
This holds for any $g$ so that  the extremum  is such that $G=H$ with $H$ determined by 
\begin{equation}
\label{eq: minimum G,H}
\partial_t \rho=\dfrac{1}{2}\Delta \rho
-\nabla\left(\sigma(\rho)\nabla H\right)+C(\rho)e^H-A(\rho)e^{-H} \, .
\end{equation}
Thus if $H$ satisfies \eqref{eq: minimum G,H} and $G = H$, then an extremum is reached for the corresponding currents $(\hat Q, \hat K)$.
Since the functional $I(\gr,Q,K)$ is convex wrt $(Q,K)$ (thanks to the representation
\eqref{J1}, \eqref{J2}), the extremum $(\hat Q, \hat K)$ has to be a global minimum, thus
\begin{eqnarray*}
\hat \cF (\gr) = 
\dfrac{1}{2}\int_0^T
\left< \gs( \rho_s ) \,  |\nabla H_s|^2 \right>ds +
\int_0^T\left<C(\rho_s) \, \left(1-e^{H_s}+ H_s e^{H_s} \right)\right> \, ds
\nonumber \\
+\int_0^T\left<A(\rho_s)\
\left(1-e^{-H_s}- H_s e^{-H_s}\right)\right> \, ds \, .
\end{eqnarray*}

\medskip

\noindent
{\bf Step 2.}
To approximate $\cF$ \eqref{eq: LD density} in terms of $\hat \cF$ \eqref{eq: LD density 1}, we first check that the minimum is reached in \eqref{eq: LD density}.
Consider a sequence $(Q^n,K^n)$ which realizes the infimum. By the tightness argument (section \ref{fermes})
the sequence belongs to a compact set and therefore has a weak limit $(Q^\star,K^\star)$. 
From the lower semi-continuity of the functional $(Q,K) \to I(\gr,Q,K)$,  this weak limit is a minimizer
\begin{equation}
\label{eq: LD density minimum}
\cF (\gr) =  I(\gr,Q^\star,K^\star) \, .
\end{equation}

Let $(\gr_\gep, Q^\star_\gep,K^\star_\gep)$ be a regular sequence (as in \eqref{edp}) converging to  $(\gr, Q^\star,K^\star)$ such that 
\begin{equation*}
I (\gr_\gep, Q^\star_\gep,K^\star_\gep) \to  I(\gr,Q^\star,K^\star) \, .
\end{equation*}
As $\hat \cF (\gr_\gep) \leq  I (\gr_\gep, Q^\star_\gep,K^\star_\gep)$, one has
\begin{equation*}
\limsup_{\gep \to 0} \hat \cF (\gr_\gep)
\leq 
\lim_{\gep \to 0} I (\gr_\gep, Q^\star_\gep,K^\star_\gep) =  \cF (\gr)  \, .
\end{equation*}

Using \eqref{eq: minimum G,H}, there are regular $(\hat Q_\gep, \hat K_\gep)$
such that 
\begin{equation*}
\hat \cF (\gr_\gep) =  I (\gr_\gep, \hat Q_\gep, \hat K_\gep) \, .
\end{equation*}
The sequence $(\gr_\gep, \hat Q_\gep, \hat K_\gep)$ has a bounded large deviation cost and thus
it belongs to a compact set.
There is a subsequence such that $(\hat Q_\gep, \hat K_\gep)$ converges weakly to $(\hat Q, \hat K)$.
One gets
\begin{equation*}
\liminf_{\gep \to 0} \hat \cF (\gr_\gep) \geq   I (\gr, \hat Q, \hat K) \geq \cF (\gr) \, .
\end{equation*}
This limit follows from the decomposition \eqref{tildei}. 
The term $\tilde{I}_0$ converges by lower semi-continuity and the convergence of the second term $\int_0^T \, dt \, \bra C(\gr_\gep (t) ) + A( \gr_\gep (t)) \ket $ can be obtained 
as in \eqref{eq: decomposition 0} ($\gr_\gep$ is just a subsequence extracted from $\gr^{(n)}$).

Combining both estimates, we deduce that $\cF$ can be approximated by $\hat \cF$.

\section{Appendix : Uniqueness of the weak solutions}
\label{app: Uniqueness}

Let $H$ be in $\bbL^2([0,T],\bbH_1(]-1,1[) )$ and $|G| \exp(|G|)$ in $\bbL^1([0,T] \times ]-1,1[)$.
Given an initial data $\gga$, a weak solution of 
\begin{equation}
\left\{\begin{array}{lcl}
\partial_t  \rho &=& \frac{1}{2}\Delta  \rho 
- \partial_x \left( \gs \big( \rho \big) \partial_x H \right)
+ C(\rho)e^{G}-A(\rho)e^{-G}\\
\bar  \rho(t,\pm 1)&=& \bar  \rho_\pm, \qquad \bar  \rho(0,x) = \gamma(x)
\end{array}\right.  
\label{limitehydro general}
\end{equation}
is defined as :
\begin{itemize}
\item 
The density $\gr$ is in $\bbL^2 \big( [0,T], \bbH^1(]-1,1[) \big)$, i.e. there is 
a function in $\mathbb{L}^2 ([0,T] \times ]-1,1[)$ which will be denoted by 
$\nabla \rho$ such for every $t\in[0,T]$ and every function $\varphi\in C^1([-1,1])$
\begin{equation}
\label{eq: weak bd 2}
\int_0^t \, ds \, \int_{-1}^1 dx\rho(s,x)\nabla
\varphi(x) -\{\bar \gr_+\varphi(1)-\bar \gr_-\varphi(-1)\}t 
= \int_0^t \, ds \, \int_{-1}^1dx \, \nabla \rho (s,x) \varphi(x) \, ,
\end{equation}
where $\bar \gr_\pm$ are fixed boundary conditions.
\item
For every $t\in[0,T]$ and every function $\varphi\in C^1([-1,1])$
vanishing at the boundaries,
\begin{multline}
\int_{-1}^1dx\rho(t,x)\varphi(x)-\int_{-1}^1dx\gamma(x)\varphi(x)
= - \int_0^tds\int_{-1}^1 \, dx \, \nabla \rho (s,x)\nabla \varphi(x)\\
+\int_0^tds\int_{-1}^1 dx \, \gs \big( \rho(s,x) \big)\nabla H(s,x)\nabla \varphi(x)
\\
+\int_0^tds\int_{-1}^1dx\left(C(\rho(s,x))e^{G(s,x)}-A(\rho(s,x))e^{-G(s,x)}\right)\varphi(x) \, .
\label{weaksoldrift 2}
\end{multline}
\end{itemize}
The hydrodynamic limit \eqref{limitehydro} corresponds to $H = G = 0$.

\medskip

In this Appendix, we derive the uniqueness of the weak solutions.
The main technical difficulty  comes from the fact that $G, \partial_x H$ are unbounded
(see  \cite{evans} for bounded drifts).  Note that at this stage Assumption $(\textbf{L1})$ is irrelevant.
We will rely on Assumption $(\textbf{L2})$ on $A,C$ which can be interpreted as follows. 
Equation \eqref{limitehydro general} in a strong form reads 
\begin{eqnarray*}
\partial_t \rho(t,x)= \Delta \rho (t,x) - \partial_x \big(   \gs \big( \rho(t,x) \big) \nabla H(t,x) \big)
- V^\prime_{G(t,x)} (\rho(t,x)) \, ,
\end{eqnarray*}
where the reaction term is determined by the space-time dependent potential $V_{G(t,x)}$ with  $V^\prime_{g} (\rho) = - e^g C(\rho) + e^{-g} A(\rho)$. Assumption {\bf (L2)} ensures that the potential $V_{G(t,x)}$ is convex uniformly in $G(t,x)$.
Thus the reaction and the diffusion terms are both contractions and  the solution will be unique.
 We adapt to our framework the argument of \cite{LMS, FLM}.

\medskip

We consider two initial datas $\gr^1_0,\gr^2_0$ and the corresponding weak solutions $\gr^1 ,\gr^2$. We are going to prove that the $\bbL^1$-norm $\| \gr^1_t  - \gr^2_t\|_1$ decreases in time.
For a given $\gd>0$, we introduce the regularized absolute value
\begin{eqnarray}
\label{eq: cut-off}
U_\gd (v) = \frac{v^2}{2 \gd} 1_{ \{ |v| \leq \gd \} } + \Big(|v|-\frac{\gd}{2} \Big) 
1_{ \{ |v| > \gd \} } \, .
\end{eqnarray}
Define
\begin{eqnarray}
\label{eq: subset}
V_\gd  = \Big \{ (s,x) \in [0,T] \times [-1,1], \qquad \text{such that} \quad
| \gr^1(s,x)  - \gr^2(s,x)| \leq \gd \, \Big \}  \, .
\end{eqnarray}

\medskip

\noindent
{\bf Step 1.} 
We are going to check that for times $t<t'$
\begin{multline}
\label{eq: integration parties}
\int_{-1}^1 \, dx\, U_\gd \Big( \gr^1(t',x)  - \gr^2 (t',x) \Big) 
- \int_{-1}^1 \, dx\, U_\gd \Big( \gr^1 (t,x)  - \gr^2 (t,x) \Big) \\
 = - \frac{1}{\gd} 
\int_t^{t'}  \int_{-1}^1 ds dx \, 
1_{ \{ (x,s) \in V_\gd \} } \nabla ( \gr^1- \gr^2) (s,x) \,
\Big\{  \nabla ( \gr^1 - \gr^2)(s,x)  + \bar \gs(s,x)  \nabla H(s,x) \Big\}\\
+ \frac{1}{\gd} \int_t^{t'}  \int_{-1}^1  ds dx \, U_\gd^\prime \big( \gr^1(s,x)  - \gr^2(s,x) \big)
\Big\{ \bar C(s,x) e^{G(s,x)} - \bar A(s,x) e^{-G(s,x)}  \Big\} 
\end{multline}
where we set 
\begin{eqnarray*}
\bar A (t,x) &=& A(\gr^1(t,x))- A(\gr^2(t,x)), \quad
\bar C (t,x) = C(\gr^1(t,x)) - C(\gr^2(t,x)),  \\
\bar \gs (t,x) &=& \gs(\gr^1(t,x)) - \gs(\gr^2(t,x)) \, .
\end{eqnarray*}

To prove \eqref{eq: integration parties}, we follow the regularization scheme introduced in \cite{BLM} (see the proof of their Theorem 4.6).
For $\gep>0$, denote by $R^D_\gep$ : $[-1,1]^2 \to \bbR^+$ (resp $R^N_\gep$) the resolvent of the Dirichlet  Laplacian $\gD_D$ (resp Neumann $\gD_N$)
\begin{eqnarray*}
R^D_\gep = ({\rm Id} - \gep \gD_D)^{-1}, \qquad
R^N_\gep = ({\rm Id} - \gep \gD_N)^{-1} \, .
\end{eqnarray*}
The mollified trajectory is defined by 
\begin{eqnarray*}
\gr^\gep(t,x) = \bar \gr(x) +  R^D_\gep \big( \gr_t - \bar \gr)(x) \, ,
\end{eqnarray*}
where $\bar \gr$ stands for the linear profile between $\bar \gr_+$ and $\bar \gr_-$. 
Note that the resolvent of the Dirichlet  Laplacian preserves the boundary conditions of the mollified trajectory. 

As $\gr$ is a weak solution of \eqref{weaksoldrift 2}, one has
\begin{eqnarray*}
\partial_t \gr^\gep(t,x) &=&  \partial_t \int_{-1}^1 dy \, R^D_\gep (x,y) \big( \gr(t,y) - \bar \gr(y)) \\
&=&  \int_{-1}^1 dy \, \partial_y R^D_\gep (x,y) \Big\{ - \partial_y \big( \gr(t,y) - \bar \gr(y))
+ \gs\big( \rho(t,y)\big) \nabla H(t,y) \Big\}\\
&& \qquad \qquad + \int_{-1}^1 dy \, R^D_\gep (x,y) \left(C(\rho(t,y))e^{G(t,y)}-A(\rho(t,y))e^{-G(t,y)}\right) \, ,
\end{eqnarray*}
where we used the fact that $R^D_\gep (x,1) = R^D_\gep (x,-1) = 0$ for any $x$.

From the relation $\partial_y R^D_\gep (x,y) = - \partial_x R^N_\gep (x,y)$, one has
\begin{eqnarray}
\partial_t \gr^\gep(t,x) &=&  \partial_x \int_{-1}^1 dy \,  
R^N_\gep (x,y) \Big\{ \partial_y \big( \gr(t,y) - \bar \gr(y))
- \gs\big( \rho(t,y)\big) \nabla H(t,y) \Big\} \nonumber\\
&& \qquad \qquad + \int_{-1}^1 dy \, R^D_\gep (x,y) \left(C(\rho(t,y))e^{G(t,y)}-A(\rho(t,y))e^{-G(t,y)}\right) \, .
\label{eq: mollif}
\end{eqnarray}

Taking the time derivative we obtain
\begin{eqnarray*}
\partial_t
\int_{-1}^1 \, dx\, U_\gd \Big( \gr^{1,\gep}(t,x)  - \gr^{2,\gep}(t,x) \Big) 
= 
\int_{-1}^1 \, dx\, U_\gd^\prime \Big( \gr^{1,\gep}(t,x)  - \gr^{2,\gep}(t,x) \Big) 
\partial_t \Big( \gr^{1,\gep}(t,x)  - \gr^{2,\gep}(t,x) \Big) \\
\end{eqnarray*}
As $\partial_x R^D_\gep (x,y) = - \partial_y R^N_\gep (x,y)$, we get 
\begin{eqnarray*}
\partial_x  \, \big( \gr^{1,\gep}(t,x)  - \gr^{2,\gep}(t,x) \big)
&=& - \int_{-1}^1 dy \, \partial_y R^N_\gep(x,y) ( \gr^1 (t,y) - \gr^2  (t,y) ) \\
&=&  \int_{-1}^1 dy \,  R^N_\gep(x,y) (\partial_y \gr^1 (t,y) - \partial_y \gr^2  (t,y) )  \, .
\end{eqnarray*}
where we used that $\gr^1,\gr^2$ are in $\bbL^1 \big( [0,T],\bbH_1(]-1,1[) \big)$.
Thus we can write
\begin{eqnarray*}
\partial_x  \, \Big[ U_\gd ' \big( \gr^{1,\gep}(t,x)  - \gr^{2,\gep}(t,x) \big) \Big]
=  1_{\{x \in V_{\gd,t}^\gep \}} R^N_\gep \big( \partial_x ( \gr^1  - \gr^2 \big) \big) (t,x) \, ,
\end{eqnarray*}
with the notation 
\begin{eqnarray*}
\label{eq: subset}
V_{\gd,t}^\gep  = \Big \{ x \in [-1,1], \qquad \text{such that} \quad
| \gr^{1,\gep}(t,x)  - \gr^{2,\gep}(t,x)| \leq \gd \, \Big \}  \, .
\end{eqnarray*}

Using relation \eqref{eq: mollif}, one obtains
\begin{eqnarray*}
&& \partial_t
\int_{-1}^1 \, dx\, U_\gd \Big( \gr^{1,\gep}(t,x)  - \gr^{2,\gep}(t,x) \Big) \\
&& \qquad = - \int_{-1}^1 \, dx\, \int_{-1}^1 dy \,1_{\{x \in V_{\gd,t}^\gep \}} R^N_\gep \big( \partial_x ( \gr^1  - \gr^2 \big) \big) (t,x)
\\
&& \qquad \qquad \qquad 
  R^N_\gep (x,y) \Big\{ \partial_y \big( \gr^1 (t,y)  - \gr^2 (t,y) \big) 
+  \bar \gs (t,y)  \nabla H(t,y) \Big\} \\
&& \qquad  + \iint_{-1}^1 \, dx dy \,
U_\gd^\prime \big( ( \gr^{1,\gep}  - \gr^{2,\gep})(t,x) \big) \,
R^D_\gep (x,y) \left(\bar C (t,y) e^{G(t,y)}- \bar A (t,y) e^{-G(t,y)}\right) \, .
\end{eqnarray*}
Taking the limit as $\gep$ tends to 0, we recover \eqref{eq: integration parties}.

\medskip

\noindent
{\bf Step 2.} 

First note that from assumption {\bf (L2)}, one has
\begin{eqnarray*}
U_\gd^\prime \big( ( \gr^1  - \gr^2)(t,x) \big) \left(\bar C (t,y) e^{G(t,y)}- \bar A (t,y) e^{-G(t,y)}\right) \leq 0 \, .
\end{eqnarray*}
Thus the reaction term acts as a contraction.
As  $\gs(\gr) = \gr(1-\gr)$, then 
\begin{eqnarray*}
\forall (s,x) \in V_\gd, \qquad
\big| \bar \gs (s,x) \big| \leq \gd \, .
\end{eqnarray*}
Thus \eqref{eq: integration parties} implies
\begin{eqnarray}
\label{eq: borne d'energie}
&& \bra U_\gd \big( \gr^1_{t'}  - \gr^2_{t'} \big) \ket
- \bra U_\gd \big( \gr^1_t  - \gr^2_t \big) \ket 
\leq - \frac{1}{\gd} 
\int_t^{t'} \, ds \big \bra 1_{ \{ (x,s) \in V_\gd \} } \big( \nabla   \gr^1_s - \gr^2_s \big)^2 \big \ket \\
&& \qquad \qquad +    \int_t^{t'} ds \big \bra 1_{ \{ (x,s) \in V_\gd \} }
| \nabla  (\gr^1_s - \gr^2_s)| \,   |\nabla H_s| \ket
\, , \nonumber
\end{eqnarray}
using the fact that 
\begin{eqnarray*}
2 \big| \nabla  (\gr^1 - \gr^2) (s,x) \big| \,   |\nabla H(s,x)| 
\leq  
\frac{1}{\gd} \big| \nabla  (\gr^1 - \gr^2) (s,x) \big|^2  +  \gd  |\nabla H(s,x)|^2  \, ,
\end{eqnarray*}
we obtain
\begin{eqnarray*}
\bra U_\gd \big( \gr^1_{t'}  - \gr^2_{t'} \big) \ket
- \bra U_\gd \big( \gr^1_t  - \gr^2_t \big) \ket 
&\leq& - \frac{1}{ 2 \gd} 
\int_t^{t'} ds \, \bra  1_{ \{ (x,s) \in V_\gd \} } \; \big( \nabla   \gr^1_s - \gr^2_s \big)^2 \big \ket 
+   \gd  \int_t^{t'}  ds \, \bra  |\nabla H_s |^2 \ket \\
&& \leq 
  \gd \int_t^{t'}  ds \, \bra  |\nabla H_s |^2  \ket \, .
\end{eqnarray*}
Recall that $H$ belongs to $\bbL^2([0,T],\bbH_1(]-1,1[)$ thus as $\gd$ tends to 0, the LHS converges to 0. Furthermore $U_\gd$ converges to the absolute value function. This implies that
\begin{eqnarray*}
\forall t \leq t', \qquad 
\int_{-1}^1 \, dx\, \Big| \gr^1 (t',x)   - \gr^2 (t',x)   \Big| 
\leq  \int_{-1}^1 \, dx\, \Big| \gr^1 (t,x)  - \gr^2 (t,x) \Big| \, ,
\end{eqnarray*}
from which the uniqueness of the weak solutions follows. \qed


\begin{thebibliography}{00}

\bibitem[BDGJL1]{BDGJL1}
L. Bertini, A. De Sole, D. Gabrielli, G. Jona Lasinio, C. Landim,
{\em  Large deviations for the boundary driven symmetric simple
exclusion process}, Math. Phys. Anal. Geom. {\bf 6}, 231--267
(2003).

\bibitem[BDGJL2]{BDGJL2}
L. Bertini, A. De Sole, D. Gabrielli, G. Jona Lasinio, C. Landim,
{\em Large deviations of the empirical current in interacting
particle systems}, Theory Probab. Appl. {\bf 51}, 2--27 (2007).

\bibitem[BDGJL3]{BDGJL3}
L. Bertini, A. De Sole, D. Gabrielli, G. Jona Lasinio, C. Landim,
{\em Stochastic interacting particle systems out of equilibrium}, J.
Stat. Mech. P07014 (2007).


\bibitem[BLM]{BLM}
L. Bertini, C. Landim, M. Mourragui, {\em Dynamical large deviations
for the boundary driven weakly asymmetric exclusion process},
Ann. Probab. {\bf 37}, no. 6, 2357--2403  (2009).


\bibitem[B]{bertin}
E. Bertin, {\em An exactly solvable dissipative transport model}, J. Phys. A: Math. Gen. {\bf 39}, 1539 (2006).

\bibitem[BD]{BD}
T. Bodineau, B. Derrida,
{\em Cumulants and large deviations of the current through non-equilibrium steady states}, C.R. Physique {\bf 8}, 540--555 (2007).


\bibitem[BL]{BL}
T. Bodineau, M. Lagouge,
{\em Current large deviations in a driven dissipative model},  J. Stat. Phys. {\bf 139}, no 2, 201-219 (2010).


\bibitem[DFL]{DFL}
A. De Masi, P. Ferrari, J. Lebowitz, {\em Rigorous Derivation of
Reaction-Diffusion Equations with Fluctuations}, Phys. Rev. Let.,
55:1947-1949, (1985); {\em Reaction-Diffusion Equations for
Interacting Particle Systems}, J. Stat. Phys., {\bf 44}, 589-644, (1986).



\bibitem[D]{derrida}
B. Derrida, {\it Non-equilibrium steady states: fluctuations and
large deviations of the density and of the current}, J. Stat. Mech.
P07023 (2007).


\bibitem[DV]{DV}
M.D. Donsker, S.R.S. Varadhan, {\em Large deviations from a
hydrodynamic scaling limit}, Comm. Pure Appl. Math. {\bf 42}, no. 3,
243--270 (1989).

\bibitem[DZ]{DZ}
A. Dembo, O. Zeitouni, {\em Large deviations techniques and applications}, Stochastic Modelling and Applied Probability, {\bf 38} Springer (2010).

\bibitem[EK]{EK}
S. Ethier, T. Kurtz, 
{\em Markov processes. Characterization and convergence},  Wiley Series in Probability and Mathematical Statistics, John Wiley \& Sons (1986). 


\bibitem[E]{evans}
L. Evans, 
{\em Partial differential equations}, Graduate Studies in Mathematics, {\bf 19} American Mathematical Society, Providence (1998).


\bibitem[FLM]{FLM}
J.S. Farfan Vargas, C. Landim, M. Mourragui, {\em Hydrodynamic behavior of boundary driven exclusion processes in dimension $d>1$},
arXiv:0903.5526 (2009).



\bibitem[G]{gallavotti}
G. Gallavotti, {\em Fluctuation relation, fluctuation theorem,
thermostats and entropy creation in non equilibrium statistical Physics}, 
Comptes Rendus de l'Acad\'emie de Sciences - Physique,
(CR-Physique), {\bf 8}, 486-494, (2007).



\bibitem[JLV]{JLV}
G. Jona-Lasinio, C. Landim, M. E. Vares, {\em Large deviations for a
reaction diffusion model}, Probab. Theory Related Fields  {\bf 97},
no. 3, 339--361 (1993).



\bibitem[KL]{KL} C. Kipnis, C. Landim,
{\em Scaling limits of interacting particle systems}, Springer
(1999).

\bibitem[KOV]{KOV}
C. Kipnis, S. Olla, S.R.S. Varadhan, {\em Hydrodynamics and large
deviation for simple exclusion processes},
 Comm. Pure Appl. Math. {\bf 42}, no. 2, 115--137  (1989).


\bibitem[LMS]{LMS}
C. Landim, M. Mourragui, S. Sellami, 
{\em Hydrodynamic limit for a nongradient interacting particle system with stochastic reservoirs}, Theory Probab. Appl. {\bf 45}, no. 4, 604--623  (2002).


\bibitem[LL]{levine1}
D. Levanony, D. Levine, {\em Correlation and response in a driven
dissipative model}, Phys. Rev. E {\bf 73}, 055102(R) (2006).



\bibitem[QRV]{QRV}
J. Quastel, F. Rezakhanlou, S.R.S. Varadhan, 
{\em Large deviations for the symmetric simple exclusion process in dimensions $d\geq 3$},  
Probab. Theory Related Fields  {\bf 113},  no. 1, 1--84 (1999). 

\bibitem[S]{spohn} H. Spohn,
{\em Large scale dynamics of interacting particles}, Springer (1991).






\bibitem[SL]{levine2}
Y. Shokef, D. Levine, {\em Energy distribution and effective
temperatures in a driven dissipative model}, Phys. Rev. E {\bf 74},
051111 (2006).



\end{thebibliography}
\end{document}